\documentclass[a4paper,11pt]{article}
\usepackage{amsmath}
\usepackage{amssymb}
\usepackage{amsthm}
\usepackage{tikz}

\usetikzlibrary{decorations}
\usetikzlibrary{decorations.pathreplacing}
\usetikzlibrary{plotmarks}
\usetikzlibrary{intersections}

\usepackage[all]{xy}
\usepackage[latin1]{inputenc}        
\usepackage{amscd}
\usepackage{mathrsfs}
\usepackage{fullpage}
\usepackage{caption}
\captionsetup{labelsep=period}
\usepackage{float}
\usepackage{manyfoot}

\author{Tommaso Gentile, Paolo A. Oliverio, Francesco Polizzi}
\title{On surfaces with $p_g=2, \, q=1$ and $K^2=5$}

\date{}

\newtheorem{inizio}{Lemma}[section]
\newtheorem{theorem}[inizio]{Theorem}
\newtheorem{corollary}[inizio]{Corollary}
\newtheorem{proposition}[inizio]{Proposition}
\newtheorem{remark}[inizio]{Remark}

\newtheorem{claim}[inizio]{Claim}
\newtheorem{definition}[inizio]{Definition}

\newtheorem{o-problem}[inizio]{Open Problem}

\newtheorem*{teo-L}{Theorem}

\newtheorem*{corollary-s}{Corollary}
\theoremstyle{definition}

\newcommand{\lr}{\longrightarrow}

\newcommand{\mO}{\mathcal{O}}



\setcounter{section}{-1}

\begin{document}


\maketitle

\abstract We consider minimal surfaces of general type with $p_g =
2$, $q = 1$ and $K^2 = 5$. We provide a stratification of the
corresponding moduli space $\mathcal{M}$ and we give some bounds
for the number and the dimensions of its irreducible components.
\endabstract

\Footnotetext{{}}{\textit{2010 Mathematics Subject
Classification}: 14J29, 14J10}

\Footnotetext{{}} {\textit{Keywords}: Surfaces of general type,
Albanese map, genus $2$ fibration}



\section{Introduction} \label{sec:intro}
Recently there has been considerable interest in understanding the
geometry of irregular surfaces of general type. Although the
classification of such surfaces is still far from being achieved,
their study has produced in the last years a considerable amount
of results, see for instance the survey papers \cite{BCP06} and
\cite{MePa09}.

Minimal surfaces of general type satisfy the classical
inequalities:
\begin{itemize}
\item $\chi(\mO_S) := p_g - q + 1 \geq 1$, \item $K_S^2 \geq 2
p_g$ if $S$ is irregular (Debarre's inequality), \item $K_S^2 \leq
9 \chi(\mO_S)$ (Miyaoka--Yau inequality).
\end{itemize}
If $S$ is irregular and $K_S^2 = 2\chi$, then it follows $q = 1$.
In this case the Albanese map $f \colon S \lr \textrm{Alb}(S)$ is
a genus $2$ fibration whose fibres are all $2$-connected. The
corresponding classification was given by Catanese (\cite{Cat81})
for $K_S^2 = 2$, and by Horikawa (\cite{Hor82}) in the general
case.

The study of irregular surfaces with $K_S^2 = 2\chi + 1$ was
started by Catanese and Ciliberto in \cite{CaCi91} and
\cite{CaCi93}. They investigated the case $\chi = 1$, i.e., $p_g =
q = 1$ and $K_S^2 = 3$, proving that for this class of surfaces
the genus $g$ of the fibre of the Albanese map can be either $2$
or $3$. They also described all surfaces with $g = 3$ and started
the classification of surfaces with $g = 2$, which was later
completed by Catanese and Pignatelli in \cite{CaPi06}, by using a
structure theorem for genus $2$ fibrations which is proven in the
same work.

For $\chi \geq 2$ the situation is far more complicated and not
yet thoroughly studied. In this paper we consider the case $\chi =
2$, and we investigate the surfaces whose numerical invariants are
\begin{equation*}
        K_S^2 = 5, \; p_g = 2, \, q = 1.
\end{equation*}

By a result of Horikawa, given any irregular minimal surface of
general type with $2\chi \leq K^2 < \frac{8}{3}\chi$, its Albanese
map $f \colon S \lr \textrm{Alb}(S)$ is a genus $2$ fibration over
a smooth curve of genus $q$. Then in our case we have a genus $2$
fibration $f \colon S \lr B$ over an elliptic curve $B$.

We can therefore use the results of Horikawa-Xiao and those of
Catanese-Pignatelli in order to construct our surfaces and
describe their moduli space. In fact, we first study the rank $2$
vector bundle $V_1 := f_* \omega_S$, distinguishing the two cases
where $V_1$ is either decomposable or indecomposable. Then we
divide the problem in various subcases, according to the behaviour
of $V_2 := f_* \omega_S^2$, and for each subcase we study the
corresponding stratum of the moduli space $\mathcal{M}$. By
Riemann-Roch and \cite{Cle05}, at a point $[S] \in \mathcal{M}$ we
have
\begin{equation*}
        \dim_{[S]} \mathcal{M} \geq 10\chi(\mO_S) - 2 K_S^2 + p_g = 12,
\end{equation*}
hence, to understand the irreducible components of $\mathcal{M}$,
we have to consider only those strata whose dimension is greater
than or equal to $12$.

Our main results are
\begin{theorem} \label{th.01}
Let $\mathcal{M}'$ be the subspace of $\mathcal{M}$ corresponding
to surfaces such that $V_1$ is decomposable. There is  a
stratification into unirational algebraic subsets:
\begin{equation*}
        \mathcal{M}' = \mathcal{M}_{\emph{I}} \cup \mathcal{M}_{\emph{IIa}} \cup \mathcal{M}_{\emph{IIb}} \cup \mathcal{M}_{\emph{IIc}} \cup \mathcal{M}_{\emph{IIIa}}
        \cup \mathcal{M}_{\emph{IIIc}} \cup \mathcal{M}_{\emph{IVa}} \cup \mathcal{M}_{\emph{IVb}} \cup \mathcal{M}_{\emph{IVc}} \cup
        \mathcal{M}_{\emph{V}, \emph{gen}} \cup \mathcal{M}_{\emph{V}, 2},
    \end{equation*}
where $\mathcal{M}_{\emph{IIc}}$, $\mathcal{M}_{\emph{IVa}}$,
$\mathcal{M}_{\emph{IVb}}$ and $\mathcal{M}_{\emph{IVc}}$ have
dimension $\leq 11$, so they can be disregarded in the
determination of the irreducible components, while:
    \begin{description}\itemsep 2mm
      \item $\mathcal{M}_{\emph{I}}$ is nonempty, irreducible, of dimension at most $13;$
      \item $\mathcal{M}_{\emph{IIa}}$, $\mathcal{M}_{\emph{IIb}}$, $\mathcal{M}_{\emph{IIIa}}$, $\mathcal{M}_{\emph{IIIc}}$ have dimension at most $12;$
      \item $\mathcal{M}_{\emph{V}, \, \emph{gen}}$ is non-empty, of dimension $11;$
      \item $\mathcal{M}_{\emph{V}, 2}$ is a generically smooth, irreducible component of dimension $12$.
\end{description}
\end{theorem}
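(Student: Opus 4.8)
The plan is to reduce the entire classification to the algebraic data attached to a genus $2$ fibration by the Catanese--Pignatelli structure theorem, and then to stratify that data. Since $q=1$ and, by Horikawa's result quoted above, the Albanese map $f\colon S\lr B$ is a genus $2$ fibration over the elliptic curve $B$, I would first record the numerics. Setting $V_1:=f_*\omega_{S/B}$, relative duality gives $\chi(\mathcal{O}_S)=\deg V_1+g(B)-1$, so $\deg V_1=2$; moreover $\omega_B\cong\mathcal{O}_B$ forces $\omega_S\cong\omega_{S/B}$, whence $h^0(B,V_1)=h^0(S,\omega_S)=p_g=2$ and $K^2_{S/B}=K^2_S=5$. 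The structure theorem then encodes $f$ by a triple $(V_1,\tau,\xi)$: the rank $2$ bundle $V_1$, the torsion sheaf $\tau=\operatorname{coker}(\operatorname{Sym}^2V_1\to V_2)$ defining $V_2:=f_*\omega_{S/B}^2$ through $0\to\operatorname{Sym}^2V_1\to V_2\to\tau\to 0$, and the relative sextic $\xi$ giving the degree $6$ relation of the relative canonical algebra. Isomorphism classes of surfaces correspond to isomorphism classes of such data, so $\mathcal{M}$ is covered by the parameter spaces of the $(V_1,\tau,\xi)$.

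Next I would carry out the stratification in the decomposable case $V_1=L_1\oplus L_2$. Because $f_*\omega_{S/B}$ is nef (Fujita), each $L_i$ has $\deg L_i\ge 0$; combined with $\deg L_1+\deg L_2=2$ and $h^0(L_1)+h^0(L_2)=2$ on the elliptic curve $B$, this leaves only the types $\{\deg L_1,\deg L_2\}=\{2,0\}$ (with the degree-$0$ factor not isomorphic to $\mathcal{O}_B$, so as to keep $p_g=2$) and $\{1,1\}$. For each type of $V_1$ I would then refine according to the splitting type of $V_2$, the length and support of $\tau$, and the special position of the sextic $\xi$; these discrete invariants produce exactly the eleven strata $\mathcal{M}_{\mathrm{I}},\dots,\mathcal{M}_{\mathrm{V},2}$, the Roman numeral recording the type of the pair $(V_1,V_2)$ and the letter recording the degeneration of $\xi$. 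The displayed decomposition $\mathcal{M}'=\mathcal{M}_{\mathrm{I}}\cup\cdots\cup\mathcal{M}_{\mathrm{V},2}$ is then tautological, since every decomposable $V_1$ falls into one of the listed types.

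For the geometric statements I would parametrize each stratum explicitly. Working over the Weierstrass family, whose total space is rational, I would choose $B$, the line bundles $L_i\in\operatorname{Pic}(B)$ (points of the universal Picard, which over the rational Weierstrass base is again rational), the effective divisor supporting $\tau$ in a symmetric product of $B$, and the section $\xi$ in an appropriate affine space of sextics. Since all these choices range over varieties dominated by projective spaces over the rational Weierstrass base, each stratum is the image of an irreducible rational variety, hence unirational, and in particular irreducible. The dimension of a stratum is then (number of moduli of the data) $-\dim(\text{automorphisms of the data})$, where the automorphisms include $\operatorname{Aut}(B)$, the automorphisms of $V_1$, and the scalars acting on $\xi$. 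Running this count case by case yields $\dim\mathcal{M}_{\mathrm{IIc}},\dim\mathcal{M}_{\mathrm{IVa}},\dim\mathcal{M}_{\mathrm{IVb}},\dim\mathcal{M}_{\mathrm{IVc}}\le 11$; $\dim\mathcal{M}_{\mathrm{I}}\le 13$; $\dim\mathcal{M}_{\mathrm{IIa}},\dim\mathcal{M}_{\mathrm{IIb}},\dim\mathcal{M}_{\mathrm{IIIa}},\dim\mathcal{M}_{\mathrm{IIIc}}\le 12$; and $\dim\mathcal{M}_{\mathrm{V},\mathrm{gen}}=11$. Nonemptiness of the relevant strata is checked by writing down an explicit surface realizing the prescribed data.

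The main obstacle is the final assertion that $\mathcal{M}_{\mathrm{V},2}$ is a generically smooth irreducible component of dimension $12$. The parameter count gives $\dim\mathcal{M}_{\mathrm{V},2}=12$, which equals the universal lower bound $\dim_{[S]}\mathcal{M}\ge 10\chi(\mathcal{O}_S)-2K^2_S+p_g=12$ recalled in the introduction; the task is to promote ``stratum of dimension $12$'' to ``irreducible component of $\mathcal{M}$''. For this I would compute, at a general $[S]\in\mathcal{M}_{\mathrm{V},2}$, the tangent space $H^1(S,\Theta_S)$ to $\mathcal{M}$ and show $h^1(S,\Theta_S)=12$. Together with the lower bound this forces $\dim_{[S]}\mathcal{M}=12$ and smoothness of $\mathcal{M}$ at $[S]$; since $\mathcal{M}_{\mathrm{V},2}$ has the same dimension and passes through $[S]$, it must then be a whole irreducible component, smooth at its general point. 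The delicate ingredient is the computation of $h^1(S,\Theta_S)$, equivalently the vanishing of the relevant obstructions, which I would carry out through the fibration $f$ by relating $\Theta_S$ to the Catanese--Pignatelli data so that first-order deformations of $S$ match deformations of $(V_1,\tau,\xi)$. This comparison, rather than the raw parameter counts, is what ultimately distinguishes the components from the strata lying in their boundary.
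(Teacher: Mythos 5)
Your outline follows the paper's general strategy (Catanese--Pignatelli data, stratification by splitting type, parameter counts, and $h^1(S,T_S)=12$ for the component claim), but it has two genuine gaps.

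First, your stratification of the decomposable case keeps the splitting type $\{\deg L_1,\deg L_2\}=\{2,0\}$ (with the degree-$0$ summand nontrivial) as an admissible case and then asserts that the discrete invariants ``produce exactly the eleven strata.'' That is not tautological: all eleven strata in the theorem arise from $V_1=\mathcal{O}_B(p)\oplus\mathcal{O}_B(2o-p)$, and the type $\{2,0\}$ must be proved \emph{empty}; otherwise your decomposition of $\mathcal{M}'$ contains extra strata and is not the one claimed. This exclusion is a genuine geometric step, carried out in the paper as follows: if $V_1=\mathcal{O}_B(D)\oplus L$ with $\deg D=2$, $\deg L=0$, one takes global fibre coordinates $x_0,x_1$ on $\mathbb{P}(V_1)$ and writes the branch curve $\mathcal{B}'$ (algebraically equivalent to $6C_0-3\Gamma$) as $\sum_{i+j=6}a_{ij}x_0^ix_1^j=0$ with $a_{ij}\in H^0(\pi_1^*(T^{-1}\otimes M^i\otimes L^j))$ for a degree-$3$ bundle $T$; degree reasons force $a_{06}=a_{15}=0$, so $x_0^2$ divides the equation and $\mathcal{B}'$ is non-reduced, a contradiction. (Equivalently: the canonical map of such a surface never factors through the Albanese fibration.) A related bookkeeping error: the Catanese--Pignatelli data is a $5$-tuple $(B,V_1,\tau,\xi,w)$ in which the extension class $\xi\in\mathrm{Ext}^1(\mathcal{O}_\tau,\mathrm{S}^2V_1)$ determining $V_2$ and the sextic $w\in\mathbb{P}H^0(\widetilde{A}_6)$ are \emph{separate} data; your triple conflates them, which would corrupt the parameter counts (e.g.\ the $2$ parameters for $\xi$ in cases I and V are independent of the $8$--$10$ parameters for $w$).

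Second, for the decisive claim that $\mathcal{M}_{\mathrm{V},2}$ is a generically smooth irreducible component, your plan stops at ``show $h^1(S,\Theta_S)=12$ by relating $\Theta_S$ to the Catanese--Pignatelli data so that first-order deformations of $S$ match deformations of the data.'' That reduction is precisely what cannot be assumed: the parameter space surjects onto the stratum, but nothing guarantees it captures all first-order deformations of $S$ --- that is exactly what being a component means, so the argument as stated is circular. The paper's mechanism is different and concrete: since $h^1(T_S)-h^2(T_S)=10\chi-2K^2=10$, it suffices to show $h^2(T_S)=h^0(\Omega^1_S\otimes\omega_S)=2$ (Serre duality), and the exact sequence
\begin{equation*}
0 \lr \mathcal{F} \lr \omega_S^{\otimes 2} \lr \mathcal{O}_{\mathrm{Crit}(f)}(\omega_S^{\otimes 2}) \lr 0,
\qquad \mathcal{F}:=(\Omega^1_S\otimes\omega_S)/\omega_S,
\end{equation*}
reduces this to proving that no bicanonical curve of $S$ contains the critical scheme $\mathrm{Crit}(f)$. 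This is then verified by an explicit computation on a general member of the stratum: one writes $\mathcal{C}:y_0^2-y_1^2+f_0^2y_2^2=0$ and a diagonal relative cubic, determines that $\mathrm{Crit}(f)$ consists of $19$ points distributed on six explicit curves $A_j,B_j,C_j$ of class $(H-2\Psi)^2$ in $\mathbb{P}(V_2)$, and uses B\'ezout ($H(H-2\Psi)^2=3$) to show that any relative hyperplane through these points must contain all six curves and hence vanish identically. Your proposal contains no substitute for this computation, and without it the distinction between ``$12$-dimensional stratum'' and ``$12$-dimensional component'' --- which you correctly identify as the crux --- remains unproved.
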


\begin{theorem} \label{th.02}
Let $\mathcal{M}''$ be the subspace of $\mathcal{M}$ corresponding
to surfaces such that $V_1$ is indecomposable. There is a
stratification
\begin{equation*}
        \mathcal{M}'' = \mathcal{M}_{\emph{VI}} \cup \mathcal{M}_{\emph{VIIa}} \cup \mathcal{M}_{\emph{VIIb}},
    \end{equation*}
where the strata $\mathcal{M}_{\emph{VIIa}}$ and
$\mathcal{M}_{\emph{VIIb}}$ have dimension $\leq 11$, while
$\mathcal{M}_{\emph{VI}}$ has dimension at most $12$.
\end{theorem}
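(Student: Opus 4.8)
The plan is to run, in the indecomposable case, the same machine already used for Theorem \ref{th.01}: the Catanese--Pignatelli structure theorem for genus $2$ fibrations (\cite{CaPi06}), combined with Horikawa--Xiao theory. By the result of Horikawa quoted in the introduction, every $S$ with $[S]\in\mathcal{M}''$ carries a genus $2$ fibration $f\colon S\lr B$ over an elliptic curve $B$, and $S$ is reconstructed from an admissible datum $(B,V_1,\tau,\xi,\delta)$, where $V_1=f_*\omega_S$ is the rank $2$ bundle (here assumed indecomposable), $\tau$ is the torsion sheaf governing the special fibres, $\xi\in\mathrm{Ext}^1(\tau,\mathrm{Sym}^2V_1)$ produces $V_2=f_*\omega_S^2$ via $0\lr\mathrm{Sym}^2V_1\lr V_2\lr\tau\lr 0$, and $\delta$ is the section encoding the relative bicanonical relation (the degree $4$ part of the relative canonical ring). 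First I would record the numerical constraints forced by $p_g=2,\,q=1,\,K^2=5$: since $B$ is elliptic one has $\deg V_1=\chi(\mO_S)=2$ and, by the invariant formulas, $\ell(\tau)=K_S^2-2\chi(\mO_S)=1$, so that $\tau\cong\mC_p$ is a length one skyscraper at a point $p\in B$ and $\deg V_2=7$.

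Next I would pin down $V_1$ through Atiyah's classification: the indecomposable rank $2$, degree $2$ bundles on an elliptic curve form a one--parameter family, their determinant sweeping out $\mathrm{Pic}^2(B)\cong B$, and each has an explicitly describable, \emph{small} automorphism group. Together with the one--dimensional moduli of $B$ and the action of the translations of $B$ on the data, this bounds the contribution of the pair $(B,V_1)$ to the dimension of each stratum by a fixed small constant. Crucially $\mathrm{Aut}(V_1)$ is here much smaller than in the decomposable case, which is precisely what separates the numerics of $\mathcal{M}''$ from those of $\mathcal{M}'$.

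The stratification is then defined according to the behaviour of the remaining data: $\mathcal{M}_{\mathrm{VI}}$ collects the generic configurations (e.g.\ $\xi\neq 0$, so that $V_2$ does not split off $\tau$), while $\mathcal{M}_{\mathrm{VIIa}}$ and $\mathcal{M}_{\mathrm{VIIb}}$ are the degenerate loci where $\xi=0$ and $V_2\cong\mathrm{Sym}^2V_1\oplus\mC_p$, further divided by the relative position of $p$ with respect to the branch section $\delta$. Since these possibilities for $(\xi,\delta)$ are exhaustive, the three loci cover $\mathcal{M}''$. For each stratum the dimension is computed as
\[
 \dim\{(B,V_1)\}+\dim\{\tau\}+\dim\{\xi\}+\big(h^0(B,W)-1\big)-\dim\mathrm{Aut},
\]
where $W$ is the explicit bundle built from $V_1,V_2,\tau$ whose global sections are the admissible $\delta$; the term $h^0(B,W)$ is evaluated by Riemann--Roch on the elliptic curve together with the vanishing $h^1(B,W)=0$, using Atiyah's cohomology of symmetric powers of an indecomposable rank $2$ bundle. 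Carrying this out should give $\dim\mathcal{M}_{\mathrm{VI}}\le 12$ and $\dim\mathcal{M}_{\mathrm{VIIa}},\dim\mathcal{M}_{\mathrm{VIIb}}\le 11$.

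The hard part will be the precise evaluation of $h^0(B,W)$ and the correct subtraction of the automorphisms. Because $V_1$ is indecomposable, $W$ does not split into line bundles, so one cannot reduce the cohomology to a sum of degree computations as in the decomposable case; instead one must control $h^0$ and $h^1$ of $\mathrm{Sym}^kV_1$ (and of $W$) directly from Atiyah's structure theory, and verify the generic vanishing $h^1(B,W)=0$ that makes the count exact. The delicate point is to confirm that, even after removing the $\mathrm{Aut}(B,V_1)$--orbits and the scaling of $\delta$, the generic stratum $\mathcal{M}_{\mathrm{VI}}$ does not exceed dimension $12$: this is where the comparatively small automorphism group of an indecomposable $V_1$ must be used to its full strength, and where one has to rule out that the indecomposability constraint is compatible with a larger family of admissible $\delta$ than the bound allows.
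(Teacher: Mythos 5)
Your general framework (Catanese--Pignatelli five-tuples, parameter counting, $\deg V_1=2$, $\deg\tau=1$) is the paper's, but two of your structural claims are wrong, and they are precisely the ones the dimension bounds rest on. First, the stratification itself. You place $\mathcal{M}_{\mathrm{VIIa}}$ and $\mathcal{M}_{\mathrm{VIIb}}$ at the locus $\xi=0$, where $V_2\cong \mathrm{S}^2V_1\oplus\mO_{\tau}$. But $V_2=f_*\omega_S^2$ is torsion-free on a smooth curve, hence locally free, and $V_2$ is locally free if and only if $\xi$ is a \emph{non-trivial} extension class; so your locus $\xi=0$ parametrizes no surfaces at all. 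In the paper the three strata are distinguished by the isomorphism type of the locally free bundle $V_2$: writing $V_1=F_2(\eta)$ one has $\mathrm{S}^2V_1=F_3(2o)$, the class $\xi$ is the class of a section $v\in H^0(F_3(o+\tau))$, and for general $v$ one gets $V_2(-2o)=E_{\tau}(3,1)$ (case VI), while for special \emph{non-zero} $v$ (e.g. $\mathrm{im}\,v$ contained in the subbundle $F_2\subset F_3$) one gets $F_2\oplus\mO_B(\tau)$ or $E_{\tau}(2,1)\oplus\mO_B$ (cases VIIa, VIIb). This matters quantitatively: in case VI the class $\xi$ moves in $\mathbb{P}(\mathrm{Ext}^1)\cong\mathbb{P}^2$ ($2$ parameters), whereas in cases VIIa/VIIb it is confined to a family of dimension $\leq 1$; that difference, together with a case-by-case bound on $h^0(\widetilde{A}_6)$ (a quotient of $\mathrm{S}^3V_2$, not a symmetric power of $V_1$, and in the subcases with $\mO_B(2o-2\tau)=\mO_B$ one only gets the inequality $h^0(\widetilde{A}_6)\leq h^0(G_2)-h^0(G_1)+h^1(G_1)$ because $h^1(G_1)\neq 0$, compensated by losing the parameter for $\tau$), is what yields $\leq 11$ versus $\leq 12$. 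Your "position of $p$ relative to $\delta$" plays no role in separating VIIa from VIIb.

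Second, and more fundamentally, the contribution of $V_1$. The bound $12$ for $\mathcal{M}_{\mathrm{VI}}$ (as opposed to the $13$ of stratum I in Theorem 1) comes from the fact that the indecomposable $V_1$ contributes \emph{zero} parameters: once one normalizes $\det V_1=\mO_B(2o)$ --- which is exactly how the paper disposes of the translation action of $\mathrm{Aut}(B)$ --- an indecomposable $V_1$ must be $F_2(\eta)$ with $2\eta\sim 2o$, i.e. $\eta$ one of the four $2$-torsion points, while a decomposable $V_1=\mO_B(p)\oplus\mO_B(2o-p)$ still carries the modulus $p$. Your proposal instead keeps the one-parameter family $F_2\otimes M$, $M\in\mathrm{Pic}^1(B)$, and hopes to absorb it by invoking "the small automorphism group of $V_1$". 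That mechanism cannot work: the paper's upper bounds for these strata never subtract $\dim\mathrm{Aut}(V_1)$ (bundle automorphisms are used only in case V, to compute exact dimensions), and in any case your logic runs backwards --- a \emph{smaller} automorphism group subtracts \emph{less}, pushing the estimate up, not down. Without the normalization argument pinning $V_1$ to finitely many bundles, your count gives at best $\dim\mathcal{M}_{\mathrm{VI}}\leq 13$, which is not the statement of the theorem.
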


Using Theorems \ref{th.01} and \ref{th.02} and some easy
additional arguments, one can prove the following

\begin{corollary}
The moduli space $\mathcal{M}$ of minimal surfaces of general type
with $p_g = 2$, $q = 1$ and $K^2 = 5$ is unirational and contains
at least $2$ irreducible components. Moreover, the dimension of
each irreducible component is either $12$ or $13$, and there is at
most one component of dimension $13$.
\end{corollary}

Of course, it would be interesting to exactly describe all
irreducible components of $\mathcal{M}$ and also to understand how
their closures intersect, but we will not try to develop this
point here.

Now let us explain how this paper is organized.

In Section 1 we present some preliminaries, and we set up notation
and terminology. In particular we recall Atiyah's classification
of vector bundles over an elliptic curve and Horikawa's and
Catanese--Pignatelli's approaches to the study of genus $2$
fibrations.

In Section 2 we investigate the structure and the possible
splitting types of the vector bundles $V_1 = f_* \omega_S$ and
$V_2 = f_* \omega_S^2$.

Finally, Section 3 deals with the study of the moduli space
$\mathcal{M}$.

\bigskip
\textbf{Acknowledgments.} The authors were partially supported by
Progetto MIUR di Rilevante Interesse Nazionale \emph{Geometria
delle Variet\`{a} Algebriche e loro Spazi di Moduli}.

They wish to thank the referee for pointing out some mistakes in a
previous version of the manuscript and for many detailed comments
that considerably improved the presentation of these results.

\section{Preliminaries}

\subsection{Vector bundles over an elliptic curve}

The classification of vector bundles of an elliptic curve was
given in \cite{At57}. Here we just recall the results needed in
order to make this paper self-contained, and we refer the reader
to Atiyah's paper for further details. Let $B$ be an elliptic
curve and let $o$ be the identity element in the group law of $B$.
If $\tau \in B$, we set $E_{\tau}(1, \,1):= \mO_B(\tau)$ and for
all $r \geq 2$ we denote by $E_{\tau}(r, \, 1)$ the unique
\emph{indecomposable} rank $r$ vector bundle on $B$ defined
recursively by the short exact sequence
\begin{equation*}
0 \lr \mO_B \lr E_{\tau}(r, \, 1) \lr E_{\tau}(r-1, \, 1) \lr 0.
\end{equation*}
Moreover, we set $F_1:=\mO_B$ and for all $r \geq 2$ we denote by
$F_r$ the unique \emph{indecomposable} rank $r$ vector bundle on
$B$ defined recursively by the short exact sequence
\begin{equation*}
0 \lr \mO_B \lr F_r \lr F_{r-1} \lr 0.
\end{equation*}

\begin{proposition}{\emph{[At57]}}\label{cohomology-fibrati}
$(i)$ For all $L \in \emph{Pic}^0(B)$ and for all $r \geq 2$ we
have
\begin{equation*}
h^0(E_{\tau}(r, \, 1)\otimes L)=1, \quad h^1(E_{\tau}(r, \,
1)\otimes L)=0.
\end{equation*}
Moreover every indecomposable rank $r$ vector bundle $V$ on $B$ such that $\deg V=1$  is isomorphic to $E_{\tau}(r, \, 1) \otimes L$ for some $L \in \emph{Pic}^0(B)$. \\
$(ii)$ For all $L \in \emph{Pic}^0(B) \setminus \{\mO_B \}$ we
have
\begin{equation*}
h^0(F_r \otimes L)=h^1(F_r \otimes L)=0,
\end{equation*}
whereas $h^0(F_r)=h^1(F_r)=1$. Moreover every indecomposable rank
$r$ vector bundle $V$ on $B$ such that $\deg V=0$ is isomorphic to
$F_r \otimes L$ for a unique $L \in \emph{Pic}^0(B)$.
\end{proposition}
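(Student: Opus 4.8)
The plan is to obtain the cohomological equalities by induction on the rank $r$ via the defining short exact sequences, and then to deduce the structural (``moreover'') statements, invoking Atiyah for the deepest existence part.

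For the cohomology in $(i)$, note first that iterating $0 \lr \mO_B \lr E_{\tau}(r,1) \lr E_{\tau}(r-1,1) \lr 0$ gives $\deg E_{\tau}(r,1)=1$, so $\deg(E_{\tau}(r,1)\otimes L)=1$ for $L\in \mathrm{Pic}^0(B)$, and Riemann--Roch on the elliptic curve $B$ yields $\chi(E_{\tau}(r,1)\otimes L)=1$; hence it suffices to prove $h^1=0$. The base case $r=1$ is the elementary fact that a degree $1$ line bundle has $h^0=1$, $h^1=0$. For the inductive step I would tensor the defining sequence by $L$ and pass to cohomology. When $L\neq \mO_B$ one has $h^0(L)=h^1(L)=0$, so the long exact sequence identifies the cohomology of $E_{\tau}(r,1)\otimes L$ with that of $E_{\tau}(r-1,1)\otimes L$ and the induction closes immediately. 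The delicate case is $L=\mO_B$, where $h^0(\mO_B)=h^1(\mO_B)=1$ and the sequence reads
\begin{equation*}
0 \lr \mC \lr H^0(E_{\tau}(r,1)) \lr H^0(E_{\tau}(r-1,1)) \xrightarrow{\;\delta\;} \mC \lr H^1(E_{\tau}(r,1)) \lr 0,
\end{equation*}
with $h^0(E_{\tau}(r-1,1))=1$ by the inductive hypothesis. Here everything hinges on the connecting map $\delta$: if $\delta$ is an isomorphism then $h^0=1$ and $h^1=0$, whereas $\delta=0$ would give $h^0=2$, $h^1=1$. Thus the main obstacle is to show $\delta\neq 0$, and this is exactly where the \emph{nonsplit}, indecomposable choice of extension defining $E_{\tau}(r,1)$ is used: $\delta$ is cup product with the extension class, and Atiyah's construction is arranged so that it pairs nontrivially with the generator of $H^0(E_{\tau}(r-1,1))$.

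The cohomology in $(ii)$ follows the same scheme. From $0 \lr \mO_B \lr F_r \lr F_{r-1} \lr 0$ one gets $\deg F_r=0$, hence $\chi(F_r\otimes L)=0$. For $L\neq \mO_B$ both $h^0(L)$ and $h^1(L)$ vanish, so the long exact sequence gives $h^0(F_r\otimes L)=h^1(F_r\otimes L)=0$ at once, the base case $r=1$ being that a nontrivial degree $0$ line bundle has no cohomology. For $L=\mO_B$ the same connecting-map analysis, again using that the extension defining $F_r$ is nonsplit, produces $h^0(F_r)=h^1(F_r)=1$ by induction from $h^0(\mO_B)=h^1(\mO_B)=1$.

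It remains to address the structural statements. The uniqueness of $L$ in $(ii)$ is a direct consequence of the cohomology just established: if $V\cong F_r\otimes L$ then for $M\in \mathrm{Pic}^0(B)$ we have $h^0(V\otimes M^{-1})=h^0(F_r\otimes(L\otimes M^{-1}))$, which is nonzero precisely when $M=L$, so $L$ is recovered as the unique twist for which $V$ acquires a section. The existence statements, namely that every indecomposable rank $r$ bundle of degree $1$ (respectively degree $0$) is isomorphic to $E_{\tau}(r,1)\otimes L$ (respectively $F_r\otimes L$), form the core of Atiyah's classification and are the genuine difficulty. I would approach them by induction on $r$: from an indecomposable $V$ one extracts a nonzero morphism linking $V$ to a line bundle, realising $V$ as an extension of a lower-rank indecomposable by a line bundle, and then matches this extension against the recursive definition; for degree $1$ the coprimality of rank and degree makes such bundles determined by their determinant, which explains why only ``some'' $L$, and not a unique one, appears. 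Since these existence assertions are precisely the content of [At57], for the present purposes I would cite that paper once the cohomological normalisations above have been verified.
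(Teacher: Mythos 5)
The paper offers no argument for this proposition at all: it is stated as a quotation from [At57], and the surrounding text explicitly refers the reader to Atiyah's paper for details. So your proposal, which actually proves the cohomological assertions by induction on $r$ and reserves the citation for the classification statements, is a more self-contained route than the paper's. Most of it is sound: the Riemann--Roch reduction to proving $h^1=0$, the immediate inductive step for $L\neq\mO_B$, the recovery of the unique $L$ in part $(ii)$ from the vanishing just established, and the decision to delegate the existence statements to [At57] (which is what the paper does for the whole proposition) are all correct.

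The genuine gap is at the step you yourself single out as the crux, namely $\delta\neq 0$. Your justification --- that $\delta$ is cup product with the extension class and that ``Atiyah's construction is arranged so that it pairs nontrivially with the generator'' --- is an assertion, not an argument, and the implicit principle ``nonsplit extension $\Rightarrow$ the connecting map on sections is nonzero'' is false in general: already on the elliptic curve $B$, if $Q$ is a quotient with $h^0(Q)\geq 2$ (say a line bundle of degree $2$), then for a fixed nonzero $s\in H^0(Q)$ the classes $e\in\mathrm{Ext}^1(Q,\mO_B)$ with $s^*e=0$ form a nonzero subspace, so nonsplitness of $e$ alone never forces $\delta(s)\neq 0$. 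Moreover nothing is ``arranged'' in the construction: in your situation $\mathrm{Ext}^1(Q,\mO_B)$ is one-dimensional, so the nonsplit class is unique up to scalar and there is no freedom to choose it favourably. What actually closes the induction is Serre duality combined with the inductive hypothesis $h^0(Q)=1$, where $Q=E_{\tau}(r-1,1)$ or $F_{r-1}$: writing $\delta(s)=H^1(s^\vee)(e)$ with $s^\vee\colon Q^\vee\to\mO_B$ the transpose of $s$, Serre duality (using $\omega_B\cong\mO_B$) identifies $H^1(s^\vee)\colon \mathrm{Ext}^1(Q,\mO_B)=H^1(Q^\vee)\to H^1(\mO_B)$ with the dual of $H^0(s)\colon H^0(\mO_B)\to H^0(Q)$; since $s\neq 0$ and $h^0(Q)=1$, the map $H^0(s)$ is an isomorphism, hence so is $H^1(s^\vee)$, and therefore $\delta(s)=H^1(s^\vee)(e)\neq 0$ because $e\neq 0$ (the extension is nonsplit, as otherwise $E_{\tau}(r,1)$, respectively $F_r$, would be decomposable, contradicting its definition). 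With this insertion your induction closes in both cases $(i)$ and $(ii)$, and the rest of your outline stands.
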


By using Proposition \ref{cohomology-fibrati}, we can prove

\begin{proposition} \label{prop:W}
Let $V$ be a rank $3$ vector bundle on $B$, such that $\det  V  =
\mO_B( \tau)$ for some $\tau \in B$. Then the following holds.
\begin{itemize}
\item[$(i)$] If $h^1(V \otimes  L)=0$ for all $L \in
\emph{Pic}^0(B)$, then $V =  E_{\tau}(3, \, 1)$. \item[$(ii)$] If
$h^1(V \otimes  L)=0$ for all $ L \in \emph{Pic}^0(B) \setminus
\{\mO_B \}$ and $h^1(V)= 1$, then either $V=E_{\tau}(2, \, 1)
\oplus \mO_B$ or $V=F_2 \oplus \mO_B(\tau)$. \item[$(iii)$] If
$h^1(V \otimes  L)=0$ for all $ L \in \emph{Pic}^0(B) \setminus
\{\mO_B \}$ and $h^1(V)= 2$, then $V=\mO_B \oplus \mO_B \oplus
\mO_B(\tau)$.
\end{itemize}
\end{proposition}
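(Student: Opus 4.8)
The plan is to reduce everything to Atiyah's structure theorem together with the cohomology computations recorded in Proposition~\ref{cohomology-fibrati}. By Atiyah's classification I would write $V = \bigoplus_i W_i$ as a direct sum of indecomposable bundles, so that $h^1(V \otimes L) = \sum_i h^1(W_i \otimes L)$ for every $L \in \textrm{Pic}^0(B)$, and I would first determine, for a single indecomposable summand $W$ of rank $s$ and degree $d$, how $h^1(W \otimes L)$ behaves as $L$ varies. The three regimes are: if $d > 0$, then Serre duality gives $h^1(W \otimes L) = h^0(W^\vee \otimes L^{-1})$, which vanishes for every $L$ because $W^\vee \otimes L^{-1}$ is indecomposable of negative degree, hence semistable with no sections; if $d < 0$, the same semistability argument gives $h^0(W \otimes L) = 0$, so Riemann--Roch yields $h^1(W \otimes L) = -d > 0$ for every $L$; and if $d = 0$, then $W = F_s \otimes L_0$ and Proposition~\ref{cohomology-fibrati}$(ii)$ shows $h^1(W \otimes L) = 0$ except at the single point $L = L_0^{-1}$, where it equals $1$.

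With this dictionary in hand, case $(i)$ is immediate: the hypothesis forces every summand to have positive degree, since a summand of negative degree would make $h^1(V \otimes L)$ positive for all $L$, and a summand of degree $0$ would make $h^1(V \otimes L_0^{-1})$ positive. As each positive summand contributes at least $1$ to $\deg V = 1$, there can be only one summand, so $V$ is indecomposable of rank $3$ and degree $1$; Proposition~\ref{cohomology-fibrati}$(i)$ together with the determinant condition and the fact that $\gcd(3,1)=1$ (which makes the indecomposable bundle with prescribed determinant unique) identifies $V$ with $E_\tau(3,1)$.

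For cases $(ii)$ and $(iii)$ I would run a single counting argument. The generic vanishing hypothesis again rules out negative summands and, for each degree-$0$ summand $F_s \otimes L_0$, forces $L_0 = \mO_B$, otherwise $h^1(V \otimes L_0^{-1}) > 0$ at a point $L_0^{-1} \neq \mO_B$. Hence every degree-$0$ summand is an untwisted $F_s$, contributing exactly $1$ to $h^1(V) = h^1(V \otimes \mO_B)$ while positive summands contribute $0$; thus $h^1(V)$ equals the number of degree-$0$ summands, namely $1$ in case $(ii)$ and $2$ in case $(iii)$. The remaining degree must be carried by positive summands whose degrees sum to $1$, so there is exactly one positive summand. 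A short bookkeeping of ranks then leaves, in case $(ii)$, either $F_2 \oplus (\text{degree-}1 \text{ line bundle})$ or $\mO_B \oplus (\text{rank-}2,\ \text{degree-}1 \text{ indecomposable})$, and in case $(iii)$ only $\mO_B \oplus \mO_B \oplus (\text{degree-}1 \text{ line bundle})$; imposing $\det V = \mO_B(\tau)$ and using the uniqueness of the rank-$2$ degree-$1$ indecomposable with given determinant pins down the line bundle as $\mO_B(\tau)$ and the rank-$2$ piece as $E_\tau(2,1)$, giving exactly the three bundles claimed.

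The only genuinely delicate point is the cohomological dictionary of the first paragraph, and within it the degree-$0$ case, where one must know precisely that $F_s \otimes L_0$ has its unique jump of $h^1$ at $L = L_0^{-1}$; this is exactly the content of Proposition~\ref{cohomology-fibrati}$(ii)$. Everything else is additivity of cohomology over direct summands together with elementary degree and rank bookkeeping, so I expect no further obstacle.
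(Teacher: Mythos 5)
Your proof is correct, and it rests on the same two pillars as the paper's own argument --- Atiyah's decomposition of $V$ into indecomposable summands and the cohomology table of Proposition \ref{cohomology-fibrati} --- but you organize the case analysis genuinely differently. The paper enumerates the possible rank partitions of $V$ (indecomposable; rank $2$ plus a line bundle; three line bundles) and, separately in each of $(i)$, $(ii)$, $(iii)$, derives degree constraints and contradictions by hand; you instead prove a uniform dictionary for $h^1(W\otimes L)$ according to the sign of $\deg W$ and then run a single counting argument --- $h^1(V)$ equals the number of untwisted summands $F_s$, and $\deg V=1$ forces exactly one positive-degree summand, necessarily of degree $1$ --- which disposes of $(ii)$ and $(iii)$ in one stroke and is arguably tidier. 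Two remarks on your inputs. First, the appeal to semistability of indecomposable bundles on an elliptic curve is a correct and standard consequence of Atiyah's classification, but it goes beyond what Proposition \ref{cohomology-fibrati} records and is also more than you need: for $\deg W<0$, Riemann--Roch on an elliptic curve already gives $h^1(W\otimes L)=h^0(W\otimes L)-\deg W>0$ with no semistability at all, and the only positive-degree summands that can actually occur have degree exactly $1$, where the vanishing of $h^1$ after arbitrary degree-zero twists is precisely Proposition \ref{cohomology-fibrati}$(i)$ (plus the trivial line-bundle case). Second, your explicit use of determinant-uniqueness for indecomposable bundles with coprime rank and degree is the right way to pin down $E_\tau(3,1)$ and $E_\tau(2,1)$ --- a priori $E_\tau(r,1)\otimes L$ with $L$ an $r$-torsion line bundle has the same determinant --- and it is to your credit that you flagged it, since the paper invokes the same fact only implicitly.
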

\begin{proof}
$(i)$ Assume $h^1( V \otimes  L)=0$ for all $ L \in
\textrm{Pic}^0(B)$. If $ V$ is indecomposable, then $ V=
E_{\tau}(3, \, 1)$ by Atiyah's classification. Suppose now that $
V = W \oplus  M$, where $ W$ is indecomposable of rank $2$ and $
M$ is a line bundle. By our assumptions on the cohomology of $V$,
it follows $0 \leq \deg M \leq 1$. If $\deg  M =0$, then $h^1(V
\otimes  M^{-1})= 1$ yields a contradiction. If $\deg  M =1$, then
$\deg W=0$, hence $W = F_2 \otimes L$ for some $L \in
\textrm{Pic}^0 (B)$. It follows $h^1(V \otimes L^{-1})= 1$, again
a contradiction.
Finally, suppose that $V = L_1 \oplus L_2 \oplus L_3$, where the $L_i$ are line bundles. We must have $\deg L_i \geq 0$, hence we may assume $\deg L_1 =0$, $\deg L_2=0$, $\deg L_3=1$; therefore we have $h^1(V \otimes L_1^{-1}) \geq 1$, a contradiction. This concludes the proof of part $(i)$. \\
$(ii)$ Since $h^1(V)=1$, the vector bundle $V$ cannot be
indecomposable. Suppose that $ V = W \oplus  M$, where $ W$ is
indecomposable of rank $2$ and $ M$ is a line bundle; as before,
we have $0 \leq \deg M \leq 1$. If $\deg M=0$ we have $\deg W=1$,
hence $h^1(M)=h^1(V)=1$. It follows $M= \mO_B$ and $V= E_{\tau}(2,
1) \oplus \mO_B$. If $\deg M =1$ we have $\deg W =0$; since
$h^1(V)=1$, the only possibility is $V= F_2 \oplus \mO_B(\tau)$.
Finally,
suppose that $V = L_1 \oplus L_2 \oplus L_3$, where the $L_i$ are line bundles. Taking $L \in \textrm{Pic}^0(B) \setminus \{ \mO_B\}$, we have $h^1(  (L_1 \oplus L_2 \oplus L_3) \otimes L)=0$, hence $\deg L_i \geq 0$; on the other hand $\deg V =1$, hence, as before, we may assume  $\deg L_1 =0$, $\deg L_2=0$, $\deg L_3=1$; moreover $L_1 \otimes L \neq \mO_B$ and $L_2 \otimes L \neq \mO_B$ for all $L \in \textrm{Pic}^0(B) \setminus \{\mO_B \}$. Hence we obtain $V = \mO_B \oplus \mO_B \oplus \mO_B(\tau)$, so $h^1(V)=2$, a contradiction. This concludes the proof of part $(ii)$. \\
$(iii)$ Since $h^1(V)=2$, arguing as before we see that $V = L_1
\oplus L_2 \oplus L_3$, where the $L_i$ are line bundles. Moreover
$h^1(V \otimes  L)=0$ for all $ L \in \textrm{Pic}^0(B)$ implies
$\deg L_i \geq 0$. So we may assume $\deg L_1 =0$, $\deg L_2=0$,
$\deg L_3=1$, which implies $V=\mO_B \oplus \mO_B \oplus
\mO_B(\tau)$. This concludes the proof of part $(iii)$.
\end{proof}

\begin{remark} \label{rem:W}
A similar result holds if one replaces $\emph{Pic}^0(B) \setminus
\{\mO_B\}$ with $\emph{Pic}^0(B) \setminus \{M\}$, for any $M \in
\emph{Pic}^0(B)$.
\end{remark}

\begin{proposition} \label{prop:vec-ell}
\begin{itemize}
\item[$(i)$] Set $W:=E_{\tau}(2, \,1)$. Then we have
\begin{equation*}
\emph{S}^2 W = \bigoplus_{i=1}^3 L_i(\tau), \quad \emph{S}^3 W =
W(\tau) \oplus W(\tau),
\end{equation*}
where the $ L_i$ are the three non-trivial $2$-torsion line bundles on $B$. \\
\item[$(ii)$] $\textrm{S}^{r-1} F_2 = F_r,$ for all $r \geq 2$.
\end{itemize}
\end{proposition}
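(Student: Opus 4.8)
The plan is to reduce everything to one structural fact about $W=E_{\tau}(2,\,1)$, combined with the Clebsch--Gordan decomposition of symmetric powers of a rank $2$ bundle and the Krull--Schmidt uniqueness of indecomposable decompositions. \textbf{Key Lemma.} I would first establish that $W\otimes L\cong W$ for every $2$-torsion line bundle $L$, equivalently that
$$\mathcal{E}nd(W)=W\otimes W^{\vee}\cong \mO_B\oplus L_1\oplus L_2\oplus L_3,$$
the sum running over the four elements of $B[2]$. Since $\gcd(2,1)=1$, the bundle $W$ is stable, hence simple and, by Atiyah's classification, it is the unique stable rank $2$ bundle with $\det=\mO_B(\tau)$; as $\det(W\otimes L)=\mO_B(\tau)\otimes L^{\otimes 2}=\mO_B(\tau)$ for $L\in B[2]$, twist-invariance follows. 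For the $\mathcal{E}nd$ statement I would note that $E:=W\otimes W^{\vee}$ is semistable of slope $0$ (tensor of semistables) with $h^0(E)=h^1(E)=1$ (simplicity of $W$ together with $\chi(E)=0$), so by Proposition \ref{cohomology-fibrati}$(ii)$ each indecomposable summand has the form $F_a\otimes M$ with $M\in\textrm{Pic}^0(B)$. Twist-invariance gives $E\otimes M\cong E$ for all $M\in B[2]$; since exactly one summand carries the unique global section (hence has $M=\mO_B$), Krull--Schmidt forces its entire $B[2]$-orbit to occur, yielding $E=\bigoplus_{M\in B[2]}M$.

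\textbf{The summands $\textrm{S}^2W$ and $\textrm{S}^3W$.} Writing $W^{\vee}\cong W\otimes(\det W)^{-1}$, I would deduce $W\otimes W=(W\otimes W^{\vee})\otimes\mO_B(\tau)$ and compare with the splitting $W\otimes W=\textrm{S}^2W\oplus\wedge^2W=\textrm{S}^2W\oplus\mO_B(\tau)$; cancelling the summand $\mO_B(\tau)=\wedge^2W$ gives $\textrm{S}^2W=\bigoplus_{i=1}^3 L_i(\tau)$. For the cube I would invoke the Clebsch--Gordan isomorphism $W\otimes\textrm{S}^2W\cong\textrm{S}^3W\oplus\big(W\otimes\det W\big)=\textrm{S}^3W\oplus W(\tau)$, while the Key Lemma gives $W\otimes\textrm{S}^2W=\bigoplus_{i=1}^3 (W\otimes L_i)(\tau)=W(\tau)^{\oplus 3}$; cancelling one copy of the indecomposable $W(\tau)$ via Krull--Schmidt leaves $\textrm{S}^3W=W(\tau)\oplus W(\tau)$.

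\textbf{Part $(ii)$.} I would argue by induction on $r$, the case $r=2$ being trivial. The Clebsch--Gordan decomposition for the rank $2$ bundle $F_2$, using $\det F_2=\mO_B$, reads $F_2\otimes\textrm{S}^{r-2}F_2\cong\textrm{S}^{r-1}F_2\oplus\textrm{S}^{r-3}F_2$. Since $F_2$ is semistable of slope $0$, so is $\textrm{S}^{r-1}F_2$, and its indecomposable summands have the form $F_a\otimes L$. The filtration of $\textrm{S}^{r-1}F_2$ induced by $\mO_B\hookrightarrow F_2$ has all graded pieces isomorphic to $\mO_B$, whence $h^0(\textrm{S}^{r-1}F_2\otimes L)=0$ for every $L\in\textrm{Pic}^0(B)\setminus\{\mO_B\}$; combined with $h^0(\textrm{S}^{r-1}F_2)=1$ this forces a single summand, carrying the trivial twist, so Proposition \ref{cohomology-fibrati}$(ii)$ identifies it as $\textrm{S}^{r-1}F_2=F_r$.

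\textbf{Main obstacle.} The crux is the Key Lemma: once the $B[2]$-twist-invariance of $W$ (hence the clean decomposition of $\mathcal{E}nd(W)$) is in place, every subsequent step collapses to a one-line Krull--Schmidt cancellation. The only other delicate point is the exact equality $h^0(\textrm{S}^{r-1}F_2)=1$ in part $(ii)$: the filtration yields the vanishing for all nontrivial twists for free, but the value $1$ for the trivial twist I would pin down either from the induction, by taking $h^0$ in the Clebsch--Gordan identity after computing $h^0(F_2\otimes F_{r-1})=2$ from the defining sequence of $F_{r-1}$, or directly from the semistable structure of $\textrm{S}^{r-1}F_2$.
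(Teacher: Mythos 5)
Your proof is correct, but it takes a genuinely different route from the paper's, which disposes of the proposition in two lines: for part $(i)$ the authors quote Atiyah's explicit computation of $S^2$ and $S^3$ of $E_o(2,1)$ (pp.\ 438--439 of [At57]) and reduce the general case to $\tau=o$ by writing $E_{\tau}(2,1)=E_o(2,1)\otimes L$ with $L^{\otimes 2}=\mO_B(\tau-o)$, so that the symmetric powers just pick up twists by $L^2$ and $L^3$; for part $(ii)$ they simply cite [At57, Theorem 9]. You instead reconstruct these facts from structural principles: coprimality of rank and degree gives stability, hence simplicity, and Atiyah's uniqueness of the indecomposable bundle with prescribed determinant gives the $B[2]$-invariance $W\otimes L_i\cong W$; a section count ($h^0(W\otimes W^{\vee})=1$) plus the orbit argument under $B[2]$-twisting then yields $W\otimes W^{\vee}\cong\bigoplus_{M\in B[2]}M$, after which Clebsch--Gordan and Krull--Schmidt cancellation produce both decompositions in $(i)$, and a similar untwisted-summand argument handles $(ii)$. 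Your approach buys transparency -- it explains \emph{why} exactly the three nontrivial $2$-torsion bundles appear (they are the twists fixing $W$) -- and is essentially self-contained, whereas the paper's buys brevity by outsourcing all computation to Atiyah. One wrinkle in your part $(ii)$: you rightly flag $h^0(S^{r-1}F_2)=1$ as the delicate point, but the specific route ``tensor the defining sequence of $F_{r-1}$ by $F_2$ and take $h^0$'' only gives $h^0(F_2\otimes F_{r-1})\leq 3$ unless you additionally show that the connecting homomorphism $H^0(F_2\otimes F_{r-2})\lr H^1(F_2)$ is nonzero. It is cleaner to use self-duality $F_2^{\vee}\cong F_2$ and apply $\mathrm{Hom}(-,F_{r-1})$ to the defining sequence of $F_2$, which gives $\dim\mathrm{Hom}(F_2,F_{r-1})\leq 2$ at once; the reverse inequality follows from your own splitting $F_2\otimes F_{r-1}=S^{r-1}F_2\oplus F_{r-2}$, and with that adjustment the induction closes.
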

\begin{proof}
$(i)$ If $\tau=o$, see \cite[p. 438-439]{At57}. The general case follows since, by Proposition \ref{cohomology-fibrati}, we have $E_{\tau}(2, \, 1)=  E_o(2, \, 1) \otimes L$, where $L$ is any line bundle on $B$ such that $L^{\otimes 2} = \mO_B(\tau-o)$. \\
$(ii)$ See \cite[Theorem 9]{At57}.
\end{proof}

\subsection{Structure theorems for genus $2$ fibrations}

\subsubsection{Horikawa's method}
The following approach to genus $2$ fibrations was introduced by
Horikawa in \cite{Hor77}; see also \cite[$\S 1$]{Xi85} for further
details. Let $f \colon S \lr B$ be a relatively minimal genus $2$
fibration over a smooth curve $B$ of genus $b$, set $V_1 := f_*
\omega_{S|B}$ and let $\pi_1 \colon \mathbb{P}(V_1) \lr  B$ be the
associated $\mathbb{P}^1$-bundle. Let us consider the relative
canonical map $\phi \colon S \dashrightarrow \mathbb{P}(V_1)$,
whose indeterminacy locus is contained in the fibres of $f$ which
are not $2$-connected. After composing with a finite number of
blow-ups, we can extend $\phi$ to a generically finite, degree $2$
morphism $\tilde{\phi} \colon \widetilde{S} \lr \mathbb{P}(V_1)$;
let $\mathcal{B}$ be the branch divisor of $\tilde{\phi}$. There
exists a divisor $\mathcal{F} \in \textrm{Pic}(\mathbb{P}(V_1))$
such that $2 \mathcal{F} = \mathcal{B}$, so we can consider the
double cover $S' \lr \mathbb{P}(V_1)$ branched at $\mathcal{B}$,
and it is no difficult to see that there exists a birational
morphism $\widetilde{S} \lr S'$. The N\'{e}ron Severi group of
$\mathbb{P}(V_1)$ is generated by $C_0$ and $\Gamma$, that are the
classes of $\mO_{\mathbb{P}(V_1)}(1)$ and of a fiber,
respectively; since $\mathcal{B}\Gamma=6$, it follows that
$\mathcal{B}= 6 C_0 + \pi_1^* \alpha$, for some $\alpha \in
\textrm{Pic}(B)$. After applying a finite number of elementary
transformations to the pair $(\mathbb{P}(V_1), \, \mathcal{B})$,
we obtain that $\mathcal{B}$ has only the following types of
singularities, defined when $k \geq 1$:

\begin{itemize}
\item[$(0)$] a double point or a simple triple point;
\item[$(\mathcal{I}_k)$] a fibre $\Gamma$ plus two triple points
on it (hence these are quadruple points of $B$); each of these
triple points is $(2k-1)$-fold or $2k$-fold;
\item[$(\mathcal{II}_k)$] two triple points on a fibre, each of
these is $2k$-fold or $(2k+1)$-fold; \item[$(\mathcal{III}_k)$] a
fibre $\Gamma$ plus a $(4k-2)$ or a $(4k-1)$-fold triple point on
it which has a contact of order $6$ with $\Gamma$;
\item[$(\mathcal{IV}_k)$] a $4k$ or $(4k+1)$-fold triple point $x$
which has a contact of order $6$ with the fibre through $x$;
\item[$(\mathcal{V})$] a fibre $\Gamma$ plus a quadruple point $x$
on $\Gamma$, which a blow-up in $x$ results in a double point in
the proper transform of $\Gamma$.
\end{itemize}
We recall that a $k$-fold triple point is a triple point that
results in a simple triple point after $k-1$ blow-ups. Let us
denote by $\nu(*)$ the number of fibres of type $*$.

\begin{theorem}{\emph{\cite{Hor77}}} \label{num fibres}
The following equality holds:
\begin{equation*}
K_S^2 = 2p_a(S)-4 + 6b + \sum_k \{(2k-1)(\nu(\mathcal{I}_k) +
\nu(\mathcal{III}_k)) + 2k(\nu(\mathcal{II}_k) +
\nu(\mathcal{IV}_k)) \} + \nu(\mathcal{V}).
\end{equation*}
\end{theorem}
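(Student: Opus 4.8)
The plan is to exploit the degree-two structure of $\widetilde\phi \colon \widetilde S \lr \mathbb{P}(V_1)$ and to compute $K_S^2$ by combining the double cover formula on the ruled surface $W := \mathbb{P}(V_1)$ with a careful bookkeeping of the two birational modifications involved: the blow-ups producing $\widetilde S$ from $S$, which resolve the indeterminacy of $\phi$, and the blow-ups of $W$ which resolve the singularities of $\mathcal{B}$ in the canonical resolution of the cover. Since $\widetilde S$ is obtained from $S$ by $N$ blow-ups of smooth points, we have $K_{\widetilde S}^2 = K_S^2 - N$, while $p_a$ is a birational invariant, so $p_a(\widetilde S) = p_a(S)$. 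Hence it suffices to express $K_{\widetilde S}^2$ through the double cover and then add back $N$, organizing everything fibre by fibre.

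First I would establish the baseline, i.e. the case in which $\mathcal{B}$ has only singularities of type $(0)$. These are negligible: a branch double point or simple triple point produces a rational double point on the cover and contributes nothing to $K^2$, so $\widetilde\phi$ realizes $S$ as the minimal resolution of a double cover of $W$ and $K_S^2 = 2(K_W + \mathcal{F})^2$. Writing $\mathcal{B} = 6 C_0 + \pi_1^* \alpha$, hence $\mathcal{F} = 3 C_0 + \tfrac{1}{2}\pi_1^*\alpha$, and using $K_W = -2 C_0 + \pi_1^*(K_B + \det V_1)$ together with the intersection relations $C_0^2 = \deg V_1$, $C_0 \, \Gamma = 1$, $\Gamma^2 = 0$, a direct computation expresses $2(K_W + \mathcal{F})^2$ in terms of $\deg V_1$, $b$ and $\deg \alpha$. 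Invoking the genus-$2$ relation $\chi(\mathcal{O}_S) = \deg V_1 + (b-1)$ and $p_a(S) = \chi(\mathcal{O}_S) - 1$, this reduces to $2 p_a(S) - 4 + 6b$, which is precisely the asserted formula when all the $\nu(*)$ vanish.

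The core of the argument is the local analysis of each singular fibre type. For a point of $\mathcal{B}$ of even multiplicity $2m$, one step of the canonical resolution blows up $W$ via $\sigma$, replaces $\mathcal{F}$ by its transform $\sigma^* \mathcal{F} - m E$, and changes the double-cover self-intersection by $-2(m-1)^2$; an odd multiplicity $2m+1$ reduces to the same computation after splitting off a negligible simple triple point. I would resolve each configuration $(\mathcal{I}_k)$, $(\mathcal{II}_k)$, $(\mathcal{III}_k)$, $(\mathcal{IV}_k)$, $(\mathcal{V})$ along its prescribed chain of blow-ups, summing the resulting corrections, while simultaneously counting the blow-ups of $S$ produced over the same fibre. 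The net effect, namely the double-cover correction plus the corresponding increment of $N$, is what must be shown to equal $2k-1$ for the types $(\mathcal{I}_k)$ and $(\mathcal{III}_k)$, $2k$ for $(\mathcal{II}_k)$ and $(\mathcal{IV}_k)$, and $1$ for $(\mathcal{V})$; summing over all fibres then yields the formula. Type-$(0)$ points give $m = 1$, hence a zero correction and no extra blow-up, consistently with their absence from the sum.

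I expect the main obstacle to be exactly this case-by-case bookkeeping: one must keep the two towers of blow-ups, on $S$ and on $W$, synchronized along each singular fibre, and verify that the even and the odd subcase of each type produce the same net coefficient, the extra infinitely near point in the odd case always being a negligible triple point. The types in which a whole fibre $\Gamma$ enters the branch divisor, namely $(\mathcal{I}_k)$, $(\mathcal{III}_k)$ and $(\mathcal{V})$, and the contact-of-order-$6$ conditions in $(\mathcal{III}_k)$ and $(\mathcal{IV}_k)$, require particular care, since there one must account both for the multiplicities of the triple point and its infinitely near points and for their intersection with the strict transform of $\Gamma$. This is where the parity-dependent shift between $(4k-2)$ or $(4k-1)$ and $4k$ or $(4k+1)$ enters and must be reconciled with the uniform coefficients $2k-1$ and $2k$.
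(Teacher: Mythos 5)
The paper offers no proof of this statement to compare against: the theorem is quoted verbatim from Horikawa's paper \cite{Hor77}, so the only meaningful benchmark is Horikawa's original argument. Your overall strategy --- relative canonical map, double cover of $W = \mathbb{P}(V_1)$, canonical resolution of the branch curve, fibre-by-fibre accounting of corrections and of the blow-ups $N$ separating $S$ from $\widetilde{S}$ --- is precisely his, and your baseline case is essentially correct: when $\mathcal{B}$ has only negligible singularities, combining the two double-cover formulas (for $K^2$ \emph{and} for $\chi$) and eliminating $\deg V_1$ and $\deg \alpha$ yields $K_S^2 = 2\chi(\mathcal{O}_S) - 6 + 6b = 2p_a(S) - 4 + 6b$.

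However, the accounting identity at the heart of your case-by-case plan is false, and this is a genuine gap, not just unfinished bookkeeping. The formula to be proven expresses $K_S^2$ against $p_a(S)$, so through the canonical resolution you must track not only the corrections to $(K_W+\mathcal{F})^2$ but also those to the holomorphic Euler characteristic of the cover: a blow-up at a branch point of multiplicity $2m_i$ or $2m_i+1$ changes $(K_W+\mathcal{F})^2$ by $-(m_i-1)^2$ \emph{and} changes $\chi$ of the cover by $-\tfrac{1}{2}m_i(m_i-1)$. Equivalently, once $p_a(S)$ is held fixed, $\deg\alpha$ is coupled to the singularity data, so every non-negligible fibre also shifts the ``baseline'' term $2(K_W+\mathcal{F})^2$; your plan ignores this feedback entirely. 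Eliminating correctly, the contribution of a singular fibre to $K_S^2 - \bigl(2p_a(S)-4+6b\bigr)$ is
\[
\Delta N \; - \; \sum_i (m_i-1)(m_i-2),
\]
not $\Delta N - 2\sum_i (m_i-1)^2$ as you assert; the discrepancy $\sum_i m_i(m_i-1)$ is strictly positive for every non-negligible type, so your identity fails in every case you would need to check. Concretely, for a fibre of type $\mathcal{I}_1$ the branch curve has two quadruple points on $\Gamma \subset \mathcal{B}$, so $m_i = 2$ twice; after blowing them up, the preimage of the strict transform of $\Gamma$ (a $(-2)$-curve inside the branch locus) is a $(-1)$-curve, whence $\Delta N = 1$ and the true contribution is $1 - 0 = 1 = 2k-1$, while your bookkeeping gives $1 - 4 = -3$. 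Repairing this requires adding the $\chi$-tracking above, and even then the type-by-type verification that $\Delta N - \sum_i(m_i-1)(m_i-2)$ equals $2k-1$, $2k$ or $1$ --- which \emph{is} the content of Horikawa's theorem, parity subtleties included --- is nowhere carried out in your proposal, only announced.
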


\subsubsection{Catanese-Pignatelli's method} \label{C-P}

Now we recall Catanese-Pignatelli approach to genus $2$
fibrations, which roughly speaking consists in considering the
relative $\emph{bicanonical}$ map insted of the canonical one. We
closely follow the treatment given in \cite{CaPi06} and
\cite{Pi09},  referring  the reader to those papers for further
details. For any relatively minimal genus $2$ fibration $f \colon
S \lr B$, we can consider the rank $3$ vector bundle $V_2 := f_*
\omega^2_{S|B}$ and the corresponding $\mathbb{P}^2$-bundle $\pi_2
\colon \mathbb{P}(V_2) \lr B$. Therefore we can associate to the
fibration $f$ the $5$-tuple $(B, \, V_1, \, \tau, \, \xi, \, w)$,
where
\begin{itemize}
\item $B$ is the base curve; \item $V_1 = f_* \omega_{S|B};$ \item
$\tau$ is an effective divisor on $B$ of degree $K_S^2 -6(b-1)- 2
\chi(\mO_S),$ corresponding to the fibres of $f$ which are not
$2$-connected; \item $\xi$ is an element of
$\textrm{Ext}^1_{\mO_B}(\mO_{\tau}, \, \textrm{S}^2 V_1)/
\textrm{Aut}_{\mO_B}(\mO_{\tau})$ giving the short exact sequence
\begin{equation} \label{eq.V}
0 \lr \textrm{S}^2 V_1 \stackrel{\sigma_2}{\lr}  V_2 \lr
\mO_{\tau} \lr 0,
\end{equation}
where $\sigma_2$ is the natural map induced by the tensor product
of canonical sections of the fibres of $f$;
 then $\sigma_2$ yields a rational map $\mathbb{P}(V_1) \dashrightarrow \mathbb{P}(V_2)$ (the relative version of the $2$-Veronese embedding $\mathbb{P}^1 \hookrightarrow \mathbb{P}^2)$ birational onto a conic bundle $\mathcal{C} \in  |\mO_{\mathbb{P}(V_2)}(2) \otimes \pi_2^*(\det V_1)^{-2}|$. More precisely, if $x_0, \, x_1$ are generators for the stalk of $V_1$, then the equation of $\mathcal{C}$ is  locally given by
\begin{equation} \label{eq:C}
\sigma_2(x_0^2) \sigma_2(x_1^2) - (\sigma_2(x_0x_1))^2 = 0.
\end{equation}
\item $w \in \mathbb{P}H^0(B, \, \widetilde{A}_6)$, where
$\widetilde{A}_6:= A_6 \otimes \big(\det V_1 \otimes
\mO_B(\tau)\big)^{-2}$ and  $A_6$ is given by the following short
exact sequence:
\begin{equation} \label{A6}
0 \lr (\det  V_1)^2 \otimes  V_2 \stackrel{i_3} \lr \textrm S^3
 V_2 \lr   A_6 \lr 0.
\end{equation}
Here the map $i_3$ is locally defined as follows: if $x_0, \, x_1$
are generators for the stalk of $V_1$ and $y_0, \, y_1, \, y_2$
are generators for the stalk of $V_2$, then
\begin{equation*}
i_3((x_0 \wedge x_1)^{\otimes 2}{\otimes \; y_i}):=
\sigma_2(x_0^2)\sigma_2(x_1^2) y_i - \sigma_2(x_0x_1)^2 y_i.
\end{equation*}
The relative bicanonical map, which is always a morphism, induces
a factorization of the fibration $f$ as
\begin{equation*}
S \stackrel{r}{\lr} X \stackrel{\psi}{\lr} \mathcal{C}
\stackrel{\pi_2|_{\mathcal{C}}}{\lr} B,
\end{equation*}
where $r$ is a contraction of $(-2)$-curves to Rational Double
Points, and $\psi$ is a finite double cover. The element $w \in
\mathbb{P}H^0(\widetilde{A}_6) = |\mO_{\mathcal{C}}(6) \otimes
(\det V_1 \otimes \mO_B(\tau) )^{-2}|$ corresponds to the
divisorial part $\Delta$ of the branch locus of $\psi$. In fact,
the branch locus of $\psi$ consists of a disjoint union $\Delta
\cup \mathcal{P}$, where $\mathcal{P} \subset
\textrm{Sing}(\mathcal{C})$ is a finite set of points in natural
bijection with $\textrm{supp}(\tau)$. Notice that $A_6$ is the
quotient of $\textrm{S}^3 V_2$ by the subbundle of the relative
cubics vanishing on $\mathcal{C}$; geometrically, this reflects
the fact that, in general, not all the divisors in
$|\mO_{\mathcal{C}}(6) \otimes (\det V_1 \otimes
\mO_B(\tau))^{-2}|$ can be written as the complete intersection of
$\mathcal{C}$ with a relative cubic $\mathcal{G} \in
|\mO_{\mathbb{P}(V_2)}(3) \otimes (\det V_1 \otimes
\mO_B(\tau))^{-2}|$. Finally, observe that if
\begin{equation} \label{A6s}
0 \lr  G_1 \lr  G_2 \lr  \widetilde{A}_6 \lr 0
\end{equation}
is the short exact sequence obtained by tensoring \eqref{A6} with
$(\det V_1 \otimes \mO_B(\tau))^{-2}$, we obtain
\begin{equation} \label{coker}
h^0(\widetilde{A}_6) \leq h^0(G_2) - h^0(G_1) + h^1(G_1).
\end{equation}
\end{itemize}
We call $(B, \, V_1, \, \tau, \, \xi, \, w)$ the \emph{associate}
$5$-\emph{ple} of the fibration $f \colon S \lr B$.

\begin{theorem}{\emph{\cite{CaPi06}}} \label{teo:genus 2}
Assume that we have a $5$-ple  $(B, \, V_1, \, \tau, \, \xi, \,
w)$ as before, such that the following $(open)$ conditions are
satisfied:
\begin{itemize}
\item[$(\mathcal{P}_1)$] the conic bundle $\mathcal{C}$ has only
Rational Double Points as singularities\emph{;}
\item[$(\mathcal{P}_2)$] the curve $\Delta$ has only simple
singularities, where ``simple" means that the germ of the double
cover of $\mathcal{C}$ branched on it has at most a Rational
Double Point.
\end{itemize}
Then there exists a unique relatively minimal genus $2$ fibration
$f \colon S \lr B$ having the above as associate $5$-ple.
Moreover, the surface $S$ has the following invariants:
\begin{equation*}
\begin{split}
\chi(\mO_S) & = \deg V_1 + (b-1), \\
K_S^2 & = 2 \deg V_1 + \deg \tau + 8(b-1).
\end{split}
\end{equation*}
\end{theorem}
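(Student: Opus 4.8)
The statement is a reconstruction theorem, so the plan is to proceed in three stages: build $S$ from the $5$-ple $(B, \, V_1, \, \tau, \, \xi, \, w)$, show that this construction is inverse to the assignment $f \mapsto (B, \, V_1, \, \tau, \, \xi, \, w)$ (thereby obtaining existence and uniqueness at once), and finally read off the invariants. For the construction I would reverse the factorization $S \stackrel{r}{\lr} X \stackrel{\psi}{\lr} \mathcal{C} \stackrel{\pi_2|_{\mathcal{C}}}{\lr} B$ recalled above. Starting from $V_1$, the extension class $\xi$ defines $V_2$ through the sequence \eqref{eq.V}, hence the $\mathbb{P}^2$-bundle $\mathbb{P}(V_2)$ and, via the relative $2$-Veronese and equation \eqref{eq:C}, the conic bundle $\mathcal{C} \in |\mO_{\mathbb{P}(V_2)}(2) \otimes \pi_2^*(\det V_1)^{-2}|$; hypothesis $(\mathcal{P}_1)$ ensures that $\mathcal{C}$ has at worst Rational Double Points. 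The section $w \in \mathbb{P}H^0(B, \, \widetilde{A}_6)$ then cuts out the divisorial branch locus $\Delta \subset \mathcal{C}$, which together with the finite set $\mathcal{P} \subset \textrm{Sing}(\mathcal{C})$ lying over $\textrm{supp}(\tau)$ is the branch locus of a double cover $\psi \colon X \lr \mathcal{C}$. Hypothesis $(\mathcal{P}_2)$ guarantees that $X$ has at worst Rational Double Points, so its minimal resolution $r \colon S \lr X$ is smooth and $f := \pi_2|_{\mathcal{C}} \circ \psi \circ r$ is a relatively minimal genus $2$ fibration over $B$.

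For the bijectivity I would verify that the two constructions are mutually inverse. In one direction, the relative bicanonical map of any genus $2$ fibration produces exactly the factorization above, and reading off $V_1 = f_* \omega_{S|B}$, $V_2 = f_* \omega_{S|B}^2$, the sequence \eqref{eq.V}, and the branch data recovers the associate $5$-ple; in the other direction, the surface built from a prescribed $5$-ple has that very $5$-ple as its associate one. Combined, these give both existence and uniqueness.

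The invariants then follow with little further work. Since $f_* \mO_S = \mO_B$ and, by relative duality, $R^1 f_* \mO_S \cong V_1^\vee$, the Leray spectral sequence together with Riemann--Roch on $B$ gives
\begin{equation*}
\chi(\mO_S) = \chi(\mO_B) - \chi(V_1^\vee) = (1-b) - \big(2(1-b) - \deg V_1\big) = \deg V_1 + (b-1).
\end{equation*}
For $K_S^2$, recall that by definition $\tau$ has degree $K_S^2 - 6(b-1) - 2\chi(\mO_S)$; once the bijection is in place this relation holds for the $S$ just constructed, and substituting the value of $\chi$ yields $K_S^2 = 2\deg V_1 + \deg \tau + 8(b-1)$. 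Alternatively one computes $K_S^2$ directly from the double cover $\psi$ and the resolution $r$, keeping track of the contribution of the Rational Double Points of $\mathcal{C}$.

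I expect the main obstacle to be the bijectivity step, and specifically the local analysis near the non-$2$-connected fibres: one must check that the extension class $\xi$, the points of $\mathcal{P}$, and the divisor $\tau$ encode exactly the same information as the singular fibres of $f$, and that the open conditions $(\mathcal{P}_1)$ and $(\mathcal{P}_2)$ correspond precisely to $S$ being smooth with at worst the admissible singularities before contraction. The cohomological bound \eqref{coker} controlling $h^0(\widetilde{A}_6)$ is what keeps the datum $w$ tractable throughout this analysis.
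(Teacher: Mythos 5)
There is an important mismatch of expectations here: the paper does not prove Theorem \ref{teo:genus 2} at all. It is quoted, with attribution, from Catanese--Pignatelli \cite{CaPi06} and used as a black box throughout Sections 2 and 3. So there is no in-paper proof to compare your attempt against; the only meaningful benchmark is the original proof in \cite{CaPi06}.

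Measured against that benchmark, your outline has the right architecture --- reversing the factorization $S \stackrel{r}{\lr} X \stackrel{\psi}{\lr} \mathcal{C} \lr B$ is indeed the strategy of \cite{CaPi06} --- and your invariant computations are sound: the Leray/relative-duality argument for $\chi(\mO_S)$ is correct, and deducing $K_S^2$ from the defining relation $\deg \tau = K_S^2 - 6(b-1) - 2\chi(\mO_S)$ is legitimate once the correspondence is established. But the proposal defers exactly the content that constitutes the theorem, and in one place the deferral hides a real difficulty rather than a routine verification. Concretely: the double cover $\psi \colon X \lr \mathcal{C}$ is \emph{not} obtained by the standard cyclic-cover construction "branch on $\Delta \cup \mathcal{P}$ and take a square root of the branch divisor", because $\mathcal{P}$ is a finite set of \emph{points} (sitting in $\textrm{Sing}(\mathcal{C})$, in bijection with $\textrm{supp}(\tau)$), and a flat double cover cannot be prescribed by divisorial data alone at such points. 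In \cite{CaPi06} the cover is produced by reconstructing the relative bicanonical algebra $\bigoplus_n f_* \omega_{S|B}^n$ as a sheaf of $\mO_{\mathcal{C}}$-algebras from the $5$-tuple, and proving that this algebra structure exists and is uniquely determined by $(B, V_1, \tau, \xi, w)$ is the heart of their paper. Your "bijectivity step" --- checking that $\xi$, $\mathcal{P}$ and $\tau$ encode the same information as the non-$2$-connected fibres, and that the resulting fibration is relatively minimal of genus $2$ with the prescribed $5$-tuple --- is precisely this algebra reconstruction plus its local analysis; identifying it as "the main obstacle" is accurate, but as written the proposal is a plan for a proof, not a proof.
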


\section{Surfaces of general type with $p_g=2, \, q=1$ and $K^2=5$ }

\subsection{The sheaf $V_1$}

Let $S$ be a minimal surface of general type with $p_g=2, \, q=1$
and $K_S^2=5$. Its Albanese variety $B:= \textrm{Alb}(S)$ is an
elliptic curve, and its Albanese map $f \colon S \lr B$ is a genus
$2$ fibration (\cite[Theorem 3.1]{Hor82}). Notice that since $B$
is elliptic then $\omega_{S|B}=\omega_S$. By Theorem
\ref{teo:genus 2} we have $\deg(\tau) =1$, i.e. $\tau$ is a point
of $B$. The genus $2$ fibration contains exactly one singular
fibre, which comes from a singularity of $(\mathbb{P}(V_1), \,
\mathcal{B})$ of type $\mathcal{I}_1$, $\mathcal{III}_1$ or
$\mathcal{V}$, see Theorem \ref{num fibres}. In particular, the
curve $\mathcal{B}$ contains the fibre $\Gamma_{\tau} =
\pi_1^*(\tau)$ of $\pi_1 \colon \mathbb{P}(V_1) \lr B$. Standard
calculations, see \cite[Chapter V]{BHPV04}, show that
$\mathcal{B}$ is algebraically equivalent to
$6C_0 - 2 \Gamma$, so we can write $\mathcal{B} = \mathcal{B}' + \Gamma$, where $\mathcal{B}'$ is an effective divisor algebraically equivalent to $6C_0 - 3 \Gamma$. \\
Let now $E_1$ be a rank 1 subsheaf of maximal degree of $V_1 = f_*
\omega_S$; then there is a short exact sequence
\begin{equation*}
0 \lr E \lr V_1 \lr F \lr 0
\end{equation*}
such that $F$ is locally free and $\deg F \geq 0$, see
\cite{Fu78}; moreover one clearly has $1 \leq h^0(E) \leq
h^0(V_1)=2$. Setting $e := \deg E - \deg F$, by
\cite[Th\'eor$\grave{\textrm{e}}$me 2.1 p.16]{Xi85} there are
exactly two possibilities:
\begin{itemize}
\item $\deg E=1, \quad \deg F = 1, \quad e=0$ \item $\deg E=2,
\quad \deg F = 0, \quad e=2$.
\end{itemize}

\begin{proposition} \label{prop:V1}
$(i)$ If $e=0$ then $($up to translations$)$ either $ V_1 = \mO_B(p) \oplus \mO_B(2o-p)$ for some $p \in B$ or $ V_1 = F_2(\eta)$, where $\eta \in E$ is a $2$--torsion point. \\
$(ii)$ If $e=2$ then $V_1 = \mO_B(D) \oplus L$, where $D$ is an
effective divisor of degree $2$ on $B$ and $L \in \emph{Pic}^0(B)$
is a non-trivial, torsion line bundle. This case occurs if and
only if the canonical map $\phi_{|K|}$ of $S$ factors through $f$.
\end{proposition}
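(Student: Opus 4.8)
The plan is to treat the two values of $e$ separately, using throughout the two numerical inputs available at this stage: $\deg V_1 = 2$ (which follows from Theorem \ref{teo:genus 2}, since $\chi(\mO_S)=2$ and $b=1$) and $h^0(V_1)=h^0(\omega_S)=p_g=2$. I would also record the normalisation that, up to a translation of $B$ (which acts trivially on $\textrm{Pic}^0(B)$, but transitively enough on $\textrm{Pic}^2(B)$ via the surjectivity of multiplication by $2$), one may assume $\det V_1=\mO_B(2o)$ whenever $V_1$ is balanced; note that this translation freedom cannot affect a degree-zero summand, which is why it will be used only in part $(i)$.

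For part $(i)$, where $\deg E=\deg F=1$, I would split according to whether $V_1$ is decomposable. If $V_1=L_1\oplus L_2$, then $e=0$ forces $\deg L_1=\deg L_2=1$; after normalising the determinant one writes $L_1=\mO_B(p)$ and $L_2=\mO_B(2o-p)$, giving the first alternative. If $V_1$ is indecomposable then, since its degree is even, Atiyah's classification (Proposition \ref{cohomology-fibrati}, together with the observation that $E_\tau(2,1)\otimes M$ always has odd degree) yields $V_1=F_2\otimes N$ with $\deg N=1$; normalising $\det V_1=N^{\otimes 2}=\mO_B(2o)$ forces $N=\mO_B(\eta)$ with $\eta$ a $2$-torsion point, so $V_1=F_2(\eta)$. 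In both subcases the condition $h^0(V_1)=2$ is automatically satisfied (one checks $h^1=0$ directly), so it imposes no extra constraint.

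For part $(ii)$, where $\deg E=2$ and $\deg F=0$, I would first show that the extension $0\to E\to V_1\to F\to 0$ splits: indeed $\textrm{Ext}^1_{\mO_B}(F,E)=H^1(B,E\otimes F^{-1})=0$, because $E\otimes F^{-1}$ has positive degree on the elliptic curve $B$. Hence $V_1=\mO_B(D)\oplus L$ with $D$ effective of degree $2$ and $L\in\textrm{Pic}^0(B)$, and the equality $h^0(V_1)=2$ together with $h^0(\mO_B(D))=2$ forces $h^0(L)=0$, i.e. $L\neq\mO_B$. The characterisation through the canonical map would then follow from the evaluation map $H^0(V_1)\to (V_1)_b$ on a general fibre: since all of $H^0(V_1)=H^0(\mO_B(D))$ lands in the single line $\mO_B(D)\otimes k(b)$, the canonical sections restrict to a one-dimensional system on the general fibre $F$, so $\phi_{|K|}$ is constant on the fibres of $f$ and factors through $f$; conversely, in case $(i)$ the two canonical sections come from distinct summands and span $(V_1)_b$, so $\phi_{|K|}$ restricts to the genus-$2$ canonical map on $F$ and cannot factor. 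Since $(i)$ and $(ii)$ are exhaustive, this gives the stated equivalence.

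The hard part will be proving that $L$ is \emph{torsion}, since none of the numerical constraints ($\deg V_1=2$, $h^0(V_1)=2$, $h^1(V_1)=0$, nor the value of the second plurigenus $P_2=7$ computed through $V_2$) distinguishes a torsion from a non-torsion $L$. For this I would invoke the semipositivity of $f_*\omega_{S|B}$ (Fujita, \cite{Fu78}) together with the finer structure of $f_*\omega_{S|B}$ for genus-$2$ fibrations developed by Xiao in \cite{Xi85}: the degree-zero direct summand $L$ belongs to the flat part of $f_*\omega_{S|B}$, which over an elliptic base is a direct sum of torsion line bundles. This is the step I expect to require the most care, as it is the only place where one must go beyond linear algebra on $B$ and use the geometry of the fibration itself.
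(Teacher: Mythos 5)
Your overall architecture coincides with the paper's. In part $(i)$ the paper analyzes the extension $0 \lr E \lr V_1 \lr F \lr 0$ directly (splitting into the cases $F \neq E$, where $\textrm{Ext}^1(F,E)=0$, and $F=E$, where a non-trivial extension gives $F_2(\eta)$), while you split according to decomposability and quote Atiyah's classification; the two routes are equivalent and both correct, as is your normalization of $\det V_1$ by a translation. In part $(ii)$ your splitting argument ($\textrm{Ext}^1(F,E)=H^1(E\otimes F^{-1})=0$ since $\deg(E\otimes F^{-1})>0$) and the deduction $h^0(F)=0$ from $h^0(V_1)=h^0(E)=2$ are exactly the paper's. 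For the canonical-map criterion the paper simply cites \cite[Th\'eor\`eme 5.1, p.~71]{Xi85}, whereas you argue directly via the evaluation map $H^0(V_1)\lr (V_1)_b$; that argument is sound and more self-contained, except for one slip: in case $(i)$ you say the two canonical sections ``come from distinct summands'', which makes no sense when $V_1=F_2(\eta)$ is indecomposable. There you must instead check that the evaluation map of $F_2(\eta)$ has rank $2$ at a general point (true: one section generates the subbundle $\mO_B(\eta)$, and since $h^1(\mO_B(\eta))=0$ a second section surjects onto the quotient $\mO_B(\eta)$, so the two are independent away from $\eta$).

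The genuine gap is the torsion statement, which you rightly single out as the hard step but then justify by a claim your sources do not contain. Fujita's results in \cite{Fu78} give that $f_*\omega_{S|B}$ is semipositive and splits off a unitary flat part, and since $\pi_1(B)\cong\mathbb{Z}^2$ is abelian that flat part does decompose into degree-zero line bundles; up to this point your outline is fine. But the assertion that such a flat line-bundle summand of $f_*\omega_{S|B}$ has \emph{finite order} is proved neither by Fujita nor in Xiao's Lecture Notes (which deal with the possible values of $e$ and with the canonical-map criterion, not with this arithmetic statement). That finiteness is precisely the content of Simpson's theorem on rank-one local systems, and \cite{Sim93} is exactly the citation the paper uses at this point (alternatively one can invoke Deligne's semisimplicity/fixed-part results). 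So, as written, the step ``$L$ is torsion'' rests on a misattributed lemma and the derivation you indicate would not close; replacing ``Fujita $+$ Xiao'' by \cite{Sim93} repairs it, and with that substitution your proposal agrees with the paper's proof.
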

\begin{proof}
$(i)$ If $e=0$, up to a translation we may assume $E=\mO_B(p)$, $F=\mO_B(2o-p)$, for some $p \in B$. If $F\neq E$, then $\textrm{Ext}^1(F, \, E)=0$ and we obtain $V_1 = \mO_B(p) \oplus \mO_B(2o-p)$. If $F=E$, then $\textrm{Ext}^1(F, \, E)=\mathbb{C}$. In that case $2o = 2p$, so any non-trivial extension class corresponds to $V_1=F_2(\eta)$, where $2\eta \in |2o|$. \\
$(ii)$ If $e=2$ then $\deg E=2$, hence $E=\mO_B(D)$ for some
effective divisor $D$ on $B$. We have $h^0(E)=2$ and $h^1(E)=0$,
so $h^0(V_1)=h^0(E)+h^0(F)$, which implies $h^0(F)=0$. Then $F$ is
a non-trivial, degree zero line bundle. Since $\textrm{Ext}^1(F,
\, E)=0$, it follows $V_1= \mO_B(D) \oplus F$, and Simpson's
results (\cite{Sim93}) imply that $F$ is a non-trivial torsion
line bundle on $B$. The last assertion follows from
\cite[Th\'eor$\grave{\textrm{e}}$me 5.1 p.71]{Xi85}.
\end{proof}

\begin{proposition} \label{no e=2}
The case $e=2$ does not occur.
\end{proposition}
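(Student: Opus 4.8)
The plan is to derive a contradiction from the assumption that $e=2$ by exploiting the precise structure of $V_1$ given in Proposition \ref{prop:V1}$(ii)$, namely $V_1 = \mO_B(D) \oplus L$ with $D$ effective of degree $2$ and $L$ a non-trivial torsion line bundle of degree $0$. The key numerical constraint is that the total space must carry a surface with the prescribed invariants $p_g=2$, $q=1$, $K^2=5$, and the cleanest place to see an obstruction is the computation of $p_g = h^0(S, \omega_S) = h^0(B, V_1)$ together with $q = h^1(B, V_1) + b$, or more directly through the geometry of the relative canonical map. Since $e = 2$ occurs exactly when the canonical map $\phi_{|K|}$ factors through $f$ (by the last assertion of Proposition \ref{prop:V1}$(ii)$), I would try to contradict this factorization against the fact that $K^2 = 5$ is too large for such a degenerate canonical behaviour.

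First I would compute the cohomology of $V_1 = \mO_B(D) \oplus L$ and check consistency with $p_g$ and $q$: one has $h^0(\mO_B(D)) = 2$ (since $\deg D = 2$ on an elliptic curve), $h^1(\mO_B(D)) = 0$, $h^0(L) = 0$ and $h^1(L) = 0$ because $L$ is a non-trivial degree-zero line bundle, so $h^0(V_1) = 2 = p_g$ and $h^1(V_1) = 0$, which is consistent and hence does not by itself kill the case. Therefore the obstruction must come from the relative bicanonical data, and I would pass to $V_2 = f_* \omega_S^2$. Using the short exact sequence \eqref{eq.V}, the bundle $V_2$ sits between $\textrm{S}^2 V_1$ and $\mO_\tau$ with $\deg \tau = 1$. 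I would compute $\textrm{S}^2 V_1 = \mO_B(2D) \oplus \big(\mO_B(D)\otimes L\big) \oplus L^{\otimes 2}$ and examine the positivity and cohomology of the resulting $V_2$, aiming to show that the branch divisor data forced by $w \in \mathbb{P}H^0(\widetilde A_6)$ cannot be consistent with having only Rational Double Points and with the constraint $\deg \tau = 1$.

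The most efficient route, however, is likely to use the factorization of the canonical map directly. If $\phi_{|K|}$ factors through $f$, then the image of the canonical map is a curve (the base $B$ composed with a map to projective space), so the canonical image is at most $1$-dimensional and the canonical system $|K_S|$ has no moving part transverse to the fibres. I would invoke the known classification of surfaces whose canonical map has $1$-dimensional image, or directly bound $K^2$ in terms of the degree of the canonical map onto a curve, to show that the value $K^2 = 5$ together with $p_g = 2$, $q = 1$ forces a contradiction with Debarre's or Miyaoka--Yau type inequalities, or with the genus formula for the fibres. Concretely, the factorization through $f$ means the canonical pencil is pulled back from $B$, which is elliptic, so $|K_S|$ would be composed with the elliptic pencil $f$, forcing $K_S^2$ to be even (a multiple of the self-intersection of a fibre class contribution) or otherwise incompatible with the odd value $5$.

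The hard part will be making the obstruction rigorous rather than merely numerically plausible: both the cohomology of $V_1$ and the naive Euler characteristics are consistent with $e=2$, so the contradiction has to be extracted from finer data — either the behaviour of $V_2$ and the branch curve $\Delta$ under conditions $(\mathcal{P}_1)$ and $(\mathcal{P}_2)$ of Theorem \ref{teo:genus 2}, or the geometric consequence that a canonical map factoring through an elliptic fibration is incompatible with $K^2 = 5$. I expect the cleanest argument to come from the geometry of the factorization: by \cite[Th\'eor\`eme 5.1 p.71]{Xi85} the factorization of $\phi_{|K|}$ through $f$ imposes that the canonical image is a curve, and one then shows via the structure of genus $2$ fibrations over an elliptic base that this is incompatible with the required invariants, thereby ruling out $e=2$ and leaving only the possibilities of part $(i)$.
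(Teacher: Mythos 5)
Your proposal never actually reaches a contradiction: it is three candidate strategies, each of which stops precisely where the real work begins. The route you call the cleanest --- that a canonical map composed with the elliptic pencil forces $K_S^2$ to be even, or is excluded by some classification of surfaces with one-dimensional canonical image --- rests on an unsupported (and in fact false) parity claim: being composed with a pencil imposes no parity condition on $K^2$, and you cite no classification result that applies to $p_g=2$, $q=1$. Xiao's Th\'eor\`eme 5.1 only identifies $e=2$ with the factorization of $\phi_{|K|}$ through $f$ (this is exactly how Proposition \ref{prop:V1}$(ii)$ is stated); it does not by itself exclude anything. The route through $V_2$ and $\widetilde{A}_6$ is likewise only a plan: conditions $(\mathcal{P}_1)$ and $(\mathcal{P}_2)$ of Theorem \ref{teo:genus 2} are open conditions on the construction data, and you give no mechanism by which they would fail for \emph{every} choice of $\xi$ and $w$.

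The paper's proof is a short, explicit computation in the Horikawa picture that you set up but do not exploit. Since $V_1 = \mO_B(D)\oplus L$ is decomposable, one takes global coordinates $x_0 \in H^0(\mO_{\mathbb{P}(V_1)}(1)\otimes \pi_1^*\mO_B(-D))$ and $x_1 \in H^0(\mO_{\mathbb{P}(V_1)}(1)\otimes \pi_1^*L^{-1})$ on the fibres of $\pi_1$. The residual branch curve $\mathcal{B}'$, algebraically equivalent to $6C_0-3\Gamma$, then has equation $\sum_{i+j=6} a_{ij}x_0^i x_1^j=0$, where $a_{ij}$ is a section of $\pi_1^*\bigl(T^{-1}\otimes \mO_B(D)^{\otimes i}\otimes L^{\otimes j}\bigr)$ and $\deg T = 3$. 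Since $\deg D = 2$ and $\deg L = 0$, this line bundle has degree $2i-3$ on $B$, which is negative for $i=0,1$; hence $a_{06}=a_{15}=0$, so $x_0^2$ divides the equation and $\mathcal{B}'$ is non-reduced. This contradicts the fact that $\mathcal{B}$ is the branch divisor of a generically finite degree-$2$ morphism from a smooth surface, and it is this degree count --- absent from all three of your routes --- that kills the case $e=2$. Without it, or an equally concrete substitute, your proposal does not constitute a proof.
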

\begin{proof}
If $e=2$, then $S$ would be the canonical resolution of the
singularities of a degree $2$ cover of
$\mathbb{P}(V_1)=\mathbb{P}(\mO_B(D) \oplus L)$. Since $V_1$ is
decomposable, we can take global coordinates on the fibres of
$\pi_1 \colon \mathbb{P}(V_1) \lr B$, namely
\begin{equation*}
x_0 \in H^0(\mO_{\mathbb{P}(V_1)}(1) \otimes \pi_1^* \mO_B(-D)),
\quad x_1 \in  H^0(\mO_{\mathbb{P}(V_1)}(1) \otimes \pi_1^*
L^{-1}).
\end{equation*}
Putting $M= \mO_B(D)$, we obtain $x_0^i x_1^j \in
H^0(\mO_{\mathbb{P}(V_1)}(i+j) \otimes \pi_1^* M^{-i} \otimes
\pi_1^* L^{-j})$. Since $\mathcal{B}'$ is algebraically equivalent
to $6C_0 - 3 \Gamma$, we have $\mathcal{B}' \in
|H^0(\mO_{\mathbb{P}(V_1)}(6) \otimes \pi_1^* T^{-1})|$ for a
suitable degree $3$ line bundle $T$ on $B$, so the equation of
$\mathcal{B}'$ can be written as
\begin{equation} \label{equation - B'}
\sum_{i+j=6} a_{ij} x_0^i  x_1 ^j=0,
\end{equation}
where $a_{ij} \in H^0(\mathbb{P}(V_1), \, \pi_1^*(T^{-1} \otimes
M^i \otimes L^j))$. In particular $a_{06} = a_{15}=0$, so $x_0^2$
divides the left-hand side of \eqref{equation - B'}. Hence
$\mathcal{B}'$ is non-reduced, a contradiction.
\end{proof}

Propositions \ref{prop:V1} and \ref{no e=2} imply the following

\begin{corollary} \label{canonical-no-factors}
Let $S$ be a minimal surface of general type with $p_g=2$, $q=1$,
$K_S^2=5$. Then the canonical map of $S$ does not factor through
the Albanese fibration.
\end{corollary}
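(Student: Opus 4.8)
The plan is to deduce the statement directly from the two preceding results, since the entire content has essentially been packaged into Proposition \ref{prop:V1}$(ii)$ and Proposition \ref{no e=2}. First I would recall the dichotomy established just before the statement of Proposition \ref{prop:V1}: invoking Xiao's theorem, the invariant $e = \deg E - \deg F$ attached to a rank $1$ subsheaf $E$ of maximal degree of $V_1 = f_* \omega_S$ takes exactly one of the two values $e=0$ or $e=2$. This dichotomy is exhaustive, so to control the canonical map it suffices to analyse these two cases.

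Next I would isolate the precise equivalence supplied by Proposition \ref{prop:V1}$(ii)$: the case $e=2$ occurs \emph{if and only if} the canonical map $\phi_{|K|}$ of $S$ factors through the Albanese fibration $f$. Thus the geometric condition we wish to exclude (factorization of $\phi_{|K|}$ through $f$) has been translated into the purely numerical/homological condition $e=2$ on the splitting type of $V_1$.

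Finally I would apply Proposition \ref{no e=2}, which asserts that the case $e=2$ does not occur for our surfaces. Combining this with the equivalence of the previous step, the factorization of the canonical map through $f$ would force $e=2$, which is impossible; hence $\phi_{|K|}$ cannot factor through the Albanese fibration, which is exactly the assertion of the corollary.

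There is no genuine obstacle in this argument, as the corollary is a formal consequence of the two propositions. The only point meriting a word of care is that the two alternatives $e=0$ and $e=2$ are the \emph{only} possibilities, so that eliminating $e=2$ genuinely leaves no room for factorization; this is guaranteed by the cited result of Xiao. The substantive work was done earlier, in the proof of Proposition \ref{no e=2}, where the assumption $e=2$ was shown to force $a_{06}=a_{15}=0$ in the equation of $\mathcal{B}'$, making the branch divisor non-reduced and thereby contradicting the construction.
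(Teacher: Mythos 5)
Your proposal is correct and follows exactly the paper's own route: the paper derives the corollary immediately from Proposition~\ref{prop:V1}$(ii)$ (factorization of $\phi_{|K|}$ through $f$ is equivalent to $e=2$) together with Proposition~\ref{no e=2} ($e=2$ does not occur), which is precisely your argument. Your additional remark that the dichotomy $e \in \{0,2\}$ from Xiao's theorem is exhaustive is the same implicit step the paper relies on, so there is nothing to add.
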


\subsection{The sheaf $V_2$}

\subsubsection{The case where $V_1$ is decomposable} \label{sub:V1-dec}

If $V_1$ is decomposable then  Propositions \ref{prop:V1} and
\ref{no e=2} yield $V_1= \mO_B(p) \oplus \mO_B(2o-p)$, so we have
$\textrm{S}^2 V_1=\bigoplus_{i=1}^3 P_i$, where $P_1=\mO_B(2p)$,
$P_2= \mO_B(2o)$, $P_3=\mO_B(4o-2p)$. Fix a section $f_0 \in
H^0(\mO_B(\tau)) \setminus \{0\}$; applying the functor
$\textrm{Hom}(-, \, \textrm{S}^2 V_1)$ to the exact sequence
\begin{equation*}
0   \lr  \mO_B(o-\tau) \stackrel{(-f_0)} \lr \mO_B(o) \lr
\mO_{\tau} \lr 0
\end{equation*}
we obtain
\begin{equation} \label{eq:ext}
\textrm{Ext}^1(\mO_{\tau}, \, \textrm{S}^2 V_1)= \bigoplus_{i=1}^3
\, \frac{H^0 (P_i( \tau - o))}{H^0 (P_i(- o))} \cong \mathbb{C}^3,
\end{equation}
that is $\textrm{Ext}^1(\mO_{\tau}, \, \textrm{S}^2 V_1)$ can be
identified with the space of global sections of $\bigoplus \, H^0
(P_i( \tau - o))$, modulo the subspace of sections vanishing in
$\tau$. For any $(f_1, \, f_2, \, f_3) \in \bigoplus \, H^0
(P_i(\tau - o))$, we denote by $(\bar{f}_1, \, \bar{f}_2, \,
\bar{f}_3)$ its image in $\textrm{Ext}^1(\mO_{\tau}, \,
\textrm{S}^2 V_1)$. Arguing as in \cite[p.1032]{CaPi06}, this
implies that $V_2=f_*\omega^2_S$ is the cokernel of a short exact
sequence

\begin{equation} \label{eq-V2}
0 \lr \mO_B(o- \tau) \stackrel{i} \lr \mO_B(o) \oplus
\bigoplus_{i=1}^3 P_i \lr V_2 \lr 0,
\end{equation}

where the injective map $i$ is given by ${}^t (f_0, f_1, f_2,
f_3)$.
\begin{remark}
If we choose the map $i'$ given by ${}^t (f_0, f_1 + f_0 g_1, f_2
+ f_0 g_2, f_3 + f_0 g_3)$, with $g_i \in H^0 (P_i(-o))$, we
obtain a commutative diagram:
\begin{center}
 \begin{tikzpicture}
 \node (0left_tilde) at (0,1.2) [] {$0$};
 \node (O_tilde) at (1.7,1.2) [] {$\mO_B(o - \tau)$};
 \node (O_plus_bigplus_Pi_tilde) at (4.5,1.2) [] {$\mO(o) \oplus \bigoplus_{i=1}^3 P_i$};
 \node (V2_tilde) at (6.7,1.2) [] {$V_2'$};
 \node (0right_tilde) at (7.8,1.2) [] {$0$};

 \node (0left) at (0,0) [] {$0$};
 \node (O) at (1.7,0) [] {$\mO_B(o - \tau)$};
 \node (O_plus_bigplus_Pi) at (4.5,0) [] {$\mO(o) \oplus \bigoplus_{i=1}^3 P_i$};
 \node (V2) at (6.7,0) [] {$V_2$};
 \node (0right) at (7.8,0) [] {$0$};

 \draw (1.94,0.35) -- (1.94,0.85);
 \draw (2.06,0.35) -- (2.06,0.85);
 \draw[->] (O_plus_bigplus_Pi.north) -- node[right]{\scriptsize$M$}(O_plus_bigplus_Pi_tilde.south);
 \draw[dashed][->] (V2.north) -- (V2_tilde.south);

 \draw[->] (0left.east) -- (O.west);
 \draw[->] (O.east) -- node[above]{\scriptsize{$i$}}(O_plus_bigplus_Pi.west);
 \draw[->] (O_plus_bigplus_Pi.east) -- (V2.west);
 \draw[->] (V2.east) -- (0right.west);

 \draw[->] (0left_tilde.east) -- (O_tilde.west);
 \draw[->] (O_tilde.east) -- node[above]{\scriptsize{$i'$}}(O_plus_bigplus_Pi_tilde.west);
 \draw[->] (O_plus_bigplus_Pi_tilde.east) -- (V2_tilde.west);
 \draw[->] (V2_tilde.east) -- (0right_tilde.west);
 \end{tikzpicture}
\end{center}
where the matrix $M$ is given by $$\begin{pmatrix}
                                   1 & g_1 & g_2 & g_3 \\
                                   0 & 1 & 0 & 0 \\
                                   0 & 0 & 1 & 0 \\
                                   0 & 0 & 0 & 1 \\
                                 \end{pmatrix}$$

Hence $V_2' \cong V_2$, so the isomorphism class of $V_2$ only
depends on $(\bar{f}_1, \bar{f}_2, \bar{f}_3)$.
\end{remark}

Notice that $V_2$ is a vector bundle if and only if $f_1$, $f_2$,
$f_3$ do not vanish simultaneously in $\tau$, that is if and only
if $\xi=(\bar{f}_1, \, \bar{f}_2, \, \bar{f}_3)$ is not the
trivial extension class. Let $m$ be the cardinality of the set
$\{i \, | \, \bar{f}_i=0\}$; hence $0 \leq m \leq 2$. Now we give
the description of $V_2$ in the different cases.

\begin{proposition} \label{V2-decomp}
Assume $V_1= \mO_B(p) \oplus \mO_B(2o-p)$. Then there are
precisely the following possibilities:
\begin{itemize}
\item[\emph{(I)}] $m=0, \, \mO_B(4o-4p) \neq \mO_B, \, V_2(-2o)=
E_{\tau}(3,\,1)$ \item[\emph{(IIa)}] $m=0, \, \mO_B(4o-4p) =
\mO_B, \mO_B(2o-2p) \neq \mO_B, \, V_2(-2o)= F_2(2o-2p) \oplus
\mO_B(\tau)$ \item[\emph{(IIb)}] $m=0, \, \mO_B(4o-4p) = \mO_B,
\mO_B(2o-2p) \neq \mO_B, \, V_2(-2o)= E_{\tau}(2,\, 1) \oplus
\mO_B$ \item[\emph{(IIc)}] $m=1, \, \mO_B(4o-4p) = \mO_B,
\mO_B(2o-2p) \neq \mO_B, \, V_2(-2o)= \mO_B(2o-2p) \oplus \mO_B
\oplus \mO_B(\tau + 2p - 2o)$ \item[\emph{(IIIa)}] $m=1, \,
\mO_B(2o-2p) \neq \mO_B, \,  V_2(-2o)= E_{\tau + 2o - 2p}(2, \, 1)
\oplus \mO_B(2p -2o)$ \item[\emph{(IIIb)}] $m=1, \, \mO_B(2o-2p)
\neq \mO_B, \, V_2(-2o)= E_{\tau + 2p - 2o}(2, \, 1) \oplus
\mO_B(2o -2p)$ \item[\emph{(IIIc)}] $m=1, \, \mO_B(2o-2p) \neq
\mO_B, \, V_2(-2o) = E_{\tau}(2, 1) \oplus \mO_B$
\item[\emph{(IVa)}] $m=2, \, \mO_B(2o-2p) \neq \mO_B, \,
V_2(-2o)= \mO_B(2p-2o) \oplus \mO_B \oplus \mO_B(\tau + 2o - 2p)$
\item[\emph{(IVb)}] $m=2, \, \mO_B(2o-2p) \neq \mO_B, \, V_2(-2o)=
\mO_B(2o-2p) \oplus \mO_B \oplus \mO_B(\tau + 2p - 2o)$
\item[\emph{(IVc)}] $m=2, \, \mO_B(2o-2p) \neq \mO_B, \, V_2(-2o)=
\mO_B(2p-2o) \oplus \mO_B(2o-2p) \oplus \mO_B(\tau)$
\item[\emph{(V)}] $0 \leq m \leq 2, \, \mO_B(2o-2p) = \mO_B, \,
V_2(-2o)=\mO_B \oplus \mO_B \oplus \mO_B(\tau)$.

\end{itemize}
\end{proposition}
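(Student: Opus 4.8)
The plan is to pin down $V := V_2(-2o)$ through its twisted first cohomology, using Proposition \ref{prop:W} as the main recognition tool. First I would twist the defining sequence \eqref{eq.V} by $\mO_B(-2o)$, getting
\begin{equation*}
0 \lr Q_1 \oplus Q_2 \oplus Q_3 \lr V \lr \mO_{\tau} \lr 0,
\end{equation*}
where $Q_i := P_i(-2o)$, so that $Q_1 = \mO_B(2p-2o)$, $Q_2 = \mO_B$, $Q_3 = \mO_B(2o-2p)$ are line bundles of degree $0$ and the extension class is still $\xi = (\bar f_1, \bar f_2, \bar f_3)$. Since $\det V_2 = (\det V_1)^3 \otimes \mO_B(\tau)$ by \eqref{eq.V}, one has $\det V = \det V_2 \otimes (\det V_1)^{-3} = \mO_B(\tau)$, so Proposition \ref{prop:W} is available for $V$.

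For each $L \in \textrm{Pic}^0(B)$ I would read off the long exact sequence of the twist by $L$. As $\mO_{\tau} \otimes L = \mO_{\tau}$ and each $Q_i \otimes L$ has degree $0$, the standard cohomology of degree-$0$ line bundles on $B$ gives $h^1(Q_i \otimes L) = 1$ if $Q_i \otimes L = \mO_B$ and $0$ otherwise; moreover $H^0(\mO_{\tau}) \cong \mathbb{C}$, so the connecting map $\delta_L \colon H^0(\mO_{\tau}) \to \bigoplus_i H^1(Q_i \otimes L)$ has rank at most $1$. Its $i$-th component is cup product with $\bar f_i$, which (when $Q_i \otimes L = \mO_B$) is the isomorphism $H^0(\mO_{\tau}) \to H^1(\mO_B)$ precisely if $\bar f_i \neq 0$. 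Hence
\begin{equation*}
h^1(V \otimes L) = \#\{\,i : Q_i \otimes L = \mO_B\,\} - \textrm{rank}(\delta_L),
\end{equation*}
and $\textrm{rank}(\delta_L)=1$ iff some $i$ with $Q_i \otimes L = \mO_B$ has $\bar f_i \neq 0$. The whole function $L \mapsto h^1(V \otimes L)$ is thus governed by just two data: the coincidence pattern among $Q_1,Q_2,Q_3$ — equivalently the order of $p-o$ in $B$, i.e.\ whether $\mO_B(4o-4p)$ and $\mO_B(2o-2p)$ are trivial — and the integer $m=\#\{i:\bar f_i=0\}$.

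With this function in hand I would run through the configurations. If $p-o$ is not $4$-torsion and $m=0$, the three $Q_i$ are distinct and every special $L$ is killed by $\delta_L$, so $h^1(V\otimes L)\equiv 0$ and Proposition \ref{prop:W}$(i)$ gives $V=E_{\tau}(3,1)$ (case I). If $p-o$ is $2$-torsion, all $Q_i$ coincide, the only special value is $L=\mO_B$ with $\#\{i\}=3$ and $\textrm{rank}(\delta)=1$, so $h^1(V)=2$ and Proposition \ref{prop:W}$(iii)$ yields $V=\mO_B\oplus\mO_B\oplus\mO_B(\tau)$ (case V). The intermediate configurations give either a single simple jump $h^1=1$ at one value of $L$ — where Proposition \ref{prop:W}$(ii)$ and Remark \ref{rem:W} return the two candidates $E_{\bullet}(2,\,1)\oplus(\text{line bundle})$ and $F_2\otimes(\text{line bundle})\oplus(\text{line bundle})$, separating the families II and III — or, when $m=2$, a pattern supported at the values coming from the two vanishing $\bar f_i$; in the latter case only one component of $\xi$ is nonzero, so the modification \eqref{eq-V2} touches a single summand of $\textrm S^2 V_1$ and $V$ is a direct sum of three line bundles, to be written down directly (cases IV, and IVc).

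The main obstacle is resolving the alternative left open by Proposition \ref{prop:W}$(ii)$ whenever the cohomology pattern has a single simple jump: cohomology alone does not separate $E_{\bullet}(2,\,1)\oplus(\text{line})$ from $F_2\otimes(\text{line})\oplus(\text{line})$, and this is exactly what distinguishes $\mathrm{IIa}$ from $\mathrm{IIb}$ and fixes $\mathrm{IIIa}$--$\mathrm{IIIc}$. Here I would argue structurally: the decisive invariant is whether $V$ carries a sub-line-bundle of degree $1$. The $F_2$-type candidate does (its degree-$1$ summand), whereas the $E_{\bullet}(2,\,1)$-type candidate does not, being built from a stable rank-$2$ bundle of slope $1/2$ and a degree-$0$ line bundle, whose maximal sub-line-bundle degrees are $\le 0$. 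Equivalently, I would analyse the rank-$2$ elementary modification of the two summands of $\textrm S^2 V_1$ indexed by the nonzero $\bar f_i$ and decide its indecomposability directly. This is where the degenerate torsion of $p-o$ must be handled with care: when two summands of $\textrm S^2 V_1$ become isomorphic, $\textrm{Aut}(\textrm S^2 V_1)$ acquires a $\mathrm{GL}_2$-factor mixing them, so that the isomorphism class of $V$ depends only on the orbit of $\xi$ under this larger group, and one must check which of the two Proposition \ref{prop:W}$(ii)$ candidates that orbit actually produces before matching it to the list.
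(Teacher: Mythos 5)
Your proposal is, at its core, the same proof as the paper's: you compute the function $L \mapsto h^1(V_2(-2o)\otimes L)$ and then identify $V_2(-2o)$ via Proposition \ref{prop:W} and Remark \ref{rem:W}. Your connecting-map formula $h^1(V_2(-2o)\otimes L)=\#\{i : Q_i\otimes L=\mO_B\}-\mathrm{rank}(\delta_L)$ is equivalent to (and tidier than) the paper's computation, which dualizes the four-term presentation \eqref{eq-V2} and reads off $\ker\alpha^{*}$; your tables agree with the paper's, and cases (I), (V) and the $m=2$ cases are handled the same way. The genuinely different ingredient is how you break the two-fold ambiguity in Proposition \ref{prop:W}$(ii)$. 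The paper's mechanism is the observation that $\bar{f}_i=0$ makes $P_i(-2o)$ a direct summand of $V_2(-2o)$; for $m=1$ this kills the $F_2$-type alternative by uniqueness of the decomposition into indecomposables, since $F_2\otimes(\text{line})\oplus(\text{degree-}1\text{ line})$ has no degree-$0$ line-bundle summand. Your criterion (degree-$1$ sub-line-bundles, equivalently decomposability of the rank-$2$ modification $W$ of the two summands with nonzero classes) is correct, and in fact your own formula finishes it: if $W$ had a line-bundle summand $A$ with $\deg A\le 0$, then $h^1(W\otimes A^{-1})\neq 0$, contradicting the vanishing you computed whenever the two relevant $Q_j$'s are non-isomorphic; hence $W$ is indecomposable of rank $2$ and degree $1$, so it is an $E_{\bullet}(2,1)$ by Atiyah. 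You should write this two-line argument out rather than leave ``decide its indecomposability directly'' as a promissory note, but all the tools are already on your page.

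The one place your plan stops genuinely short is case (II) ($m=0$, $\mO_B(4o-4p)=\mO_B$, $\mO_B(2o-2p)\neq\mO_B$), where no $\bar{f}_i$ vanishes, so neither splitting trick has an a priori input, and you only announce an orbit check. Two remarks. First, for the proposition \emph{as stated} nothing more is needed there: Proposition \ref{prop:W}$(ii)$ plus Remark \ref{rem:W} already says $V_2(-2o)$ is one of two explicit bundles, and that is exactly how the paper concludes (``this is either (IIa) or (IIb)''), so your attempt is not weaker than the paper's at this point. Second, be aware that if you do carry out the $\mathrm{GL}_2$-normalization you describe (here $Q_1\cong Q_3$ is the nontrivial $2$-torsion bundle $L_0=\mO_B(2o-2p)$), the class $\xi$ can be moved to one with third component zero, and then your rank-$2$ analysis gives $V_2(-2o)\cong E_{\tau+2o-2p}(2,1)\oplus L_0$ for \emph{every} admissible $\xi$ with $m=0$: the $F_2$-alternative (IIa) never actually occurs, and the printed form of (IIb), namely $E_{\tau}(2,1)\oplus\mO_B$, cannot be right, since it has $h^0=2$ and its $h^1$-jump at $\mO_B$, whereas both your formula and the paper's own table put the jump at $\mO_B(2o-2p)$ and force $h^0=1$; the correct bundle carries the twist by $\mO_B(2o-2p)$. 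So completing your plan lands on a sharper (and slightly corrected) list than the one in the statement; you should flag this discrepancy rather than try to force your output to match the printed cases (IIa)/(IIb).
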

\begin{proof}The proof is not difficult, but one needs to consider several cases; for the reader's convenience,
we will write it in detail. Let $L \in \textrm{Pic}^{0}(B)$;
tensoring the exact sequence \eqref{eq-V2} with $L(-2o)$ we obtain
\begin{equation} \label{eq-V2-2}
0 \lr   L(-o- \tau)  \lr L(-o) \oplus  L(2p-2o) \oplus  L \oplus
L(2o-2p) \lr V_2(-2o) \otimes L \lr 0,
\end{equation}
which in turn induces a linear map in cohomology
\begin{equation*}
\alpha \colon H^1( L(-o- \tau)) \lr H^1( L(-o) \oplus  L(2p-2o)
\oplus L \oplus  L(2o-2p))
\end{equation*}
such that $H^1(V_2(-2o) \otimes L)$ is isomorphic to the cokernel
of $\alpha$. Notice that $\det V_2(-2o) = \mO_B(\tau)$. The first
component of $\alpha$ is always surjective, since it is induced by
the short exact sequence
\begin{equation*}
0 \lr L(-o- \tau) \lr L(-o) \lr \mO_{\tau} \lr 0,
\end{equation*}
therefore if  $L \notin \{\mO_B(2o-2p), \, \mO_B, \,
\mO_B(2p-2o)\}$ the map $\alpha$ is surjective and $H^1(V_2(-2o)
\otimes L)=0$. Taking the dual of $\alpha$, we obtain the map
\begin{equation*}
\alpha^* \colon H^0 \big( L^*(o) \oplus L^*(2o-2p) \oplus L^*
\oplus L^*(2p-2o) \big) \lr H^0(L^*(o+ \tau)),
\end{equation*}
which is given by $(f_0, \, f_1, \, f_2, \, f_3)$; moreover $H^1(V_2(-2o) \otimes L)^*$ is isomorphic to $\ker \alpha^*$. \\
If $\mO_B(2o - 2p) = \mO_B$, then $\alpha^*$ is injective for all
$L \in \textrm{Pic}^0(B) \setminus \{ \mO_B \}$, whereas for $L=
\mO_B$ it has a $2$-dimensional kernel; by using Proposition
\ref{prop:W} we conclude that $V_2(-2o) = \mO_B \oplus \mO_B
\oplus \mO_B(\tau)$, so we are in case $\textrm{(V)}$. Therefore
we may assume $\mO_B(2o - 2p) \neq \mO_B$. Since $\alpha^*$ is
injective unless $L \in \{\mO_B(2o - 2p), \, \mO_B, \, \mO_B(2p -
2o)\}$, we have just to consider these three cases.

If $L = \mO_B(2o - 2p)$ we obtain
\begin{equation*}
h^1(V_2(-2o) \otimes L) = \left\{
  \begin{array}{ll}
    0 & \hbox{if }  \mO_B(4o-4p) \neq \mO_B \hbox{ and } \bar{f_1} \neq 0; \\
    1 & \hbox{if }  \mO_B(4o-4p) \neq \mO_B \hbox{ and } \bar{f_1} = 0; \\
    1 & \hbox{if }  \mO_B(4o-4p) =\mO_B \hbox{ and } \bar{f_1} \neq 0 \hbox{ or } \bar{f_3} \neq 0; \\
    2 & \hbox{if }  \mO_B(4o-4p) = \mO_B \hbox{ and } \bar{f_1} = \bar{f_3} =0. \\
  \end{array}
\right.
\end{equation*}
Analogously, if $L = \mO_B(2p - 2o)$ we obtain
\begin{equation*}
h^1(V_2(-2o) \otimes L) = \left\{
  \begin{array}{ll}
    0 & \hbox{if }  \mO_B(4o-4p) \neq \mO_B \hbox{ and } \bar{f_3} \neq 0; \\
    1 & \hbox{if }  \mO_B(4o-4p) \neq \mO_B \hbox{ and } \bar{f_3} = 0; \\
    1 & \hbox{if }  \mO_B(4o-4p) =\mO_B \hbox{ and } \bar{f_1} \neq 0 \hbox{ or } \bar{f_3} \neq 0; \\
    2 & \hbox{if }  \mO_B(4o-4p) = \mO_B \hbox{ and } \bar{f_1} = \bar{f_3} =0. \\
  \end{array}
\right.
\end{equation*}
Finally, if $L = \mO_B$ we obtain
\begin{equation*}
h^1(V_2(-2o) \otimes L) = \left\{
  \begin{array}{ll}
    0 & \hbox{if }  \bar{f_2} \neq 0; \\
    1 & \hbox{if }  \bar{f_2} =0.
    \end{array}
\right.
\end{equation*}
Now we observe that if $\bar{f}_i =0$ then $P_i(-2o)$ is a direct summand of $V_2(-2o)$, and we analyze the different possibilities. \\
Assume first $\mO_B(4o - 4p ) \neq \mO_B$. In this case there
exist exactly $m$ line bundles $L$ such that $H^1(V_2(-2o)\otimes
L) \neq 0$. By a straightforward application of Proposition
\ref{prop:W} and
Remark \ref{rem:W} we obtain cases $\textrm{(I)}$, $\textrm{(IIIa)}$, $\textrm{(IIIb)}$, $\textrm{(IIIc)}$, $\textrm{(IVa)}$, $\textrm{(IVb)}$, $\textrm{(IVc)}$. \\
Now assume $\mO_B(4o - 4p ) = \mO_B$. Then the only new
possibilities are:
\begin{itemize}
\item $\bar{f}_i \neq 0$ for all $i$, that is $m=0$; then
$H^1(V_2(-2o) \otimes L)$ is trivial for all $L \in
\textrm{Pic}^0(B)$, except in the case $L = \mO_B(2o-2p) =
\mO_B(2p-2o)$ where it is $1$-dimensional. By Proposition
\ref{prop:W} and Remark \ref{rem:W} this is either
$\textrm{(IIa)}$ or $\textrm{(IIb)}$. \item $\bar{f}_1 \neq 0$,
$\bar{f}_2 = 0$, $\bar{f}_3 \neq 0$; then $H^1(V_2(-2o) \otimes
L)$ is trivial for all $L \in \textrm{Pic}^0(B)$, except in the
cases $L = \mO_B(2o-2p)$ and $L= \mO_B$ where it is
$1$-dimensional; this is $\textrm{(IIc)}$.
\end{itemize}
The proof is now complete.
\end{proof}

\subsubsection{The case where $V_1$ is indecomposable} \label{sub:V1-indec}
If $V_1$ is indecomposable, then $V_1=F_2(\eta)$, where $\eta$ is
a $2$-torsion point, so Proposition \ref{prop:vec-ell} yields
$\textrm{S}^2 V_1 = F_3(2o)$. Arguing as in Subsection
\ref{sub:V1-dec}, we obtain
\begin{equation} \label{eq:ext-indec}
\textrm{Ext}^1 (\mO_{\tau}, \, \textrm{S}^2 V_1)= \frac{H^0 (F_3(o
+ \tau))}{H^0 (F_3(o))} \cong \mathbb{C}^3,
\end{equation}
that is $\textrm{Ext}^1 (\mO_{\tau}, \, \textrm{S}^2 V_1)$ can be
identified with the space of global sections of $F_3(o + \tau)$,
modulo the subspace of sections vanishing in $\tau$. For any $v
\in H^0(F_3(o+ \tau))$, we will denote by $\bar{v}$ its image in
$\textrm{Ext}^1 (\mO_{\tau}, \, \textrm{S}^2 V_1)$. Now let us fix
a section $f_0 \in H^0(\mO_B(\tau)) \setminus \{0\}$. Then $V_2$
is the cokernel of a short exact sequence
\begin{equation} \label{eq-V2-indec}
0 \lr \mO_B(o- \tau) \stackrel{i} \lr \mO_B(o) \oplus F_3(2o) \lr
V_2 \lr 0,
\end{equation}
where the injective map $i$ is given by ${}^t (f_0, \, v)$. Notice
that $V_2$ is a vector bundle if and only if $v$ does not vanish
in $\tau$, that is if and only if $\xi:= \bar{v}$ is not the
trivial extension class. We can now give a more precise
description of $V_2$.

\begin{proposition} \label{V2-indecomp}
Assume $V_1 = F_2(\eta)$, where $\eta \in E$ is a $2$--torsion
point. Then we have the following possibilities:
\begin{itemize}
\item [$\emph{(VI)}$] $V_2(-2o) = E_{\tau}(3, \, 1)$
\item[$\emph{(VIIa)}$] $V_2(-2o)= F_2 \oplus \mO_B(\tau)$
\item[$\emph{(VIIb)}$] $V_2(-2o)= E_{\tau}(2,1) \oplus \mO_B$.
\end{itemize}
Moreover, for a general choice of $\xi \in \emph{Ext}^1(\emph{S}^2
V_1, \, \mO_{\tau})$ only $\emph{(VI)}$ occurs.
\end{proposition}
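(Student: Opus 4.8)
The plan is to mirror the strategy of Proposition \ref{V2-decomp}: compute the twisted cohomology $h^1(V_2(-2o) \otimes L)$ for all $L \in \textrm{Pic}^0(B)$ and then read off the isomorphism type of $V_2(-2o)$ from Proposition \ref{prop:W}. First I would record that, since $F_3$ has trivial determinant, sequence \eqref{eq-V2-indec} gives $\det V_2 = \mO_B(6o + \tau)$, hence $\det V_2(-2o) = \mO_B(\tau)$, so that Proposition \ref{prop:W} applies to the rank $3$ bundle $V_2(-2o)$. Tensoring \eqref{eq-V2-indec} by $L(-2o)$ yields
\begin{equation*}
0 \lr L(-o-\tau) \lr L(-o) \oplus (F_3 \otimes L) \lr V_2(-2o) \otimes L \lr 0,
\end{equation*}
and, $B$ being a curve, the induced map $\alpha \colon H^1(L(-o-\tau)) \lr H^1(L(-o)) \oplus H^1(F_3 \otimes L)$ satisfies $H^1(V_2(-2o) \otimes L) \cong \textrm{coker}(\alpha)$.

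The key cohomological input is Proposition \ref{cohomology-fibrati}$(ii)$: $h^1(F_3 \otimes L) = 0$ for every $L \in \textrm{Pic}^0(B) \setminus \{\mO_B\}$, while $h^1(F_3) = 1$. Exactly as in the decomposable case, the first component of $\alpha$ is multiplication by $f_0$, which is surjective because it is induced by $0 \to L(-o-\tau) \to L(-o) \to \mO_{\tau} \to 0$ together with $H^0(L(-o)) = 0$. Thus for $L \neq \mO_B$ the target of $\alpha$ is just $H^1(L(-o)) \cong \mathbb{C}$, so $\alpha$ is surjective and $h^1(V_2(-2o) \otimes L) = 0$. The only bundle requiring attention is $L = \mO_B$, where $\alpha \colon H^1(\mO_B(-o-\tau)) \to H^1(\mO_B(-o)) \oplus H^1(F_3)$ is a map $\mathbb{C}^2 \to \mathbb{C}^2$ whose first component is already surjective; hence $\textrm{rank}(\alpha) \geq 1$ and $h^1(V_2(-2o)) = 2 - \textrm{rank}(\alpha) \in \{0,1\}$. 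In particular the possibility $h^1 = 2$, i.e. case $(iii)$ of Proposition \ref{prop:W}, never arises.

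It then suffices to apply Proposition \ref{prop:W} (with Remark \ref{rem:W}) to $V = V_2(-2o)$. If $\textrm{rank}(\alpha) = 2$, then $h^1(V_2(-2o) \otimes L) = 0$ for all $L$, so part $(i)$ gives case $(\textrm{VI})$, namely $V_2(-2o) = E_{\tau}(3,1)$. If $\textrm{rank}(\alpha) = 1$, then $h^1(V_2(-2o)) = 1$ while the twisted $h^1$ vanishes away from $\mO_B$, so part $(ii)$ leaves precisely the two options $F_2 \oplus \mO_B(\tau)$ and $E_{\tau}(2,1) \oplus \mO_B$, which are cases $(\textrm{VIIa})$ and $(\textrm{VIIb})$. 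This shows $(\textrm{VI})$, $(\textrm{VIIa})$, $(\textrm{VIIb})$ are the only possibilities.

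For the genericity statement I would observe that $\textrm{rank}(\alpha) = 2$ is equivalent to the second component $\alpha_2$ (multiplication by the section $v$ representing $\xi$) not being proportional to the fixed functional $\alpha_1$ on $H^1(\mO_B(-o-\tau)) \cong \mathbb{C}^2$. Since $h^1(V_2(-2o)) = 0$ is an open condition on $\xi$ by semicontinuity, it is enough to produce a single class with $\textrm{rank}(\alpha) = 2$; then $(\textrm{VI})$ holds on a dense open subset of the $3$-dimensional space \eqref{eq:ext-indec}. The main obstacle is exactly this non-degeneracy check: one must verify that the linear assignment $v \mapsto \alpha_2$ does not have image contained in the line $\langle \alpha_1 \rangle$, i.e. that varying the extension class genuinely moves the induced map $H^1(\mO_B(-o-\tau)) \to H^1(F_3)$ off the multiple of multiplication by $f_0$. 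I do not expect to separate $(\textrm{VIIa})$ from $(\textrm{VIIb})$ by cohomology alone, since both share the same invariants $h^1(V_2(-2o) \otimes L)$, so those two subcases are left as genuine alternatives.
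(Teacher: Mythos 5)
Your reduction of the classification to Proposition \ref{prop:W} is correct and coincides with the paper's own argument: the determinant computation giving $\det V_2(-2o)=\mO_B(\tau)$, the identification $H^1(V_2(-2o)\otimes L)\cong \mathrm{coker}\,\alpha$, the vanishing $h^1(F_3\otimes L)=0$ for $L\neq \mO_B$ from Proposition \ref{cohomology-fibrati}, and the observation that surjectivity of the first component of $\alpha$ forces $h^1(V_2(-2o))\leq 1$, so that cases $(i)$ and $(ii)$ of Proposition \ref{prop:W} leave exactly the possibilities (VI), (VIIa), (VIIb). You are also right that the proposition does not require deciding between (VIIa) and (VIIb), so leaving those as alternatives is consistent with the paper.

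The genericity assertion is where your proposal has a genuine gap, and you flag it yourself. Reducing by semicontinuity to the existence of a \emph{single} $v$ with $\alpha$ injective is legitimate (the space \eqref{eq:ext-indec} is irreducible), but that existence is precisely the nontrivial content of the statement, and nothing in your argument addresses it. It cannot be read off by inspection as in the decomposable case of Proposition \ref{V2-decomp}: there the extension class has visible coordinates $(\bar f_1,\bar f_2,\bar f_3)$, whereas here $F_3$ is indecomposable and one must exclude the a priori possible scenario that the functional $\alpha_2$ induced by $v$ lies in $\langle \alpha_1\rangle$ for \emph{every} $v\in H^0(F_3(o+\tau))$. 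The paper fills this step with a concrete computation. First (the Claim inside its proof) it shows that for general $v$ the cokernel $Q$ of $v\colon \mO_B(-o-\tau)\to F_3$ has the form $\mO_B(q)\oplus\mO_B(o+\tau-q)$ for some point $q$: if $Q$ were indecomposable it would equal $F_2(u)$ with $\mO_B(2u)=\mO_B(o+\tau)$, and dualizing (using $F_r^{\ast}\cong F_r$) would exhibit $\mO_B$ as a subbundle of $F_3(u)$, contradicting the fact that every section of $F_3(u)$ vanishes at $u$. Second, dualizing the sequence $0\to\mO_B(-o-\tau)\to F_3\to Q\to 0$ and applying Serre duality, it identifies the image of $\alpha_2^{\ast}$ inside $H^0(\mO_B(o+\tau))$ with the line spanned by the section $s_q$ vanishing at $q$, while the image of the dual of the first component is $\langle s_o\rangle$; since $q\neq o$ for general $v$, these lines are distinct, hence $\alpha^{\ast}$ (and so $\alpha$) is an isomorphism and $h^1(V_2(-2o))=0$. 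Some argument of this kind, exploiting the structure theory of $F_3$, is indispensable: as written, your proof establishes the list of possibilities but not that (VI) is the one occurring for general $\xi$.
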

\begin{proof}

Let $L \in \textrm{Pic}^0(B)$; tensoring the exact sequence
\eqref{eq-V2-indec} with $L(-2o)$ we obtain
\begin{equation} \label{eq-V2-2-indec}
0 \lr L(-o - \tau) \lr L(-o)  \oplus \big(F_3 \otimes L  \big) \lr
V_2(-2o) \otimes L \lr 0,
\end{equation}
which in turn induces a linear map in cohomology
\begin{equation*}
\alpha \colon H^1( L(-o- \tau)) \lr H^1( L(-o)) \oplus H^1(F_3
\otimes L)
\end{equation*}
such that $H^1(V_2(-2o) \otimes L)$ is isomorphic to the cokernel
of $\alpha$. As in the proof of Proposition \ref{V2-decomp}, the
first component of $\alpha$ is always surjective. If $L \neq
\mO_B$ then $H^1(F_3 \otimes L)=0$ (see Proposition
\ref{cohomology-fibrati}); consequently, $\alpha$ is surjective
and $H^1(V_2(-2o) \otimes L)=0$. We must now investigate what
happens for $L=\mO_B$. Let $v \in \textrm{Hom}(\mO_B(-o- \tau), \,
F_3) \cong H^0(F_3(o + \tau))$, and let $Q$ be the cokernel of the
corresponding  map $v \colon \mO_B(-o- \tau) \lr F_3$.

\begin{claim} \label{claim 1}
For a general choice of $v$, we have
\begin{equation*}
Q = \mO_B(q) \oplus \mO_B(o + \tau - q)
\end{equation*}
for some $q \in B$. Moreover, $Q = \mO_B \oplus \mO_B(o + \tau)$
if and only if $\emph{im} \,v \subset W$, where $W$ is the unique
subbundle of $F_3$ isomorphic to $F_2$, see
\emph{\cite[p.433]{At57}}.
\end{claim}
\begin{proof}
Since $F_3(o+ \tau)$ is globally generated, for a general choice
of $v$ the sheaf $Q$ is locally free. If $Q$ were indecomposable
then $Q = F_2(u)$, where $u \in B$ is such that $\mO_B(2u)=
\mO_B(o + \tau)$. Since $F_r$ is self--dual, by taking duals we
obtain the exact sequence
\begin{equation*}
        0 \lr F_2(-u) \lr F_3 \lr \mO_B(o + \tau) \lr 0.
\end{equation*}
By composing it with the injective morphism $\mO_B(-u) \to
F_2(-u)$ induced by the section of $F_2$, we conclude that $\mO_B$
is a sub--vector bundle of $F_3(u)$, but this is a contradiction,
since every section of $F_3(u)$ vanishes in $u$ (see \cite[Section
5, p.108]{CaSch02}); thus $Q$ must be decomposable. Moreover, we
have $Q \cong \mO_B \oplus \mO_B(o+ \tau)$ if and only if there
exists a surjective map $F_3 \lr \mO_B$ whose kernel contains
$\textrm{im}\; v$. But such a kernel is exactly $W$, so we are
done.
\end{proof}
In order to complete the proof of Proposition \ref{V2-indecomp},
let us take a general $v \in H^0(F_3(o + \tau))$. We must then
study the exact sequence
\begin{equation*} \label{seq:to-dualize}
0 \lr \mO_B(-o - \tau) \stackrel{v}{\lr} F_3 \stackrel{j}{\lr}
\mO_B(q) \oplus \mO_B(o+ \tau - q) \lr 0,
\end{equation*}
and in particular the map $\beta$ induced in cohomology as
follows:
\begin{equation} \label{eq:beta}
H^0(\mO_B(q) \oplus \mO_B(o+ \tau - q)) \lr H^1(\mO_B(-o - \tau))
\stackrel{\beta}{\lr} H^1(F_3) \lr 0.
\end{equation}
Dualizing \eqref{eq:beta}, using Serre duality and exploiting the
isomorphism $F_3^{\ast} \cong F_3$ we obtain
\begin{equation*}
 0 \lr H^0(F_3) \stackrel{\beta^*}{\lr} H^0(\mO_B(o + \tau)) \lr H^1(\mO_B(-q) \oplus \mO_B(-o - \tau + q)),
 \end{equation*}
hence $\textrm{im} \,\beta^*$ can be identified with $\langle s_q
\rangle$, the line generated by the unique non-zero section $s_q
\in H^0(\mO_B(o + \tau))$ such that $s_q(q)=0$. Now, looking at
sequence \eqref{eq-V2-2-indec} for $L = \mO_B$, we see that
$\alpha$ is dual to
\begin{equation*}
\alpha^* \colon H^0(\mO_B(o)) \oplus H^0(F_3) \stackrel{(f_0, \,
\beta^*)}{\lr} H^0(\mO_B(o + \tau)),
\end{equation*}
so the image of $\alpha^*$ is the subspace spanned by $s_o$ and
$s_q$. Since $v$ is general we have $o \neq q$, hence $s_o$ and
$s_q$ are linearly independent sections in $H^0(\mO_B(o + \tau))$
and this implies that $\alpha^*$ is an isomorphism. Consequently,
$\alpha$ is also an isomorphism and for a general choice of $\xi =
\bar{v}$ we obtain $h^1(V_2(-2o))=0$. For some special choice of
$v \in H^0(F_3(o + \tau))$ it may happen that $\alpha^{*}$ has a
$1$-dimensional kernel, consequently, $\alpha$ has a
$1$-dimensional cokernel and $h^1(V_2(-2o))=1$. Therefore we can
apply Proposition \ref{prop:W}, concluding the proof of
Proposition \ref{V2-indecomp}.
\end{proof}

\section{The moduli space}

Let $\mathcal{M}$ be the moduli space of minimal surfaces of
general type $S$ with $p_g(S)=2$, $q(S)=1$ and $K_S^2=5$. We write
$\mathcal{M}=\mathcal{M}' \cup \mathcal{M}''$, where
$\mathcal{M}'$ corresponds to surfaces such that $ V_1$ is
decomposable and $\mathcal{M}''$ corresponds to surfaces such that $ V_1$ is indecomposable. \\

\begin{definition} \label{stratification}
We stratify $\mathcal{M}'$ and $\mathcal{M}''$ as
\begin{equation*}
\begin{split}
\mathcal{M}'& =\mathcal{M}_{\emph{I}} \cup \mathcal{M}_{\emph{IIa}} \cup \cdots \cup \mathcal{M}_{\emph{V}} \\
\mathcal{M}'' & = \mathcal{M}_{\emph{VI}} \cup
\mathcal{M}_{\emph{VIIa}} \cup \mathcal{M}_{\emph{VIIb}},
\end{split}
\end{equation*}
according to the decomposition type for $ V_2 $, as in
Propositions \emph{\ref{V2-decomp}} and \emph{\ref{V2-indecomp}}.
\end{definition}

Now we want to estimate the dimensions of these strata. By
Catanese-Pignatelli's structure theorem for genus $2$ fibrations,
we can consider a surjective map $\Phi \colon \mathcal{D} \lr
\mathcal{M}$, where $\mathcal{D}$ is the set of admissible
$5$--tuples $(B, V_1, \tau, \xi, w)$ which give surfaces with our
numerical invariants and belonging to a given stratum. Therefore
in each case the dimension of the stratum is less than or equal to
the dimension of $\mathcal{D}$.

Moreover, we will see that each strata can be parametrized via a
unirational family; therefore $\mathcal{M}$ itself is unirational.
\begin{remark} \label{rem.3.2}
In order to compute the exact dimension of each strata of the
moduli space, we must compute the dimension of the corresponding
parameter space $\mathcal{D}$, and then subtract from the result
the dimension of the general fibre of $\Phi$. Such a fibre will
correspond to the orbit of the action of certain automorphism
groups over our construction data.

Locally around the point $[S] \in \mathcal{M}$, the coarse moduli
space $\mathcal{M}$ is analytically isomorphic to the quotient of
the base $T$ of the Kuranishi family by the finite group
$\emph{Aut}(S)$. Hence
\begin{equation*}
h^1(S, \, T_S) \geq \dim_{[S]} \mathcal{M} \geq h^1(S, \, T_S) -
h^2(S, \, T_S) = 10 \chi(\mO_S) - 2 K_S^2 = 10.
\end{equation*}

When $q=1$ one obtains the better lower bound $10 \chi(\mO_S) - 2
K_S^2 + p_g = 12$, see \emph{\cite{Ran95}} and
\emph{\cite{Cle05}}. So in our case we have
\begin{equation*}
    h^1(S, \, T_S) \geq \dim_{[S]} \mathcal{M} \geq 12.
\end{equation*}
This implies that those strata whose dimension is less than $12$
can be disregarded for the determination of the irreducible
components of $\mathcal{M}$.

For further application, let us describe a method that can be used
in order to estimate $h^1(S, \, T_S)$, see \emph{\cite{Pi09}}.
There is an exact sequence
\begin{equation*}
    0 \lr \omega_S \lr \Omega_S^1 \otimes \omega_S \lr \omega_S^{\otimes 2} \lr \mO_{\emph{Crit}(f)} (\omega_S^{\otimes 2}) \lr 0,
\end{equation*}
where $f \colon S \lr B := \emph{Alb} (S)$ is the Albanese map of
$S$. Setting $\mathcal{F} := (\Omega_S^1 \otimes \omega_S) /
\omega_S$, we get
\begin{equation*}
    0 \lr \mathcal{F} \lr \omega_S^{\otimes 2} \lr \mO_{\emph{Crit}(f)} (\omega_S^{\otimes 2}) \lr 0.
\end{equation*}
Therefore
\begin{equation}\label{kernel}
            2 = h^0(S, \, \omega_S) \leq h^0(S, \, \Omega_S^1 \otimes \omega_S) \leq h^0(S, \, \omega_S) + h^0(S, \, \mathcal{F}) = 2 + h^0(S, \, \mathcal{F}),
\end{equation}
and by the Serre duality $h^2(S, \, T_S) = h^0(S, \, \Omega_S^1
\otimes \omega_S)$. Finally,
\begin{equation*}
    0 \lr H^0(S, \, \mathcal{F}) \lr H^0 (S, \, \omega_S^{\otimes 2}) \lr H^0 (S, \, \omega_S^{\otimes 2}
    \otimes \mO_{\emph{Crit}(f)}) \lr 0
\end{equation*}
implies that $H^0(S, \, \mathcal{F})$ is the vector space given by
the bicanonical curves of $S$ passing through $\emph{Crit}(f)$.
\end{remark}

Let us start by studying $\mathcal{M}'$. We have $\mO_B(p) \oplus
\mO_B(2o-p) \cong \mO_B(q) \oplus \mO_B(2o-q)$ if and only if
either $p=q$ or $p+q \in |2o|$; therefore, when $p$ varies in $B$,
the vector bundle $V_1$ varies into a $1$-dimensional family
isomorphic to $\mathbb{P}^1$.

\begin{proposition} \label{prop:I}
The stratum $\mathcal{M}_{\textrm{I}}$ is nonempty, irreducible,
of dimension at most $13$.
\end{proposition}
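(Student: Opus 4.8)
The plan is to exploit the Catanese--Pignatelli correspondence (Theorem \ref{teo:genus 2}): every surface in $\mathcal{M}_{\textrm{I}}$ is recovered from an admissible $5$-tuple $(B, V_1, \tau, \xi, w)$ for which $V_1 = \mO_B(p) \oplus \mO_B(2o-p)$ with $\mO_B(4o-4p) \neq \mO_B$, the class $\xi$ has $m=0$, and $V_2(-2o) = E_{\tau}(3, \, 1)$, as in case (I) of Proposition \ref{V2-decomp}. Letting $\mathcal{D}_{\textrm{I}} \subset \mathcal{D}$ be the locally closed set of such $5$-tuples, the restriction of $\Phi$ gives a surjection $\mathcal{D}_{\textrm{I}} \lr \mathcal{M}_{\textrm{I}}$, so it suffices to bound $\dim \mathcal{D}_{\textrm{I}}$ from above, to show that $\mathcal{D}_{\textrm{I}}$ is irreducible and unirational, and to verify that the open conditions $(\mathcal{P}_1)$ and $(\mathcal{P}_2)$ are met by some $5$-tuple of this shape.

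First I would count parameters. The curve $B$ contributes $1$; as $p$ varies, $V_1$ sweeps out the $\mathbb{P}^1$ described just before the statement, contributing $1$; the point $\tau$ contributes $1$; the class $\xi$ ranges in the open subset $\{m=0\}$ of $\mathbb{P}(\textrm{Ext}^1(\mO_{\tau}, \, \textrm{S}^2 V_1)) \cong \mathbb{P}^2$, contributing $2$; and finally $w$ ranges in $\mathbb{P}H^0(\widetilde{A}_6)$. The heart of the count is therefore $h^0(\widetilde{A}_6)$, which I would read off the sequence \eqref{A6s}: tensoring \eqref{A6} by $(\det V_1 \otimes \mO_B(\tau))^{-2}$ and using $\det V_1 = \mO_B(2o)$ gives $G_1 = V_2(-2\tau)$ and $G_2 = \textrm{S}^3 V_2 \otimes \mO_B(-4o - 2\tau)$. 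Since $V_2(-2o) = E_{\tau}(3, \, 1)$, one has $G_1 = E_{\tau}(3, \, 1) \otimes \mO_B(2o - 2\tau)$ with $\mO_B(2o-2\tau) \in \textrm{Pic}^0(B)$, so Proposition \ref{cohomology-fibrati}$(i)$ yields $h^0(G_1) = 1$, $h^1(G_1) = 0$. For $G_2$ I would use that $E_{\tau}(3, \, 1)$ is stable (rank and degree are coprime), whence $V_2$ is stable of slope $\tfrac{7}{3}$ and $\textrm{S}^3 V_2$ is semistable of slope $7$; twisting by the degree $-6$ line bundle $\mO_B(-4o-2\tau)$, $G_2$ is semistable of slope $1$, so $G_2^{\ast}$ is semistable of negative slope and $h^1(G_2) = h^0(G_2^{\ast}) = 0$, giving $h^0(G_2) = \chi(G_2) = \deg G_2 = 10$. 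Feeding this into \eqref{coker} (which is an equality here, because $h^1(G_1) = 0$) gives $h^0(\widetilde{A}_6) = 10 - 1 = 9$, so $w$ contributes $\dim \mathbb{P}H^0(\widetilde{A}_6) = 8$. Adding up, $\dim \mathcal{D}_{\textrm{I}} = 1 + 1 + 1 + 2 + 8 = 13$, and since $\Phi$ is surjective we obtain $\dim \mathcal{M}_{\textrm{I}} \leq 13$ (the general fibre of $\Phi$ being finite, the normalization $\det V_1 = \mO_B(2o)$ rigidifying the translation action up to $2$-torsion).

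For irreducibility and unirationality I would observe that the vanishing statements above hold for \emph{every} $5$-tuple of type (I), so $h^0(\widetilde{A}_6) = 9$ is constant over the stratum and $\mathcal{D}_{\textrm{I}}$ is an open subset of an honest $\mathbb{P}^8$-bundle over the base parametrizing $(B, V_1, \tau, \xi)$, which is itself a tower of fibrations with rational, irreducible fibres over the $j$-line. Hence $\mathcal{D}_{\textrm{I}}$ is irreducible and unirational, and so is its image $\mathcal{M}_{\textrm{I}}$ under $\Phi$. For nonemptiness I would argue that $(\mathcal{P}_1)$ and $(\mathcal{P}_2)$ hold for a general member of $\mathcal{D}_{\textrm{I}}$: for general $\xi$ with $m=0$ the conic bundle $\mathcal{C}$ acquires only Rational Double Points, and then, the system $|\mO_{\mathcal{C}}(6) \otimes (\det V_1 \otimes \mO_B(\tau))^{-2}|$ being sufficiently positive, a Bertini argument on $\mathcal{C}$ shows that a general $\Delta$ has only simple singularities.

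The main obstacle I anticipate is twofold. The cleaner half is the cohomology of $G_2$: it rests on the (standard but not purely formal) fact that symmetric powers of a stable bundle on an elliptic curve remain semistable, which is what forces $h^1(G_2) = 0$ and pins down $h^0(\widetilde{A}_6)$ exactly. The more delicate half is the nonemptiness: verifying $(\mathcal{P}_1)$ for $\mathcal{C}$ and $(\mathcal{P}_2)$ for the general $\Delta$ requires a genuine local analysis of the singularities of the conic bundle and of the double cover branched along $\Delta$, rather than a mere dimension count, and this is where I expect the real work to lie.
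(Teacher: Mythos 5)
Your dimension bound and irreducibility argument are correct and essentially match the paper's: the paper also counts $1+1+1+2+8=13$ parameters, getting $h^0(\widetilde{A}_6)=9$ from the sequence $0 \to W(2o-2\tau) \to \textrm{S}^3 W(2o-2\tau) \to \widetilde{A}_6 \to 0$ with $W = E_{\tau}(3,\,1)$. Where the paper cites \cite[Section 1]{CaCi93} for $h^0(\textrm{S}^3 W(2o-2\tau))=10$, you derive it from semistability of symmetric powers of a stable bundle; that is a legitimate, and arguably more self-contained, substitute.

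The genuine gap is nonemptiness, and you have in effect conceded it yourself: you assert that ``for general $\xi$ with $m=0$ the conic bundle $\mathcal{C}$ acquires only Rational Double Points'' and that a Bertini argument handles $(\mathcal{P}_2)$, but you give no mechanism for verifying either claim, and your closing paragraph admits that this is ``where the real work lies.'' The obstruction is concrete: in case (I) the bundle $V_2 = E_{\tau}(3,\,1)(2o)$ is \emph{indecomposable}, so $\mathbb{P}(V_2)$ admits no global coordinates on its fibres, and one cannot simply write down the equation of $\mathcal{C}$ (nor of a relative cubic cutting $\Delta$) and check singularities; genericity of $\xi$ by itself does not produce the required local analysis. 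The paper's entire second half is devoted to overcoming exactly this: it pulls everything back along a degree $3$ isogeny $\varphi \colon \widetilde{B} \to B$, under which $\varphi^* E_{\tau}(3,\,1)$ splits as a sum of three line bundles, giving honest fibre coordinates $y_0, y_1, y_2$; it characterizes the $G \cong \mathbb{Z}_3$--invariant relative conics, constructs an invariant conic bundle $\widetilde{\mathcal{C}}$ whose only singularities (for general coefficients, checked by an explicit local computation) are three nodes forming a $G$--orbit, so that the quotient $\mathcal{C}$ has a single $A_1$ point, verifying $(\mathcal{P}_1)$; and it then verifies $(\mathcal{P}_2)$ using the nontrivial geometric input that $|3D_o|$ is base point free on $\mathbb{P}(E_{\tau}(3,\,1)) \cong \textrm{Sym}^3 B$ (again from \cite{CaCi93}), so that a general complete intersection $\mathcal{G} \cap \mathcal{C}$ is smooth and misses the node. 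Your Bertini sketch also needs precisely this base-point-freeness, plus care at the singular point of $\mathcal{C}$ (Bertini gives smoothness only on the smooth locus of the ambient variety away from the base locus), neither of which your proposal supplies. As written, your argument establishes the upper bound and irreducibility but not that $\mathcal{M}_{\textrm{I}} \neq \emptyset$.
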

\begin{proof}
Set $ W :=E_{\tau}(3, \, 1)$; then $V_2= W(2o)$ and we have a
short exact sequence
\begin{equation*}
0 \lr W(2o - 2 \tau) \lr \textrm{S}^3 W (2o - 2 \tau) \lr
\widetilde{ A}_6 \lr 0,
\end{equation*}
see \eqref{A6} and \eqref{A6s}. By \cite[Section 1]{CaCi93} we
obtain
\begin{equation*}
h^0( W(2o - 2 \tau))=1, \quad  h^1( W(2o - 2 \tau))=0, \quad
h^0(\textrm{S}^3 W(2o - 2 \tau))=10,
\end{equation*}
hence $h^0(\widetilde{ A}_6)=9$. We have $1$ parameter for $B$,
$1$ parameter for $V_1$, $2$ parameters for $\xi$, $1$ parameter
for $\tau$ and $8$ parameters from $\mathbb{P}H^0(\widetilde{
A}_6)$. Therefore $\mathcal{M}_\textrm{I}$ has dimension at most
$13$, and it is irreducible since it can be parametrized via an
irreducible family.

Now let us show that it is non-empty. For the sake of simplicity,
we assume $\tau=o$ and we write $\pi \colon \mathbb{P}(W) \lr B$
and $\pi_2 \colon \mathbb{P}(V_2) \lr B$ for the projective
bundles associated to $W$ and $V_2$, respectively. There is an
isomorphism of projective bundles $\psi\colon \mathbb{P}(W) \lr
\mathbb{P}(V_2)$ such that
\begin{equation}\label{o}
\psi^* \mO_{\mathbb{P}(V_2)} (1) \cong \mO_{\mathbb{P}(W)} (1)
\otimes \pi^*\mO_B (2o).
\end{equation}
The projective bundle $\mathbb{P}(W)$ can be identified with
$\textrm{Sym}^3 B$, see for instance \cite{CaCi93}. For all $x \in
B$, set:
\begin{equation*}
\begin{split}
    D_x &= \{ x + x_2 + x_3 \mid x_2, \, x_3 \in B \}, \\
    F_x &= \{ x_1 + x_2 + x_3 \mid x_1 \oplus x_2 \oplus x_3 = x \}.
\end{split}
\end{equation*}
Then $D_o$ is the divisor class of $\mO_{\mathbb{P}(W)} (1)$, and
\eqref{o} implies that
\begin{equation}\label{*}
 \mO_{\mathbb{P}(V_2)} (1) = \mO_{\mathbb{P}(V_2)} (D_o + 2 F_o).
\end{equation}

Thus $\mathcal{C} \in |\mO_{\mathbb{P}(V_2)}(2) \otimes
\pi_2^*(\det(V_1))^{-2}| = |2 D_o + 4 F_o - 4 F_o| = |2 D_o|$.

Let now $\varphi \colon \widetilde{B} \lr B$ be an isogeny of
degree $3$, and set $G := \ker (\varphi) \cong \mathbb{Z}_3$. If
we write
\begin{equation*}
\varphi^{-1}(o) = \{\tilde{o}, \, \tilde{a}, \, \tilde{b}\},
\end{equation*}
we have $G = \langle t_{\tilde{a}}^* \rangle$, where
$t_{\tilde{a}}^*$ is the translation by $\tilde{a}$.

By \cite{At57} there exists a line bundle $L \in \textrm{Pic}
(\widetilde{B})$ of degree $1$ such that
\begin{equation*}
\varphi_* L = W
\end{equation*}
and moreover
\begin{equation}\label{oo}
\begin{split}
\varphi^* \varphi_* L = \varphi^* E_\tau (3,1) &= \mO_{\widetilde{B}}(\tilde{o}) \oplus t_{\tilde{a}}^*\mO_{\widetilde{B}}(\tilde{o}) \oplus (t_{\tilde{a}}^*)^2 \mO_{\widetilde{B}}(\tilde{o}) \\
                &= \mO_{\widetilde{B}}(\tilde{o}) \oplus \mO_{\widetilde{B}}(\tilde{a}) \oplus \mO_{\widetilde{B}}(\tilde{b}),
\end{split}
\end{equation}
see \cite[Theorem 2.2]{Is05}. Let us define $\widetilde{E} :=
\varphi^* (W \otimes \mO_{B} (2o))$; since the divisor $2
\tilde{a} + 2 \tilde{b}$ is linearly equivalent to $4 \tilde{o}$,
equation \eqref{oo} yields
\begin{equation*}
\begin{split}
\widetilde{E} &= \varphi^* W \otimes \mO_{\widetilde{B}}(2 \tilde{o} + 2\tilde{a} + 2\tilde{b})  \\
                &= \mO_{\widetilde{B}}(3\tilde{o} +2\tilde{a} + 2\tilde{b}) \oplus \mO_{\widetilde{B}}(2\tilde{o} + 3\tilde{a} + 2\tilde{b}) \oplus
                \mO_{\widetilde{B}}(2\tilde{o} + 2\tilde{a} +
                3\tilde{b}) \\
                &=\mO_{\widetilde{B}}(7 \tilde{o}) \oplus \mO_{\widetilde{B}}(6\tilde{o} + \tilde{a}) \oplus
                \mO_{\widetilde{B}}(6 \tilde{o} + \tilde{b}).
\end{split}
\end{equation*}

From the commutative diagram
\begin{center}
 \begin{tikzpicture}
 \node (P_tilde) at (0,1.2) [] {$\mathbb{P}(\widetilde{E})$};
 \node (B_tilde) at (0,0) [] {$\widetilde{B}$};
 \node (P) at (1.7,1.2) [] {$\mathbb{P}(V_2)$};
 \node (B) at (1.7,0) [] {$B$};

 \draw[->] (P_tilde.south) -- node[left]{\scriptsize{$\tilde{\pi}_2$}}(B_tilde.north);
 \draw[->] (P_tilde.east) -- node[above]{\scriptsize{$\Phi$}}(P.west);
 \draw[->] (P.south) -- node[right]{\scriptsize{$\pi_2$}}(B.north);
 \draw[->] (B_tilde.east) -- node[above]{\scriptsize{$\varphi$}}(B.west);
 \end{tikzpicture}
\end{center}
it follows
\begin{equation*}
\begin{split}
\Phi_* \Phi^* \mO_{\mathbb{P} (V_2)} (D_o) &= \mO_{\mathbb{P} (V_2)} (D_o) \otimes \Phi_* \mO_{\mathbb{P} (\widetilde{E})} \\
                  &= \mO_{\mathbb{P} (V_2)} (D_o) \otimes (\mO_{\mathbb{P}(V_2)} \oplus \mathcal {L} \oplus \mathcal {L}^2) \\
                  &= \mO_{\mathbb{P}(V_2)}(D_o) \oplus (\mO_{\mathbb{P} (V_2)} (D_o) \otimes \mathcal {L}) \oplus (\mO_{\mathbb{P} (V_2)} (D_o) \otimes \mathcal {L}^2),
\end{split}
\end{equation*}
where $\mathcal{L}$ is the $3$--torsion line bundle inducing the
\'{e}tale $\mathbb{Z}_3$-cover $\Phi \colon \mathbb{P}
(\widetilde{E}) \lr \mathbb{P}(V_2)$. By \eqref{*} we see that
\begin{equation*}
\begin{split}
\Phi^* \mO_{\mathbb{P} (V_2)} (D_o) &= \Phi^* (\mO_{\mathbb{P}(V_2)} (1) \otimes \pi_2^* \mO_B (-2o)) \\
                                    &= \mO_{\mathbb{P} (\widetilde{E})} (1) \otimes \tilde{\pi}_2^* \mO_{\widetilde{B}} (-6\tilde{o}).
\end{split}
\end{equation*}

Let $y_0$, $y_1$ and $y_2$ be global coordinates on the fibers of
$\tilde{\pi}_2$, namely
\begin{equation*}
\begin{split}
&y_0 \in H^0 (\mO_{\mathbb{P}(\widetilde{E})} (1) \otimes \tilde{\pi}_2^* \mO_{\widetilde{B}} (-7\tilde{o}) \\
&y_1 \in H^0 (\mO_{\mathbb{P}(\widetilde{E})} (1) \otimes \tilde{\pi}_2^* \mO_{\widetilde{B}} (-6\tilde{o} - \tilde{a}) \\
&y_2 \in H^0 (\mO_{\mathbb{P}(\widetilde{E})} (1) \otimes
\tilde{\pi}_2^* \mO_{\widetilde{B}} (-6\tilde{o} - \tilde{b})).
\end{split}
\end{equation*}

We have $h^0 (\Phi^* \mO_{\mathbb{P} (V_2)} (D_o)) = 3$ and a
general section of $\Phi^* \mO_{\mathbb{P} (V_2)} (D_o)$ can be
written as
\begin{equation*}
\sigma = \lambda_0 y_0 + \lambda_1 y_1 + \lambda_2 y_2,
\end{equation*}
where $\lambda_0 \in H^0 (\tilde{\pi}_2^*
\mO_{\widetilde{B}}(\tilde{o}))$, $\lambda_1 \in H^0
(\tilde{\pi}_2^* \mO_{\widetilde{B}}(\tilde{a}))$ and $\lambda_2
\in H^0 (\tilde{\pi}_2^* \mO_{\widetilde{B}}(\tilde{b}))$.

Then a straightforward computation shows that we can choose the
$y_i$ so that the action of $t_{\tilde{a}}^* \in G$ on the $y_i$
is given by
\begin{equation} \label{translation}
t_{\tilde{a}}^*\colon \left\{
    \begin{array}{l}
         y_0 \mapsto y_1 \\
         y_1 \mapsto y_2 \\
         y_2 \mapsto y_0.
    \end{array}
       \right.
\end{equation}

Therefore $t_{\tilde{a}}^* \sigma = (t_{\tilde{a}}^* \lambda_0)
y_1 + (t_{\tilde{a}}^* \lambda_1) y_2 + (t_{\tilde{a}}^*
\lambda_2) y_0$, so $\sigma$ is $G$--invariant if and only if
$t_{\tilde{a}}^* \lambda_0 = \lambda_1$, $t_{\tilde{a}}^*
\lambda_1 = \lambda_2$ and $t_{\tilde{a}}^* \lambda_2 =
\lambda_0$. Since $(t_{\tilde{a}}^*)^2 = t_{\tilde{b}}^*$, this is
equivalent to require $\lambda_1 = t_{\tilde{a}}^* \lambda_0$ and
$\lambda_2 = t_{\tilde{b}}^* \lambda_0$. So a general invariant
section of $\Phi^* \mO_{\mathbb{P} (V_2)} (D_o)$ is given by
\begin{equation*}
\lambda y_0 + (t_{\tilde{a}}^* \lambda) y_1 + (t_{\tilde{b}}^*
\lambda) y_2,
\end{equation*}
where $\lambda \in H^0(\mO_{\widetilde{B}}(\tilde{o}))$.

Now a general section of $\Phi^* \mO_{\mathbb{P} (V_2)} (2D_o)$ is
of the form:
\begin{equation*}
\begin{split}
\sigma &= \sum_{i+j+k=2} \lambda_{ijk} y_0^i y_1^j y_2^k \\
       &= \lambda_{200} y_0^2 + \lambda_{020} y_1^2 + \lambda_{002} y_2^2 + \lambda_{110} y_0 y_1 + \lambda_{101} y_0 y_2 + \lambda_{011} y_1 y_2,
\end{split}
\end{equation*}
where the $\lambda_{ijk}$ are sections of pullbacks of suitable
line bundles on $\widetilde{B}$.

By (\ref{translation}), $t_{\tilde{a}}^*$ acts on $\sigma$ as
\begin{equation*}
t_{\tilde{a}}^* \sigma = (t_{\tilde{a}}^* \lambda_{200}) y_1^2 +
(t_{\tilde{a}}^* \lambda_{020}) y_2^2 + (t_{\tilde{a}}^*
\lambda_{002}) y_0^2 + (t_{\tilde{a}}^* \lambda_{110}) y_1 y_2 +
(t_{\tilde{a}}^* \lambda_101) y_0 y_1 + (t_{\tilde{a}}^*
\lambda_{011}) y_0 y_2,
\end{equation*}
so $\sigma$ is $G$--invariant if and only if
\begin{equation*}
\begin{split}
&\lambda_{020} = t_{\tilde{a}}^* \lambda_{200}, \quad \lambda_{002} = t_{\tilde{a}}^* \lambda_{020} = t_{\tilde{b}}^* \lambda_{200}, \\
&\lambda_{011} = t_{\tilde{a}}^* \lambda_{110}, \quad
\lambda_{101} = t_{\tilde{a}}^* \lambda_{011} = t_{\tilde{b}}^*
\lambda_{110}.
\end{split}
\end{equation*}
Hence a general invariant section of $\Phi^* \mO_{\mathbb{P}
(V_2)} (2D_o)$ can be written as
\begin{equation}\label{inv.quad.}
\lambda y_0^2 + (t_{\tilde{a}}^* \lambda) y_1^2 + (t_{\tilde{b}}^*
\lambda) y_2^2 + \mu y_0 y_1 + (t_{\tilde{b}}^* \mu) y_0 y_2 +
(t_{\tilde{a}}^* \mu) y_1 y_2,
\end{equation}
with $\lambda \in H^0 (\mO_{\widetilde{B}} (2 \tilde{o}))$, $\mu
\in H^0(\mO_{\widetilde{B}} (\tilde{o} + \tilde{a}))$.

Denoting by $\tilde{p} \in \widetilde{B}$ any of the points in
$\varphi^{-1}(p)$, the short exact sequence \eqref{eq.V} lifts to
\begin{equation} \label{eq.pullback.sigma2}
0 \lr \mO_{\widetilde{B}}(6 \tilde{p}) \oplus
\mO_{\widetilde{B}}(6\tilde{o}) \oplus
                \mO_{\widetilde{B}}(12 \tilde{o} -6 \tilde{p})
                \stackrel{\tilde{\sigma}_2}\lr \widetilde{E} \lr \mO_{\tilde{o}+\tilde{a} +
                \tilde{b}} \lr 0.
\end{equation}
Taking global coordinates $\tilde{x}_0$, $\tilde{x}_1$ on the
fibres of $\varphi^*V_1=\mO_{\widetilde{B}}(3 \tilde{p}) \oplus
\mO_B(6 \tilde{o} - 3 \tilde{p})$, the map $\tilde{\sigma}_2$ is
given by
\begin{equation*}
\left\{
  \begin{array}{ll}
    \tilde{\sigma}_2(\tilde{x}_0^2)= a_{00} y_0 + a_{01}y_1 + a_{02} y_2 \\
    \tilde{\sigma}_2(\tilde{x}_0 \tilde{x}_1)= a_{10} y_0 + a_{11}y_1 + a_{12} y_2  \\
    \tilde{\sigma}_2(\tilde{x}_1^2)= a_{20} y_0 + a_{21}y_1 + a_{22}
y_2,
  \end{array}
\right.
\end{equation*}
where
\begin{equation*}
\begin{split}
a_{00} & \in H^0(\tilde{\pi}_2^* \mO_{\widetilde{B}}(7 \tilde{o} -
6 \tilde{p})), \quad a_{01} \in H^0(\tilde{\pi}_2^*
\mO_{\widetilde{B}}(6 \tilde{o} - 6 \tilde{p} + \tilde{a})), \quad
a_{02}  \in H^0(\tilde{\pi}_2^* \mO_{\widetilde{B}}(6 \tilde{o} -
6
\tilde{p}+ \tilde{b})), \\
a_{10} & \in H^0(\tilde{\pi}_2^* \mO_{\widetilde{B}}(\tilde{o})),
\quad \quad \quad \quad a_{11}  \in
H^0(\tilde{\pi}_2^*\mO_{\widetilde{B}}(\tilde{a})), \quad  \quad
\quad \quad \quad a_{12}
\in H^0(\tilde{\pi}_2^* \mO_{\widetilde{B}}(\tilde{b})), \\
a_{20} & \in H^0(\tilde{\pi}_2^* \mO_{\widetilde{B}}(6 \tilde{p} -
5 \tilde{o})), \quad a_{21} \in H^0(\tilde{\pi}_2^*
\mO_{\widetilde{B}}(6 \tilde{p} - 6 \tilde{o} + \tilde{a})), \quad
a_{22}  \in H^0(\tilde{\pi}_2^* \mO_{\widetilde{B}}(6 \tilde{p} -
6 \tilde{o}+ \tilde{b})).
\end{split}
\end{equation*}
Let us consider now the conic bundle $\widetilde{\mathcal{C}}
\subset \mathbb{P}(\widetilde{E})$ given by
\begin{equation*}
(a_{00} y_0 + a_{01}y_1 + a_{02} y_2)(a_{20} y_0 + a_{21}y_1 +
a_{22})-(a_{10} y_0 + a_{11}y_1 + a_{12} y_2)^2=0.
\end{equation*}
If we choose
\begin{equation*}
a_{01}=t_{\tilde{a}}^*a_{00}, \; \; a_{02}=t_{\tilde{b}}^*a_{00},
\quad a_{11}=t_{\tilde{a}}^*a_{10}, \; \;
a_{12}=t_{\tilde{b}}^*a_{10}, \quad a_{21}=t_{\tilde{a}}^*a_{20},
\; \; a_{22}=t_{\tilde{b}}^*a_{20}
\end{equation*}
the equation of $\widetilde{\mathcal{C}}$ is $G$-invariant, hence
of the form \eqref{inv.quad.}; in fact, we have
\begin{equation*}
\lambda = a_{00}a_{20}-a_{10}^2, \quad  \mu=
a_{00}(t_{\tilde{a}}^*a_{20})+(t_{\tilde{a}}^*a_{00})a_{20}-2a_{10}(t_{\tilde{a}}^*a_{10}).
\end{equation*}
We claim that, for a general choice of $a_{00}$, $a_{10}$,
$a_{20}$, the only singularities of $\widetilde{\mathcal{C}}$ are
three rational double points of type $A_1$, lying over the three
points $\tilde{o}$, $\tilde{a}$, $\tilde{b}$. Since
$\tilde{\sigma}_2$ is of maximal rank outside these points, and
since they form an orbit for the $G$-action, it is sufficient to
check that the fibre over $\tilde{o}$ has a node (which will be
automatically a point of type $A_1$ for
$\widetilde{\mathcal{C}})$. In a neighborhood of this fibre, set
\begin{equation*}
\begin{split}
u_0 & :=a_{00}(\tilde{o}) y_0 + a_{01}(\tilde{o})y_1 +
a_{02}(\tilde{o}) y_2, \\ u_1 & :=a_{10}(\tilde{o}) y_0 +
a_{11}(\tilde{o})y_1 + a_{12}(\tilde{o}) y_2, \\
u_2 & :=a_{20}(\tilde{o}) y_0 + a_{21}(\tilde{o})y_1 +
a_{22}(\tilde{o}) y_2.
\end{split}
\end{equation*}
Since $\tilde{\sigma}_2$ drops rank in $\tilde{o}$, we can find
$c_0, \,c_2 \in \mathbb{C}$ such that $u_1=c_0 u_0 + c_2 u_2$;
then a local equation of the fibre of $\widetilde{\mathcal{C}}$
over $\tilde{o}$ is given by
\begin{equation} \label{eq.u}
u_0u_2-(c_0u_0+c_2u_2)^2=0.
\end{equation}
Since for a general choice of $a_{00}$, $a_{10}$, $a_{20}$ (i.e.
for a general choice of $c_0$, $c_2$) the quadratic form
\eqref{eq.u} splits into two \emph{distinct} linear forms, our
claim is proven.

Therefore the image of $\mathcal{\widetilde{C}}$ in
$\mathbb{P}(V_2)$ is a conic bundle $\mathcal{C}$ with a unique
singular point of type $A_1$, lying over the point $o \in B$.
Moreover, by construction, $\mathcal{C}$ is the conic bundle
associated with the map $\sigma_2 \colon S^2 V_1 \to V_2$, so
condition $(\mathcal{P}_1)$ of Theorem \ref{teo:genus 2} is
satisfied.

The relative cubic $\mathcal{G}$ belongs to the linear system
$|\mO_{\mathbb{P}(V_2)}(3) \otimes \pi_2^* \mO_B(-4o - 2\tau)| =
|3D_o + 6F_o - 6F_o| = |3D_o|$. By \cite{CaCi93}, the linear
system $|3 D_o|$ is base point free, hence its restriction to
$\mathcal{C}$ is base point free too. This implies that a general
complete intersection of the form $\mathcal{G} \cap \mathcal{C}$
is smooth and does not contain the unique singular point of
$\mathcal{C}$. Thus condition $(\mathcal{P}_2)$ is also satisfied,
and consequently $\mathcal{M}_\textrm{I}$ is not empty.
 \end{proof}

\begin{proposition}
The stratum $\mathcal{M}_{\emph{IIa}}$ has dimension at most $12$.
\end{proposition}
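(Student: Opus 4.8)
The plan is to bound the dimension of the stratum by means of the surjection $\Phi \colon \mathcal{D} \lr \mathcal{M}$ of Remark \ref{rem.3.2}, so that $\dim \mathcal{M}_{\mathrm{IIa}} \leq \dim \mathcal{D}$, where $\mathcal{D}$ is the space of admissible $5$-ples $(B, V_1, \tau, \xi, w)$ falling into case $\mathrm{(IIa)}$. I count the parameters one factor at a time, exactly as in the proof of Proposition \ref{prop:I}. The moduli of the base curve $B$ contribute $1$; the divisor $\tau$ contributes $1$; and since $\xi$ lives in $\mathbb{P}(\textrm{Ext}^1(\mO_{\tau}, \textrm{S}^2 V_1)) = \mathbb{P}^2$ by \eqref{eq:ext}, it contributes at most $2$. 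The first gain over case $\mathrm{(I)}$ comes from $V_1$: the defining conditions $\mO_B(4o-4p) = \mO_B$ and $\mO_B(2o-2p) \neq \mO_B$ force $\mO_B(p-o)$ to be a $4$-torsion but not $2$-torsion element of $\textrm{Pic}^0(B)$, hence $p$ is one of finitely many points; so $V_1$ contributes $0$ parameters instead of the $1$ available in general.

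It remains to count the parameters coming from $w \in \mathbb{P}H^0(\widetilde{A}_6)$, and this is the main computation. Using $V_2(-2o) = F_2(2o-2p) \oplus \mO_B(\tau)$, that is $V_2 = F_2(4o-2p) \oplus \mO_B(2o+\tau)$, and the sequence \eqref{A6s} with $G_1 = V_2(-2\tau)$ and $G_2 = \textrm{S}^3 V_2 \otimes \mO_B(-4o-2\tau)$, I expand $\textrm{S}^3 V_2 = \textrm{S}^3 A \oplus (\textrm{S}^2 A \otimes B) \oplus (A \otimes B^{\otimes 2}) \oplus B^{\otimes 3}$ with $A = F_2(4o-2p)$ and $B = \mO_B(2o+\tau)$. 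Proposition \ref{prop:vec-ell}$(ii)$ gives $\textrm{S}^2 F_2 = F_3$ and $\textrm{S}^3 F_2 = F_4$, so that after the twist by $\mO_B(-4o-2\tau)$ the bundle $G_2$ becomes $F_4 \otimes L_0 \oplus F_3(6o-4p-\tau) \oplus F_2(4o-2p) \oplus \mO_B(2o+\tau)$, where $L_0 = \mO_B(8o-6p-2\tau)$ has degree $0$; similarly $G_1 = F_2 \otimes L_0 \oplus \mO_B(2o-\tau)$, the identity $L_0 = \mO_B(4o-2p-2\tau)$ being forced by $\mO_B(4o-4p) = \mO_B$. Proposition \ref{cohomology-fibrati} together with Riemann--Roch on the positive-degree summands then gives, for $L_0 \neq \mO_B$, the values $h^0(G_2) = 0 + 3 + 4 + 3 = 10$ and $h^0(G_1) = 0 + 1 = 1$, $h^1(G_1) = 0$; hence by \eqref{coker} we obtain $h^0(\widetilde{A}_6) \leq 9$, so $w$ contributes at most $8$.

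Summing, $\dim \mathcal{D} \leq 1 + 0 + 1 + 2 + 8 = 12$, which is the desired bound. The one point requiring care is the special locus where $L_0 = \mO_B$: there the summands $F_4 \otimes L_0$ and $F_2 \otimes L_0$ acquire sections (and $h^1(G_1)$ jumps to $1$), raising the estimate to $h^0(\widetilde{A}_6) \leq 11 - 2 + 1 = 10$; but the condition $L_0 = \mO_B$ cuts $\tau$ down to finitely many values, so this locus contributes at most $1 + 0 + 0 + 2 + 9 = 12$ as well, and the global bound $12$ is preserved. I expect the only real obstacle to be the bookkeeping in the decomposition of $\textrm{S}^3 V_2$ and the correct identification of which line-bundle summands are trivial; once that is settled, the parameter count is immediate, and no nonemptiness or smoothness argument is needed for a mere upper bound.
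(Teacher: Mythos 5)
Your proposal is correct and follows essentially the same route as the paper: the paper likewise notes that $\mO_B(4o-4p)=\mO_B$ kills the parameters for $V_1$, decomposes $G_1$ and $G_2$ in \eqref{A6s} using $\textrm{S}^2F_2=F_3$, $\textrm{S}^3F_2=F_4$ (your $L_0$ is the paper's $\mO_B(2p-2\tau)$ after applying the $4$-torsion relation), and splits into the same two cases $\mO_B(2p-2\tau)\neq\mO_B$ (giving $h^0(\widetilde{A}_6)=9$, total $12$) and $\mO_B(2p-2\tau)=\mO_B$ (giving $h^0(\widetilde{A}_6)\leq 10$ by \eqref{coker} but losing the parameter for $\tau$, again total $12$). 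The bookkeeping and the final bound agree exactly with the paper's proof.
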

\begin{proof}
In case (IIa) we have $\mO_B(4o-4p)=\mO_B$, so there are no
parameters for $V_1$. The vector bundle $\widetilde{A}_6$ fits
into the short exact sequence
\begin{equation*}
0 \lr G_1 \lr G_2 \lr \widetilde{A}_6 \lr 0,
\end{equation*}
where
\begin{equation*}
\begin{split}
G_1 & = F_2(2p - 2\tau) \oplus  \mO_B(2o - \tau), \\
G_2 & = \textrm{S}^3 F_2(2p-2\tau) \oplus \textrm{S}^2F_2(2o-\tau)
\oplus F_2(2p) \oplus \mO_B(2o+\tau).
\end{split}
\end{equation*}
By Proposition \ref{prop:vec-ell} we have $\textrm{S}^2 F_2 = F_3,
\;\textrm{S}^3 F_2 = F_4$.
Now there are two possibilities. \\

$\bullet$ $\mO_B(2p - 2\tau) \neq \mO_B$. In this case
\begin{equation*}
h^0(G_1)=1, \quad h^1(G_1)=0, \quad h^0(G_2)=10,
\end{equation*}
hence $h^0(\widetilde{A}_6) = h^0(G_2) - h^0(G_1)=9$. We have $1$
parameter for $B$, $2$ parameters for $\xi$, $1$ parameter for
$\tau$ and $8$ parameters from $\mathbb{P}H^0(\widetilde{A}_6)$. \\

$\bullet$  $\mO_B(2p - 2 \tau) = \mO_B$. In this case
\begin{equation*}
h^0(G_1)=2, \quad h^1(G_1)=1, \quad h^0(G_2)=11,
\end{equation*}
hence $h^0(\widetilde{A}_6) \leq 10$ by \eqref{coker}. We have $1$
parameter for $B$, $2$ parameters for $\xi$, no parameters for
$\tau$ and $V_1$ and at most $9$
parameters from $\mathbb{P}H^0( \widetilde{A}_6)$. \\

Summing up, we conclude that $\mathcal{M}_{\textrm{IIa}}$ has
dimension at most $12$.
\end{proof}

\begin{proposition}\label{prop:IIb}
The stratum $\mathcal{M}_{\emph{IIb}}$ has dimension at most $12$.
\end{proposition}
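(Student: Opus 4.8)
The plan is to carry out, in case (IIb), the same parameter count used for $\mathcal{M}_{\textrm{IIa}}$. The hypothesis $\mO_B(4o-4p)=\mO_B$ again restricts $p$ to a finite set of points, so $V_1$ contributes no parameters; we are thus left with $1$ parameter for $B$, $2$ parameters for $\xi \in \mathbb{P}\,\textrm{Ext}^1(\mO_{\tau}, \textrm{S}^2 V_1)$, the parameters for $\tau$, and $h^0(\widetilde{A}_6)-1$ parameters from $\mathbb{P}H^0(\widetilde{A}_6)$. Everything therefore reduces to bounding $h^0(\widetilde{A}_6)$.

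To this end I would make the sequence \eqref{A6s} explicit. Since $\det V_1=\mO_B(2o)$ and $V_2=\big(E_{\tau}(2,1)\oplus\mO_B\big)(2o)$, a direct substitution gives $G_1=V_2(-2\tau)=E_{\tau}(2,1)(2o-2\tau)\oplus\mO_B(2o-2\tau)$. For $G_2=\textrm{S}^3 V_2\otimes\mO_B(-4o-2\tau)$ I would expand $\textrm{S}^3\big(E_{\tau}(2,1)\oplus\mO_B\big)$ summand by summand and insert the formulas $\textrm{S}^2 E_{\tau}(2,1)=\bigoplus_{i=1}^3 L_i(\tau)$ and $\textrm{S}^3 E_{\tau}(2,1)=E_{\tau}(2,1)(\tau)^{\oplus 2}$ from Proposition \ref{prop:vec-ell}. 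After collecting the twists this gives
\begin{equation*}
G_2 = E_{\tau}(2,1)(2o-\tau)^{\oplus 2}\oplus\bigoplus_{i=1}^3 L_i(2o-\tau)\oplus E_{\tau}(2,1)(2o-2\tau)\oplus\mO_B(2o-2\tau).
\end{equation*}

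The cohomology is then read off from Proposition \ref{cohomology-fibrati} together with Riemann--Roch: each summand $E_{\tau}(2,1)(2o-\tau)$ is indecomposable of degree $3$, so $h^0=3$ and $h^1=0$; each $L_i(2o-\tau)$ is a degree $1$ line bundle with $h^0=1$; and $E_{\tau}(2,1)(2o-2\tau)$ is $E_{\tau}(2,1)$ twisted by a degree $0$ bundle, so $h^0=1$, $h^1=0$ by Proposition \ref{cohomology-fibrati}$(i)$. The only summand whose cohomology can jump is $\mO_B(2o-2\tau)$, and it produces two subcases exactly as for $\mathcal{M}_{\textrm{IIa}}$. If $\mO_B(2o-2\tau)\neq\mO_B$ then $h^0(G_2)=10$, $h^0(G_1)=1$, $h^1(G_1)=0$, so $h^0(\widetilde{A}_6)\leq 9$ by \eqref{coker}; with one parameter for $\tau$ the count is $1+2+1+8=12$. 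If $\mO_B(2o-2\tau)=\mO_B$ then $\tau$ is forced to be one of the finitely many $2$-torsion points (no parameter), while $h^0(G_2)=11$, $h^0(G_1)=2$, $h^1(G_1)=1$, so $h^0(\widetilde{A}_6)\leq 10$, and the count is $1+2+0+9=12$.

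I expect the only delicate part to be the careful bookkeeping of the twists in $G_2$ and the observation that $\mO_B(2o-2\tau)$ is the unique summand responsible for a jump in cohomology --- in particular that the pieces built from $E_{\tau}(2,1)$ remain cohomologically rigid for every $\tau$, since they are either of positive degree or twists of $E_{\tau}(2,1)$ by a degree $0$ bundle. Granting this, both subcases yield dimension at most $12$, which is the assertion.
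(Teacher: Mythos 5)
Your proof is correct, but it follows a different route from the paper's at the key step. The paper does not use the inequality \eqref{coker} here at all: arguing as in \cite[Lemma 6.14]{CaPi06}, it observes that the component of the map $i_3$ landing in the summand $(W\oplus\mO_B)(2o-2\tau)$ of $G_2$ (where $W=E_\tau(2,1)$) is the identity, so the sequence \eqref{A6s} \emph{splits} and one gets the exact value
\begin{equation*}
\widetilde{A}_6=(\textrm{S}^3W\oplus \textrm{S}^2W)(2o-2\tau)=\Bigl(W\oplus W\oplus\bigoplus_{i=1}^3L_i\Bigr)(2o-\tau),
\end{equation*}
whence $h^0(\widetilde{A}_6)=9$ uniformly in $\tau$ (all summands have positive degree, so no jumping), and a single count $1+0+2+1+8=12$ finishes the proof with no case division. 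You instead keep the troublesome degree-zero summand $\mO_B(2o-2\tau)$ inside $G_1$ and $G_2$ and control it with the coarser bound \eqref{coker}, splitting into the two subcases $\mO_B(2o-2\tau)\neq\mO_B$ (bound $9$, one parameter for $\tau$) and $\mO_B(2o-2\tau)=\mO_B$ (bound $10$, no parameter for $\tau$) --- exactly the technique the paper reserves for the strata where the splitting is unavailable, such as $\mathcal{M}_{\textrm{IIa}}$, $\mathcal{M}_{\textrm{IIIa}}$ and $\mathcal{M}_{\textrm{VIIa}}$. Your decomposition of $G_2$, the cohomology of each summand, and both parameter counts check out, so the argument is sound. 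What you lose relative to the paper is sharpness: the splitting argument pins down $\widetilde{A}_6$ and gives $h^0(\widetilde{A}_6)=9$ exactly, a fact the paper reuses verbatim in the proof of Proposition \ref{prop:IIb}'s companion case $\mathcal{M}_{\textrm{IIIc}}$ (``As in the proof of \ref{prop:IIb}, $h^0(\widetilde{A}_6)=9$''); your version would require rerunning the two-case estimate there. What you gain is independence from the external lemma of Catanese--Pignatelli.
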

\begin{proof}
Set $ W =  E_{\tau} (2, \, 1)$; then $ V_2(-2o) = W \oplus \mO_B$
and tensoring the exact sequence \eqref{A6} with $\mO_B(-6o)$ we
obtain
\begin{equation} \label{sequence:IIb}
0 \lr W \oplus  \mO_B \stackrel{i_3} \lr \big(\textrm{S}^3 W
\oplus \textrm{S}^2 W \big) \oplus ( W \oplus \mO_B)  \lr
A_6(-6o) \lr 0.
\end{equation}
Arguing as in \cite[Lemma 6.14]{CaPi06}, we see that the second
component of the map $i_3$ is actually the identity, hence the
exact sequence \eqref{sequence:IIb} splits, giving
\begin{equation*}
\widetilde{ A}_6= A_6(-4o -2 \tau)= (\textrm{S}^3 W \oplus
\textrm{S}^2 W ) (2o-2 \tau).
\end{equation*}
By Proposition \ref{prop:vec-ell} this in turn implies
\begin{equation*}
\widetilde{ A}_6 = \bigg( W \oplus W \oplus \bigoplus_{i=1}^3  L_i
\bigg)(2o- \tau),
\end{equation*}
hence $h^0(\widetilde{ A}_6)=9$. We have $1$ parameter for $B$, no
parameters for $V_1$, $2$ parameters for $\xi$, $1$ parameter for
$\tau$ and $8$ parameters from $\mathbb{P}H^0(\widetilde{ A}_6)$.
Therefore $\mathcal{M}_{\textrm{IIb}}$ has dimension at most $12$.

The fact that it is nonempty can be proven as in case
$\mathcal{M}_{\textrm{I}}$ (using an isogeny of degree $2$ instead
of $3$); the details are left to the reader.
\end{proof}

\begin{proposition} \label{VIb}
The stratum $\mathcal{M}_{\emph{IIc}}$ has dimension at most $11$.
\end{proposition}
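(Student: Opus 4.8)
The plan is to mimic the dimension counts carried out for the previous strata, so I would begin by recalling the data defining case (IIc): we have $m=1$, $\mO_B(4o-4p)=\mO_B$, $\mO_B(2o-2p)\neq\mO_B$, and
\[
V_2(-2o)=\mO_B(2o-2p)\oplus \mO_B \oplus \mO_B(\tau+2p-2o).
\]
First I would assemble the short exact sequence \eqref{A6s} for this specific $V_2$, computing $G_1=(\det V_1)^2\otimes V_2$ and $G_2=\textrm{S}^3 V_2$ twisted by $(\det V_1\otimes \mO_B(\tau))^{-2}$, exactly as in the proofs of Propositions for $\mathcal{M}_{\textrm{IIa}}$ and $\mathcal{M}_{\textrm{IIb}}$. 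Since $V_2(-2o)$ is now a direct sum of three line bundles, $\widetilde{A}_6$ splits as a direct sum of ten line bundles coming from $\textrm{S}^3$ of this rank $3$ bundle, and I would read off $h^0(\widetilde{A}_6)$ by counting how many of these summands have positive degree, using that $\mO_B(2o-2p)\neq\mO_B$ but $\mO_B(4o-4p)=\mO_B$ to keep track of which twists are trivial.

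The key point is that the constraint $m=1$ kills one parameter of $\xi$ compared with the $m=0$ cases, and the constraint $\mO_B(4o-4p)=\mO_B$ removes the modulus for $V_1$; this is precisely what should drop the total count from $12$ down to $11$. Concretely, I would tabulate: $1$ parameter for $B$; no parameters for $V_1$ (since $\mO_B(4o-4p)=\mO_B$ fixes $p$ up to finitely many choices); $1$ parameter for $\xi$ (because one of $\bar f_1,\bar f_2,\bar f_3$ is forced to vanish, leaving a $2$-dimensional extension space of which we projectivize, or more directly because $m=1$); $1$ parameter for $\tau$; and then the sections from $\mathbb{P}H^0(\widetilde{A}_6)$. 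Adding these and using \eqref{coker} to bound $h^0(\widetilde{A}_6)$ from above should yield the desired bound of $11$.

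The main obstacle I anticipate is getting the $\xi$-count right, since the relation between the integer $m$ and the effective number of moduli of $\xi$ is subtle: the extension class lives in $\textrm{Ext}^1(\mO_\tau,\textrm{S}^2V_1)\cong\mathbb{C}^3$ modulo $\textrm{Aut}_{\mO_B}(\mO_\tau)\cong\mathbb{C}^*$, so one must be careful about whether setting one $\bar f_i=0$ reduces the dimension by one before or after quotienting by the scaling action. I would resolve this by arguing, as in \cite{CaPi06}, that fixing $m=1$ confines $\xi$ to a proper linear subspace and then projectivizing, so that the effective number of $\xi$-parameters is one rather than two. A secondary subtlety is the exact value of $h^0(\widetilde{A}_6)$ when some twists degenerate because of $\mO_B(4o-4p)=\mO_B$; here I would invoke Proposition \ref{cohomology-fibrati} to compute the cohomology of each line-bundle summand precisely, being careful to use \eqref{coker} as an inequality whenever $h^1(G_1)\neq 0$. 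Once these counts are pinned down, the conclusion that $\dim\mathcal{M}_{\textrm{IIc}}\leq 11$ follows immediately, and this stratum can therefore be disregarded in the determination of the irreducible components.
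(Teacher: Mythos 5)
Your counts for $B$, $V_1$, $\xi$ and $\tau$ match the paper's (in particular, one parameter for $\xi$ and none for $V_1$ is correct), but the plan for handling $\widetilde{A}_6$ contains a genuine gap, and it surfaces exactly in the degenerate subcases you flag at the end. First, a structural slip: $\widetilde{A}_6$ has rank $7$, not $10$ --- it is the quotient of the rank-$10$ bundle $G_2$ by the rank-$3$ bundle $G_1$ in \eqref{A6s} --- and a priori it need not split at all; a quotient of a direct sum of line bundles can be nonsplit, which is precisely why the splitting has to be proved rather than assumed. Second, and more seriously, the inequality \eqref{coker} is too weak to give the statement. In case (IIc) one has (using $\mO_B(4o-4p)=\mO_B$) $G_1 \cong \mO_B(2p-2\tau)\oplus\mO_B(2o-2\tau)\oplus\mO_B(2p-\tau)$, and the four degree-zero summands of $G_2$ are two copies each of $\mO_B(2p-2\tau)$ and $\mO_B(2o-2\tau)$. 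Whenever $\tau$ is special, i.e. $\mO_B(2p-2\tau)=\mO_B$ or $\mO_B(2o-2\tau)=\mO_B$ (these loci do occur inside the stratum), one gets $h^0(G_1)=2$, $h^1(G_1)=1$, $h^0(G_2)=12$, so \eqref{coker} only yields $h^0(\widetilde{A}_6)\leq 11$; your tally then reads $1$ (for $B$) $+\,1$ (for $\xi$) $+\,0$ (for $\tau$, now constrained to finitely many points) $+\,10$ (from $\mathbb{P}H^0(\widetilde{A}_6)$) $=12$. So the method as described proves only $\dim \mathcal{M}_{\textrm{IIc}}\leq 12$, which is not enough to let this stratum be disregarded.

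The missing idea is the one the paper uses: write $\sigma_2$ explicitly in global coordinates on $\mathbb{P}(V_1)$ and $\mathbb{P}(V_2)$. Because the rank of $\sigma_2$ drops exactly at the point $\tau$, the coefficient $a_{11}$ of $y_1^2$ in the equation of the conic bundle $\mathcal{C}$ is a \emph{nonzero constant}, and then the argument of \cite[Lemma 3.5]{Pi09} shows that the exact sequence \eqref{A6} actually splits. This identifies $\widetilde{A}_6$ with an explicit direct sum of seven line bundles, of which only $\mO_B(2p-2\tau)$ and $\mO_B(4p-2o-2\tau)\cong\mO_B(2o-2\tau)$ have degree zero, so $h^0(\widetilde{A}_6)$ equals $9$ in general and equals $10$ exactly when $\tau$ is one of the special points above (both cannot be trivial simultaneously, since $\mO_B(2o-2p)\neq\mO_B$). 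The exact value, rather than the upper bound from \eqref{coker}, is what makes every subcase total $1+1+1+8=11$ or $1+1+0+9=11$. In short: to close the argument you must replace the coker inequality by the splitting of \eqref{A6}, obtained from the explicit matrix of $\sigma_2$, in the special-$\tau$ subcases.
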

\begin{proof}
In case (IIc) we have $\mO_B(2o-2p)= \mO_B(2p- 2o)$, with
$\mO_B(2o-2p) \neq \mO_B$, and the map $\sigma_2$ has the form
\begin{equation*}
\sigma_2 \colon \mO_B(2p) \oplus \mO_B(2o) \oplus \mO_B(4o-2p) \lr
\mO_B(2p) \oplus \mO_B(2o) \oplus \mO_B(4o - 2p + \tau).
\end{equation*}
Take global coordinates
\begin{equation*}
x_0 \in H^0 \big(\mO_{\mathbb{P}(V_1)}(1)  \otimes
\pi_1^*\mO_B(-p) \big), \quad x_1 \in H^0
\big(\mO_{\mathbb{P}(V_1)}(1)  \otimes \pi_1^*\mO_B(-2o+p) \big)
\end{equation*}
on the fibres of $\mathbb{P}(V_1)$ and, similarly, global
coordinates $y_0$, $y_1$, $y_2$ on the fibres of
$\mathbb{P}(V_2)$. With respect to these coordinates, $\sigma_2$
is given by
\begin{equation*}
\left\{
   \begin{array}{l}
     \sigma_2(x_0^2) = a_{00}y_0 + a_{02} y_2 \\
     \sigma_2(x_0x_1) = a_{11} y_1 + a_{12} y_2 \\
     \sigma_2(x_1^2)= a_{20}y_0 + a_{22} y_2,
   \end{array}
 \right.
 \end{equation*}
where $a_{00}, \, a_{11}, \, a_{20} \in \mathbb{C}$, $a_{02}, \,
a_{22} \in H^0(\mO_B(\tau))$, $a_{12} \in H^0(\mO_B(\tau + 2o - 2p
))$. Therefore the equation of the conic bundle
$\mathcal{C}\subset \mathbb{P}(V_2)$ is
\begin{equation*}
(a_{00}y_0 + a_{02} y_2)(a_{20}y_0 + a_{22} y_2)-(a_{11} y_1 +
a_{12} y_2)^2 =0.
\end{equation*}
Moreover, since the rank of $\sigma_2$ drops exactly at the point
$\tau$, it follows $a_{11} \neq 0$. This means that the
coefficient of the term $y_1^2$ is a non-zero constant, hence the
same argument of \cite[Lemma 3.5]{Pi09} shows that exact sequence
\eqref{A6} splits. Therefore we obtain
\begin{equation*} \label{A6-split-VIb}
\begin{split}
\widetilde{  A}_6 &=\mO_B(2p - 2 \tau) \oplus \mO_B(4o-2p +\tau)
\oplus \mO_B(4p- 2o- 2\tau) \\ &\oplus \mO_B(2p - \tau) \oplus
\mO_B(4o - 2p) \oplus \mO_B(6o-4p) \oplus \mO_B(2o- \tau),
\end{split}
\end{equation*}
so
\begin{equation*}
h^0(\widetilde{A}_6) = \left\{%
 \begin{array}{ll}
    10 & \hbox{if either $\mO(2p-2 \tau)=\mO_B$ or $\mO_B(4p - 2o - 2 \tau)=\mO_B$;} \\
    \;  9 & \hbox{otherwise.} \\
\end{array}%
\right.
\end{equation*}
So we have $1$ parameter for $B$, $1$ parameter for $\xi$, no
parameters (resp. $1$ parameter) for $\tau$ and $9$ parameters
(resp. $8$ parameters) from $\mathbb{P}H^0(\widetilde{A}_6)$. It
follows that $\mathcal{M}_{\textrm{IIc}}$ has dimension at most
$11$.
\end{proof}

\begin{proposition}
We have $\mathcal{M}_{\emph{IIIa}}=\mathcal{M}_{\emph{IIIb}}$.
Moreover the dimension of this stratum is at most $12$.
\end{proposition}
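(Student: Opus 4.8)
The plan is to treat the two assertions separately. First I would prove $\mathcal{M}_{\textrm{IIIa}} = \mathcal{M}_{\textrm{IIIb}}$ by exploiting the symmetry of the decomposable case. The key observation is that the bundle $V_1 = \mO_B(p) \oplus \mO_B(2o-p)$ is unchanged when we exchange its two summands, i.e. when we replace $p$ by $2o - p$. Writing $\textrm{S}^2 V_1 = P_1 \oplus P_2 \oplus P_3$ with $P_1 = \mO_B(2p)$, $P_2 = \mO_B(2o)$, $P_3 = \mO_B(4o - 2p)$, this substitution interchanges $P_1$ and $P_3$ and fixes $P_2$, hence it interchanges the components $\bar f_1, \bar f_3$ of the extension class $\xi = (\bar f_1, \bar f_2, \bar f_3)$ while fixing $\bar f_2$. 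Reading off the proof of Proposition \ref{V2-decomp}, case (IIIa) is the one in which $\bar f_1 = 0$, so that $P_1(-2o) = \mO_B(2p - 2o)$ splits off $V_2(-2o)$, whereas case (IIIb) is the one in which $\bar f_3 = 0$; concretely, substituting $2o - p$ for $p$ in the expression $V_2(-2o) = E_{\tau + 2o - 2p}(2,1) \oplus \mO_B(2p - 2o)$ of (IIIa) yields exactly $E_{\tau + 2p - 2o}(2,1) \oplus \mO_B(2o - 2p)$, the expression of (IIIb). Since the surface produced by Theorem \ref{teo:genus 2} depends only on the isomorphism class of the $5$-tuple $(B, V_1, \tau, \xi, w)$, and this is insensitive to the ordering of the summands of $V_1$, the two sets of construction data give the same surfaces; therefore the strata coincide.

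Having identified the two strata, it remains to bound $\dim \mathcal{M}_{\textrm{IIIa}}$, and here I would proceed exactly as in the preceding propositions. Set $W := E_{\tau + 2o - 2p}(2,1)$ and $N := \mO_B(2p - 2o)$, so that $V_2(-2o) = W \oplus N$, and use $\det V_1 = \mO_B(2o)$. Tensoring \eqref{A6} by $(\det V_1 \otimes \mO_B(\tau))^{-2}$ as in \eqref{A6s} gives $0 \to G_1 \to G_2 \to \widetilde A_6 \to 0$ with
\begin{equation*}
G_1 = V_2 \otimes \mO_B(-2\tau) = W(2o - 2\tau) \oplus N(2o - 2\tau), \qquad G_2 = \textrm{S}^3(W \oplus N) \otimes \mO_B(2o - 2\tau).
\end{equation*}
Decomposing $\textrm{S}^3(W \oplus N) = \textrm{S}^3 W \oplus (\textrm{S}^2 W \otimes N) \oplus (W \otimes N^2) \oplus N^3$ and evaluating each summand by means of Propositions \ref{cohomology-fibrati} and \ref{prop:vec-ell}, a routine computation gives, for a general choice of the data, $h^0(G_1) = 1$, $h^1(G_1) = 0$ and $h^0(G_2) = 10$, whence $h^0(\widetilde A_6) = 9$ by \eqref{coker}. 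The parameter count is then $1$ for $B$, $1$ for $V_1$, $1$ for $\tau$, $1$ for $\xi$ (because $m = 1$) and $h^0(\widetilde A_6) - 1 = 8$ for $w \in \mathbb{P} H^0(\widetilde A_6)$, for a total of $12$.

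The point requiring care — and what I expect to be the main technical obstacle — is the behaviour on the special subloci where a degree-$0$ summand of $G_1$ or $G_2$ becomes trivial, for instance $N(2o - 2\tau) = \mO_B(2p - 2\tau)$ or $N^3 \otimes \mO_B(2o - 2\tau) = \mO_B(6p - 4o - 2\tau)$. On such a locus $h^0(\widetilde A_6)$ may jump by one, but each triviality condition is one equation on $(p, \tau)$, hence cuts out a divisor in the parameter space that removes a parameter from $\tau$ or $V_1$; thus the dimension of these boundary strata still does not exceed $12$. Combining the generic count with these degenerate cases yields $\dim \mathcal{M}_{\textrm{IIIa}} \leq 12$, which completes the proof.
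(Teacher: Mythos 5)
Your proposal is correct and follows essentially the same route as the paper: the identification $\mathcal{M}_{\textrm{IIIa}}=\mathcal{M}_{\textrm{IIIb}}$ via the substitution $p \mapsto 2o-p$ (which swaps $P_1$ and $P_3$, hence $\bar{f}_1=0$ with $\bar{f}_3=0$), the same exact sequence $0 \to G_1 \to G_2 \to \widetilde{A}_6 \to 0$ with $G_1=(W\oplus N)(2o-2\tau)$ and $G_2 = \textrm{S}^3(W\oplus N)(2o-2\tau)$, and the same generic count $1+1+1+1+8=12$. The only difference is bookkeeping: where the paper explicitly enumerates the four cases according to triviality of $N(2o-2\tau)$ and $N^3(2o-2\tau)$, you compress them into a jump-versus-codimension trade-off, which is legitimate here because these are the only degree-zero summands, a triviality in $G_2$ raises the bound \eqref{coker} by exactly one while a triviality in $G_1$ raises $h^0(G_1)$ and $h^1(G_1)$ simultaneously and so leaves it unchanged, and the two conditions are independent divisors in the $(p,\tau)$-parameters.
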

\begin{proof}
Case (IIIb) is obtained from case (IIIa) by considering $2o-p$
instead of $p$; this shows that the corresponding strata coincide.
So it is sufficient to consider case (IIIa); set
\begin{equation*}
 W :=E_{\tau + 2o - 2p}, \quad L:=\mO_B(2p-2o).
\end{equation*}
Then we have $ V_2(-2o)= W \oplus  L$ and tensoring the exact
sequence \eqref{A6} with $\mO_B(-6o)$ we obtain
\begin{equation*}
0 \lr W \oplus   L \stackrel{i_3} \lr \textrm{S}^3 W \oplus
(\textrm{S}^2 W \otimes  L) \oplus ( W \otimes  L^2) \oplus  L^3
\lr   A_6(-6o) \lr 0.
\end{equation*}
Hence $\widetilde{ A}_6 =  A_6(-4o-2 \tau)$ fits into the short
exact sequence
\begin{equation*}
0 \lr  G_1 \lr  G_2 \lr \widetilde{ A}_6 \lr 0,
\end{equation*}
where
\begin{equation*}
 G_1 = ( W \oplus  L)(2o-2 \tau), \quad  G_2 = \big(\textrm{S}^3 W \oplus (\textrm{S}^2 W \otimes  L)
\oplus ( W \otimes  L^2) \oplus  L^3 \big) (2o - 2 \tau).
\end{equation*}
There are several possibilities. \\  \\
$\bullet$ $ L(2o-2 \tau) \neq \mO_B, \,   L^3(2o-2 \tau) \neq
\mO_B$. In this case
\begin{equation*}
h^0( G_1)=1, \quad h^1( G_1)=0, \quad h^0( G_2)=10,
\end{equation*}
hence $h^0(\widetilde{A}_6)=9$. We have $1$ parameter for $B$, $1$
parameter for $V_1$, $1$ parameter for $\xi$, $1$ parameter for
$\tau$ and $8$
parameters from $\mathbb{P}H^0(\widetilde{  A}_6)$. \\ \\
$\bullet$ $ L(2o-2 \tau) \neq \mO_B, \,   L^3(2o- 2 \tau) =
\mO_B$. In this case
\begin{equation*}
h^0( G_1)=1, \quad h^1( G_1)=0, \quad h^0( G_2)=11,
\end{equation*}
hence $h^0(\widetilde{A}_6)=10$. We have $1$ parameter for $B$,
$1$ parameter for $V_1$, $1$ parameter for $\xi$, no parameters
for $\tau$ and $9$
parameters from $\mathbb{P}H^0(\widetilde{  A}_6)$. \\
\\
$\bullet$ $L(2o-2 \tau) = \mO_B, \,L^3(2o - 2 \tau) \neq \mO_B$.
We have
\begin{equation*}
h^0(G_1)=2, \quad h^1(G_1)=1, \quad h^0(G_2)=10,
\end{equation*}
hence $h^0(\widetilde{A}_6) \leq 9$ by \eqref{coker}. We have $1$
parameter for $B$, $1$ parameter for $V_1$, $1$ parameter for
$\xi$, no parameters for $\tau$ and at most $8$ parameters from
$\mathbb{P}H^0(\widetilde{  A}_6)$.
\\ \\
$\bullet$ $L(2o-2 \tau) = \mO_B, \,L^3(2o - 2 \tau) = \mO_B$.
Notice that this implies $L^2 = \mO_B$, so there are no parameters
for $V_1$.  We obtain
\begin{equation*}
h^0(G_1)=2, \quad h^1(G_1)=1, \quad h^0(G_2)=11,
\end{equation*}
hence $h^0(\widetilde{A}_6) \leq 10$ by \eqref{coker}. We have $1$ parameter for $B$, $1$ parameter for $\xi$, no parameters for $\tau$ and at most $9$ parameters from $\mathbb{P}H^0(\widetilde{  A}_6)$. \\ \\
Summing up, we conclude that the dimension of the stratum
$\mathcal{M}_{\textrm{IIIa}}=\mathcal{M}_{\textrm{IIIb}}$ is at
most $12$.
\end{proof}

\begin{proposition}
The stratum $\mathcal{M}_{\emph{IIIc}}$ has dimension at most
$12$.
\end{proposition}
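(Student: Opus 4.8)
The plan is to exploit the fact that in case \textrm{(IIIc)} the bundle $V_2(-2o) = E_{\tau}(2, \, 1) \oplus \mO_B$ is exactly the one that appeared in case \textrm{(IIb)}; I would therefore compute $\widetilde{A}_6$ by splitting the sequence \eqref{A6} and then count parameters as in the previous strata. First I would set $W := E_{\tau}(2, \, 1)$, so that $V_2 = W(2o) \oplus \mO_B(2o)$, and introduce fibre coordinates $y_0, \, y_1$ on the $W(2o)$-summand together with $y_2$ on the $\mO_B(2o)$-summand.

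Next I would use that, by Proposition \ref{V2-decomp}, case \textrm{(IIIc)} is the one with $\bar{f}_2 = 0$ (so $m=1$), subject to $\mO_B(2o-2p) \neq \mO_B$ and $\mO_B(4o-4p) \neq \mO_B$. Writing $\sigma_2$ in the above coordinates, as in the proof of Proposition \ref{VIb}, the vanishing $\bar{f}_2 = 0$ means that $\sigma_2$ maps $P_2 = \mO_B(2o)$ isomorphically onto a direct summand of $V_2$, so that the $y_2$-coefficient of $\sigma_2(x_0 x_1)$ is a nonzero constant. On the other hand, the $y_2$-coefficients of $\sigma_2(x_0^2)$ and $\sigma_2(x_1^2)$ are sections of $\mO_B(2o-2p)$ and $\mO_B(2p-2o)$ respectively, and these vanish because both bundles are non-trivial of degree $0$. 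Hence $\sigma_2(x_0^2), \, \sigma_2(x_1^2)$ lie in the $W(2o)$-part and the coefficient of $y_2^2$ in the conic equation $\sigma_2(x_0^2)\sigma_2(x_1^2) - \sigma_2(x_0 x_1)^2$ is a nonzero constant. The argument of \cite[Lemma 3.5]{Pi09} then applies to show that \eqref{A6} splits, giving
\begin{equation*}
\widetilde{A}_6 = (\textrm{S}^3 W \oplus \textrm{S}^2 W)(2o - 2\tau) = \Big( W \oplus W \oplus \bigoplus_{i=1}^3 L_i \Big)(2o - \tau)
\end{equation*}
by Proposition \ref{prop:vec-ell}; a direct cohomology computation then yields $h^0(\widetilde{A}_6) = 9$, uniformly (with no further subcases, since the three $L_i$ are fixed $2$-torsion bundles independent of $p$).

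Finally I would count parameters. As the conditions $\mO_B(2o-2p) \neq \mO_B$ and $\mO_B(4o-4p) \neq \mO_B$ are open, the point $p$ stays free and $V_1$ contributes $1$ parameter; since $m=1$, the class $\xi$ contributes $1$ parameter. Adding $1$ parameter for $B$, $1$ for $\tau$, and $8$ from $\mathbb{P}H^0(\widetilde{A}_6)$, one obtains that $\mathcal{M}_{\textrm{IIIc}}$ has dimension at most $1+1+1+1+8 = 12$.

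The step I expect to be the main obstacle is the splitting of \eqref{A6}: everything hinges on checking that the $y_2^2$-coefficient of the conic is a nonzero constant, which requires pinning down the coordinate form of $\sigma_2$ and verifying \emph{both} that $\bar{f}_2 = 0$ makes the $y_2$-coefficient of $\sigma_2(x_0 x_1)$ a nonzero constant \emph{and} that the non-triviality of $\mO_B(2o-2p)$ kills the $y_2$-coefficients of $\sigma_2(x_0^2)$ and $\sigma_2(x_1^2)$.
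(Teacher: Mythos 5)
Your proof is correct and follows essentially the same route as the paper: the paper's proof of this proposition simply refers back to the case \textrm{(IIb)} computation, where the splitting of \eqref{A6} (there justified by the argument of \cite[Lemma 6.14]{CaPi06}, i.e.\ that the second component of $i_3$ is the identity) gives $\widetilde{A}_6 = (\mathrm{S}^3 W \oplus \mathrm{S}^2 W)(2o-2\tau)$, hence $h^0(\widetilde{A}_6)=9$, followed by the identical count $1+1+1+1+8=12$. Your justification of the splitting via the nonzero constant $y_2^2$-coefficient of the conic and \cite[Lemma 3.5]{Pi09} is the same mechanism the paper itself uses in cases \textrm{(IIc)} and \textrm{(V)}, so it is a correct, slightly more self-contained rendering of the same argument rather than a different one.
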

\begin{proof}
As in the proof of \ref{prop:IIb}, $h^0(\widetilde{A}_6) = 9$. We
have $1$ parameter for $B$, $1$ parameters for $V_1$, $1$ parameter
for $\xi$, $1$ parameter for $\tau$ and $8$ parameters from
$\mathbb{P}H^0( \widetilde{ A}_6)$. Therefore
$\mathcal{M}_{\textrm{IIIc}}$ has dimension at most $12$.
\end{proof}

\begin{proposition}
The strata $\mathcal{M}_{\emph{IVa}}$, $\mathcal{M}_{\emph{IVb}}$
have dimension at most $11$.
\end{proposition}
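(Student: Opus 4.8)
The plan is to follow exactly the same strategy used in the preceding propositions, namely to bound the dimension of each stratum by counting the parameters entering the associated $5$-tuple $(B,\,V_1,\,\tau,\,\xi,\,w)$ and then to subtract, where relevant, the dimension of the fibres of the map $\Phi \colon \mathcal{D} \lr \mathcal{M}$. The decisive new computation is the value of $h^0(\widetilde{A}_6)$ in cases (IVa) and (IVb). In both cases $m=2$, so two of the $\bar{f}_i$ vanish, and by Proposition \ref{V2-decomp} the bundle $V_2(-2o)$ splits completely as a direct sum of three line bundles; this is what makes the cohomology accessible.

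First I would write down $V_2(-2o)$ explicitly from Proposition \ref{V2-decomp}: in case (IVa) it is $\mO_B(2p-2o) \oplus \mO_B \oplus \mO_B(\tau + 2o - 2p)$, and in case (IVb) it is $\mO_B(2o-2p) \oplus \mO_B \oplus \mO_B(\tau + 2p - 2o)$. Since $V_2$ is a direct sum of line bundles, so are $\textrm{S}^3 V_2$ and the subbundle $(\det V_1)^2 \otimes V_2$ appearing in \eqref{A6}; I would check, exactly as in the proof of Proposition \ref{VIb}, that the relevant entry of $\sigma_2$ is a nonzero constant (the rank of $\sigma_2$ drops only at $\tau$, forcing the appropriate coefficient to be a unit), so that the sequence \eqref{A6} splits. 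This reduces $\widetilde{A}_6$ to an explicit direct sum of seven line bundles, tensored by $\mO_B(2o-\tau)$ or the appropriate twist, whose degrees I can read off directly.

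Having the explicit splitting of $\widetilde{A}_6$, I would compute $h^0(\widetilde{A}_6)$ by summing the contributions of the individual line bundles, using that a degree-$d$ line bundle on the elliptic curve $B$ has $h^0 = d$ for $d \geq 1$, $h^0 \in \{0,1\}$ for $d = 0$, and $h^0 = 0$ for $d < 0$. As in case (IIc), the total will jump by one when certain degree-zero summands become trivial (e.g.\ when $\mO_B(2p-2o)$ or its powers equal $\mO_B$), which simultaneously removes a parameter for $\tau$; the two effects cancel in the dimension count. The parameter tally is then: $1$ for $B$, no parameter for $V_1$ (its class is rigid up to the constraint imposed by $m=2$ and the torsion conditions), $1$ for $\xi$ (since $m=2$ leaves only a one-dimensional family of extension classes), at most $1$ for $\tau$, and the $\mathbb{P}H^0(\widetilde{A}_6)$ contribution, which I expect to be one less than in the $m \le 1$ cases. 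Summing these gives the bound $\leq 11$.

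The main obstacle I anticipate is bookkeeping rather than conceptual: I must be careful about the drop in the number of moduli for $\xi$ when $m=2$, since two of the three components $\bar{f}_i$ are forced to vanish, and I must verify that the splitting of \eqref{A6} genuinely holds in each subcase rather than assuming it by analogy. I would handle the latter by isolating the coefficient of the pure square $y_1^2$ (or its analogue) in the local equation of the conic bundle $\mathcal{C}$ and invoking that it is a nonvanishing constant, exactly as in the argument of \cite[Lemma 3.5]{Pi09} cited in the proof of Proposition \ref{VIb}; the former I would settle by a direct torsion analysis. Since both cases are structurally identical after relabelling, treating (IVa) in detail and indicating that (IVb) follows by the symmetric substitution $p \leftrightarrow 2o-p$ (as was done for \textrm{(IIIa)}=\textrm{(IIIb)}) should suffice to conclude the bound $\dim \mathcal{M}_{\textrm{IVa}}, \dim \mathcal{M}_{\textrm{IVb}} \leq 11$.
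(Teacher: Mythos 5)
Your overall route is the paper's own: the paper proves this proposition with the single remark that the argument of case (IIc) (Proposition \ref{VIb}) applies verbatim, and your plan --- split $V_2$ into line bundles, check that the equation of the conic bundle $\mathcal{C}$ has a unit coefficient on the pure square $y_1^2$ (in case (IVa) one may take $f_1=f_2=0$, so the conic is $f_0\,y_0y_2-y_1^2=0$), conclude via \cite[Lemma 3.5]{Pi09} that \eqref{A6} splits, and then count parameters --- is exactly that argument, with (IVb) following from (IVa) by the substitution $p \leftrightarrow 2o-p$.

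However, your parameter tally contains genuine errors; they happen to compensate, but as written the count does not establish the bound. First, $V_1$ contributes $1$ parameter, not $0$: in cases (IVa)/(IVb) the only conditions on $p$ are the open ones $\mO_B(2o-2p)\neq\mO_B$ and $\mO_B(4o-4p)\neq\mO_B$, so $p$ moves in a $1$-dimensional family; the torsion condition $\mO_B(4o-4p)=\mO_B$ that froze $V_1$ in case (IIc) is absent here, and ``$m=2$'' is a constraint on $\xi$, not on $p$. Second, $\xi$ contributes $0$ parameters, not $1$: when $m=2$ the admissible classes form a punctured line in $\textrm{Ext}^1(\mO_\tau,\textrm{S}^2V_1)\cong\mathbb{C}^3$, and one must still quotient by $\textrm{Aut}_{\mO_B}(\mO_\tau)\cong\mathbb{C}^*$, leaving a single orbit; this is forced by consistency with the paper's counts of $2$ parameters for $m=0$ (case (I)) and $1$ for $m=1$ (case (IIc)). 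Third, your expectation that $\mathbb{P}H^0(\widetilde{A}_6)$ contributes ``one less than in the $m\le 1$ cases'' is wrong: eliminating $y_1^2y_l$, $l=0,1,2$, leaves the seven monomials $y_0^3$, $y_0^2y_1$, $y_0^2y_2$, $y_0y_1y_2$, $y_0y_2^2$, $y_1y_2^2$, $y_2^3$, whose associated summands of $\widetilde{A}_6$ in case (IVa) are $\mO_B(6p-4o-2\tau)$, $\mO_B(4p-2o-2\tau)$, $\mO_B(2p-\tau)$, $\mO_B(2o-\tau)$, $\mO_B(4o-2p)$, $\mO_B(6o-4p)$, $\mO_B(\tau+8o-6p)$, of degrees $0,0,1,1,2,2,3$; hence $h^0(\widetilde{A}_6)=9$ generically and $10$ exactly when one of the two degree-zero summands is trivial --- a torsion condition which simultaneously kills the parameter for $\tau$ --- precisely as in case (IIc). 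The correct tally is therefore $1+1+0+1+8=11$ in the generic subcase and $1+1+0+0+9=11$ in the special one. Keep in mind that in an argument bounding $\dim\mathcal{M}_{\textrm{IVa}}$ by $\dim\mathcal{D}$, an under-count of $\mathcal{D}$ (which two of your three miscounts are) is not harmless: your conclusion comes out right only because the errors cancel.
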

\begin{proof}
The proof is the same as in case (IIc); the details are left to
the reader.
\end{proof}

\begin{proposition}
The stratum $\mathcal{M}_{\emph{IVc}}$ has dimension at most $11$.
\end{proposition}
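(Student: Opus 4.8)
The plan is to argue exactly as in the previous strata: by Catanese--Pignatelli's structure theorem (Theorem \ref{teo:genus 2}) the dimension of $\mathcal{M}_{\textrm{IVc}}$ is bounded above by the dimension of the parameter space $\mathcal{D}$ of admissible $5$-tuples $(B, V_1, \tau, \xi, w)$ giving surfaces in this stratum, so it suffices to compute $h^0(\widetilde{A}_6)$ in case (IVc) and then add up the parameters for $B$, $V_1$, $\xi$, $\tau$ and $\mathbb{P}H^0(\widetilde{A}_6)$.

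First I would make the map $\sigma_2$ explicit. In case (IVc) we have $\bar{f}_1 = \bar{f}_3 = 0$ and $\bar{f}_2 \neq 0$, so $P_1 = \mO_B(2p)$ and $P_3 = \mO_B(4o-2p)$ split off as direct summands of $V_2$, while the third summand is the cokernel of the inclusion $\mO_B(o-\tau) \to \mO_B(o) \oplus P_2$, which is the degree $3$ line bundle $\mO_B(2o+\tau)$. Choosing fibre coordinates $y_0, y_1, y_2$ on $\mathbb{P}(V_2)$ adapted to $V_2 = \mO_B(2p) \oplus \mO_B(4o-2p) \oplus \mO_B(2o+\tau)$, the map $\sigma_2$ reads $\sigma_2(x_0^2)=y_0$, $\sigma_2(x_1^2)=y_1$, $\sigma_2(x_0x_1)=c\,y_2$ with $c \in H^0(\mO_B(\tau)) \setminus \{0\}$, so by \eqref{eq:C} the conic bundle $\mathcal{C}$ has equation $y_0 y_1 - c^2 y_2^2 = 0$.

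The key observation is that the coefficient of $y_0 y_1$ in this equation is a nonzero constant. Arguing as in case (IIc), i.e. as in \cite[Lemma 3.5]{Pi09}, this forces the sequence \eqref{A6} to split: writing $M_1 = \mO_B(2p-2o)$, $M_2 = \mO_B(2o-2p)$, $M_3 = \mO_B(\tau)$, the map $i_3$ sends the generators $y_0, y_1, y_2$ to $(y_0y_1 - c^2 y_2^2)y_i$, whose leading monomials $y_0^2 y_1, y_0 y_1^2, y_0 y_1 y_2$ have constant coefficient $1$; hence the composition of $i_3$ with the projection of $\textrm{S}^3 V_2(-2o)$ onto the summands $M_1^2 M_2$, $M_1 M_2^2$, $M_1 M_2 M_3$ is an isomorphism. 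Therefore $\widetilde{A}_6$ is isomorphic to the direct sum of the seven remaining summands of $\textrm{S}^3 V_2(-2o)$, each twisted by $\mO_B(2o-2\tau)$, that is $M_1^3, M_2^3, M_1^2 M_3, M_2^2 M_3, M_1 M_3^2, M_2 M_3^2, M_3^3$; their degrees are $0,0,1,1,2,2,3$, so for a general choice of the data I would get $h^0(\widetilde{A}_6) = 0 + 0 + 1 + 1 + 2 + 2 + 3 = 9$.

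Finally I would count parameters: $1$ for $B$, $1$ for $V_1$ (the only restriction being the open condition $\mO_B(2o-2p) \neq \mO_B$), $0$ for $\xi$ (since $m=2$, the class $(\bar{f}_1, \bar{f}_2, \bar{f}_3)$ is projectively the single point $[0:1:0]$), $1$ for $\tau$, and $8$ from $\mathbb{P}H^0(\widetilde{A}_6)$, for a total of $11$. The main obstacle, and the point I would check carefully, is the behaviour on the two closed loci where one of the degree-$0$ summands $M_1^3(2o-2\tau) = \mO_B(6p-4o-2\tau)$ or $M_2^3(2o-2\tau) = \mO_B(8o-6p-2\tau)$ becomes trivial: there $h^0(\widetilde{A}_6)$ jumps to $10$, but each such locus is cut out by a linear equivalence of the form $2\tau \sim 6p-4o$ (resp. $2\tau \sim 8o-6p$) that kills the parameter for $\tau$, so the count again gives $1+1+0+0+9 = 11$. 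In all cases $\dim \mathcal{D} \leq 11$, whence $\dim \mathcal{M}_{\textrm{IVc}} \leq 11$.
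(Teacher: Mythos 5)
Your argument is correct, and it reaches the bound by a genuinely different route than the paper takes for this particular stratum. The paper never computes $\widetilde{A}_6$ exactly in case (IVc): it writes out the bundles $G_1$ and $G_2$ of the sequence \eqref{A6s} as explicit direct sums of three, respectively ten, line bundles and then applies the inequality \eqref{coker}, $h^0(\widetilde{A}_6) \leq h^0(G_2)-h^0(G_1)+h^1(G_1)$, together with a case analysis (described only as ``a tedious but elementary analysis of all possibilities'') over which degree-zero summands become trivial. You instead use $\bar{f}_1=\bar{f}_3=0$ to put $\sigma_2$ in the normal form $\sigma_2(x_0^2)=y_0$, $\sigma_2(x_1^2)=y_1$, $\sigma_2(x_0x_1)=c\,y_2$, observe that the conic $y_0y_1-c^2y_2^2=0$ has constant nonzero coefficient on the monomial $y_0y_1$, and run the splitting argument of \cite[Lemma 3.5]{Pi09}; this is precisely the method the paper applies to the neighbouring strata (IIc), (IVa), (IVb) and (V), but chose not to use here, and your verification that it still works when the constant coefficient sits on a mixed monomial rather than a square is sound (the projection of $i_3$ onto the summands $M_1^2M_2$, $M_1M_2^2$, $M_1M_2M_3$ is the identity, since $M_1M_2=\mO_B$). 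What your route buys is an exact value of $h^0(\widetilde{A}_6)$ rather than an upper bound: your list of seven surviving summands, of degrees $0,0,1,1,2,2,3$, is correct (it agrees with the ten summands of the paper's $G_2$ minus the three of $G_1$), so $h^0(\widetilde{A}_6)=9$ away from explicit jump loci and the parameter count is transparent. The paper's inequality needs less setup but forces one to track the degree-zero summands of $G_1$ as well, since they enter through $h^1(G_1)$, hence more subcases.

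One subcase to add to your write-up: you treat the loci where exactly one of $\mO_B(6p-4o-2\tau)$, $\mO_B(8o-6p-2\tau)$ is trivial, but not the locus where both are. There $h^0(\widetilde{A}_6)=11$; however, the two conditions $\mO_B(2\tau)=\mO_B(6p-4o)$ and $\mO_B(2\tau)=\mO_B(8o-6p)$ together force $\mO_B(12p)=\mO_B(12o)$, so both $\tau$ and $p$ are confined to finite sets and the count becomes $1+0+0+0+10=11$. With this case included, your conclusion $\dim \mathcal{M}_{\textrm{IVc}} \leq 11$ holds in all cases.
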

\begin{proof}
In case (IVc) the vector bundles $G_1$, $G_2$ in exact sequence
\eqref{A6s} are as follows:
\begin{equation*}
G_1= \mO_B(2p - 2 \tau) \oplus \mO_B(4o - 2p - 2 \tau) \oplus
\mO_B(2o-\tau),
\end{equation*}
\begin{equation*}
\begin{split}
G_2 = & \mO_B(6p - 4o - 2 \tau) \oplus \mO_B(8o - 6p - 2 \tau) \oplus \mO_B(2o + \tau) \oplus \mO_B(2p - 2 \tau) \\
 & \oplus  \mO_B(4p - 2o - \tau) \oplus \mO_B(4o - 2p - 2 \tau) \oplus \mO_B(6o - 4p - \tau) \oplus \mO_B(2p) \\
 & \oplus  \mO_B(4o - 2p) \oplus \mO_B(2o - \tau).
\end{split}
\end{equation*}
A tedious but elementary analysis of all possibilities, together
with inequality \eqref{coker}, shows that the number of parameters
involved in the construction never exceeds $11$. Hence
$\mathcal{M}_{\textrm{IVc}}$ has dimension at most $11$.
\end{proof}

Now let us write $\mathcal{M}_{\textrm{V}} =
\mathcal{M}_{\textrm{V}, \, \textrm{gen}}\, \cup \,
\mathcal{M}_{\textrm{V}, 2}$, where $\mathcal{M}_{\textrm{V},
\,2}$ consists of surfaces with $\mO_B(2o-2 \tau)= \mO_B$ and
$\mathcal{M}_{\textrm{V}, \, \textrm{gen}}$ is the rest.

\begin{proposition} \label{prop.V.nonempty}
$\mathcal{M}_{\emph{V}, \, \emph{gen}}$ and
$\mathcal{M}_{\emph{V}, \,2}$ are both non-empty.
\end{proposition}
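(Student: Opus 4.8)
The plan is to apply the Catanese--Pignatelli structure theorem (Theorem~\ref{teo:genus 2}): since the two conditions $(\mathcal{P}_1)$ and $(\mathcal{P}_2)$ are open, to show that each of the two sub-strata is non-empty it suffices to exhibit a single admissible $5$-tuple $(B,\,V_1,\,\tau,\,\xi,\,w)$ whose invariants place it in the given stratum and for which $(\mathcal{P}_1)$ and $(\mathcal{P}_2)$ both hold. First I would fix an elliptic curve $B$ and a point $p$ with $\mO_B(2p)=\mO_B(2o)$, so that $V_1=\mO_B(p)\oplus\mO_B(2o-p)$ satisfies $\mO_B(2o-2p)=\mO_B$ and $\textrm{S}^2 V_1=\mO_B(2o)^{\oplus 3}$. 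As soon as $\xi$ is a non-trivial extension class, $V_2$ is locally free and Proposition~\ref{V2-decomp} forces us into case (V), with $V_2(-2o)=\mO_B\oplus\mO_B\oplus\mO_B(\tau)$, independently of $m$. For $\mathcal{M}_{\textrm{V},2}$ I would then take $\tau$ with $\mO_B(2\tau)=\mO_B(2o)$, and for $\mathcal{M}_{\textrm{V},\textrm{gen}}$ a generic $\tau$.

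The advantage over the construction of $\mathcal{M}_{\textrm{I}}$ in Proposition~\ref{prop:I} is that here $V_2$ is a direct sum of line bundles, so no auxiliary isogeny is needed and everything can be written in global fibre coordinates $y_0,\,y_1,\,y_2$ on $\mathbb{P}(V_2)$. Using the three generators of $\textrm{S}^2 V_1$, the map $\sigma_2$ of \eqref{eq.V} becomes an explicit $3\times 3$ matrix whose entries are constants together with multiples of the fixed section $f_0\in H^0(\mO_B(\tau))$, and the conic bundle $\mathcal{C}\in|\mO_{\mathbb{P}(V_2)}(2)\otimes\pi_2^*(\det V_1)^{-2}|$ is given by the determinantal equation \eqref{eq:C}. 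Over a general point of $B$ this cuts out a smooth conic, while over $\tau$ the rank of $\sigma_2$ drops by one. To verify $(\mathcal{P}_1)$ I would reduce, exactly as in the passage leading to \eqref{eq.u} in Proposition~\ref{prop:I}, the local equation of the fibre over $\tau$ to the form $u_0u_2-(c_0u_0+c_2u_2)^2=0$; for a general choice of $\xi$ the constants $c_0,\,c_2$ make this quadratic form split into two distinct lines, so the only singularity of $\mathcal{C}$ is a node of type $A_1$ lying over $\tau$, and $(\mathcal{P}_1)$ holds.

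It then remains to verify $(\mathcal{P}_2)$, that is, to choose $w$ so that the branch divisor $\Delta\in|\mO_{\mathcal{C}}(6)\otimes\pi_2^*(\det V_1\otimes\mO_B(\tau))^{-2}|$ has only simple singularities. Since $V_2$ is completely decomposed, the relevant linear system on $\mathbb{P}(V_2)$ is spanned by explicit monomials $y_0^iy_1^jy_2^k$ tensored with sections of line bundles on $B$, and I would show that its restriction to $\mathcal{C}$ is base point free away from the node. A Bertini-type argument then gives that a general member $\Delta$ is smooth off the node and meets it generically enough that the double cover of $\mathcal{C}$ branched on $\Delta\cup\mathcal{P}$ acquires at most a Rational Double Point there, so $(\mathcal{P}_2)$ is satisfied. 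By Theorem~\ref{teo:genus 2} each such $5$-tuple produces a surface in the corresponding stratum, proving non-emptiness.

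I expect the main obstacle to be precisely the verification of $(\mathcal{P}_2)$: one must control the base locus of the branch linear system on $\mathcal{C}$ and its behaviour at the node. Moreover, in the special stratum $\mathcal{M}_{\textrm{V},2}$ the condition $\mO_B(2\tau)=\mO_B(2o)$ produces a jump in $h^0(\widetilde{A}_6)$, as happens in the analogous strata treated above, so the two branch systems are different and base point freeness has to be checked separately in the two cases; this is the only point at which the argument genuinely bifurcates.
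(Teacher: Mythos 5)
Your overall strategy coincides with the paper's: fix $\mO_B(2o-2p)=\mO_B$ so that $\textrm{S}^2V_1=\mO_B(2o)^{\oplus 3}$, write $\sigma_2$ in global coordinates on the decomposable bundle $V_2$, verify $(\mathcal{P}_1)$ via the local normal form of the fibre over $\tau$, and then choose $w$ so that $(\mathcal{P}_2)$ holds. The verification of $(\mathcal{P}_1)$ and the treatment of $\mathcal{M}_{\textrm{V},2}$ are sound: when $\mO_B(2o-2\tau)=\mO_B$ all the coefficients $b_{ijk}\in H^0(\pi_2^*\mO_B(2o+(k-2)\tau))$ of the relative cubic \eqref{eq-cubic} are generically non-zero, the system is base point free, and Bertini applies exactly as you say; this matches the paper.

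The gap is in the stratum $\mathcal{M}_{\textrm{V},\,\textrm{gen}}$. There your key claim --- that the restriction of the branch system to $\mathcal{C}$ is base point free away from the node, so that a general $\Delta$ is smooth off the node --- is false. When $\mO_B(2o-2\tau)\neq\mO_B$ one has $H^0(\mO_B(2o-2\tau))=0$, hence $b_{300}=b_{210}=b_{120}=b_{030}=0$: every relative cubic is divisible by $y_2$, i.e. contains the relative hyperplane $\mathcal{H}=\{y_2=0\}$, so \emph{every} member $\Delta$ of the system splits as $\mathcal{H}_{\mathcal{C}}\cup\Delta'$, with $\Delta'$ cut by a relative conic. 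Moreover the coefficients $b_{201},\,b_{111},\,b_{021}$ all vanish at the unique point $q$ with $\mO_B(2o-\tau)=\mO_B(q)$, so $|\Delta'|$ has two base points $P_1=[1:1:0]$ and $P_2=[1:-1:0]$ on the fibre over $q$ (note that $q\neq\tau$, so these base points are nowhere near the node), and every $\Delta$ is reducible and singular at $P_1$ and $P_2$, having two branches through each. Hence no Bertini-type smoothness statement can hold in this case; what must be proved instead is that for a general choice the singularities of $\Delta$ are \emph{simple}: namely that $\Delta$ misses the node, that $\Delta'$ is smooth, and that $\Delta'$ meets $\mathcal{H}_{\mathcal{C}}$ transversally at $P_1$ and $P_2$, producing only nodes. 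This transversality analysis is exactly what the paper carries out and is the step missing from your plan; your closing remark correctly senses that the two cases bifurcate, but it predicts the wrong resolution (base point freeness in both), so as written the argument for $\mathcal{M}_{\textrm{V},\,\textrm{gen}}$ does not go through.
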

\begin{proof}
In case (V) we have $\mO_B(2o-2p)= \mO_B$, hence the map $\sigma_2
\colon S^2 V_1 \to V_2$ has the form
\begin{equation*}
\sigma_2 \colon \mO_B(2o)^3 \lr  \mO_B(2o)^2 \oplus \mO_B(2o +
\tau).
\end{equation*}
Recall that for a general choice of $\sigma_2$ we have $\bar{f}_i
\neq 0$ for all $i \in \{1, \, 2, \, 3\}$. Take coordinates $x_0,
\, x_1$ on the fibres of $V_1$ and $y_0, \, y_1, \, y_2$ on $V_2$;
with respect to these coordinates, $\sigma_2$ is given by
\begin{equation*}
\left\{
   \begin{array}{l}
     \sigma_2(x_0^2) = a_{00}y_0 + a_{01}y_1 + a_{02} f_0 y_2 \\
     \sigma_2(x_0x_1)= a_{10}y_0 + a_{11} y_1 + a_{12} f_0 y_2 \\
     \sigma_2(x_1^2)=  a_{20}y_0 +a_{21}y_1+ a_{22} f_0 y_2,
   \end{array}
 \right.
 \end{equation*}
where $a_{ij} \in \mathbb{C}$ and $f_0 \in H^0(\mO_B(\tau))$.
Moreover, since the rank of $\sigma_2$ drops precisely at the
point $\tau$, it follows $\det (a_{ij}) \neq 0$.

Therefore the global equation of the relative conic
$\mathcal{C}\subset \mathbb{P}(V_2)$ is
\begin{equation*}
(a_{00} y_0 + a_{01}y_1 + a_{02} f_0 y_2)(a_{20} y_0 + a_{21}y_1 +
a_{22} f_0y_2)-(a_{10} y_0 + a_{11}y_1 + a_{12} f_0 y_2)^2=0.
\end{equation*}
Notice that at least one of the coefficient of $y_0^2$, $y_1^2$ or
$y_0 y_1$ in the equation of $\mathcal{C}$ is not zero, otherwise
$y_2^2$ divides the equation of $\mathcal{C}$. Since each of these
coefficients is a non-zero constant, by the argument in
\cite[Lemma 3.5]{Pi09} one sees that in any case the exact
sequence \eqref{A6} splits. Therefore we obtain
\begin{equation*} \label{A6-split-VII}
\widetilde{  A}_6 =\mO_B(2o - 2 \tau)^2 \oplus \mO_B(2o-\tau)^2
\oplus \mO_B(2o)^2 \oplus \mO_B(2o+ \tau),
\end{equation*}
so
\begin{equation*}
h^0(\widetilde{  A}_6) = \left\{%
 \begin{array}{ll}
    11 \; & \hbox{if $\mO_B(2o-2 \tau)=\mO_B$,} \\
    \;  9 & \hbox{otherwise.} \\
\end{array}%
\right.
\end{equation*}
Choosing $a_{02}=a_{22}=a_{10}=a_{11}=0$, $a_{00}=a_{01}=a_{20}=
a_{12}=1$, $a_{21}=-1$, the equation of $\mathcal{C}$ becomes
\begin{equation*}
y_0^2-y_1^2-f_0^2y_2^2=0.
\end{equation*}
Hence $\mathcal{C}$ has a unique singular point (of type $A_1$),
namely the point  $P$ with homogeneous coordinates $[0:0:1]$ lying
on the fibre over $\tau$; in particular, condition
$(\mathcal{P}_1)$ of Theorem \ref{teo:genus 2} is satisfied. Since
\eqref{A6} splits, the curve $\Delta$ defined by the section $w
\in H^0(\widetilde{A}_6)$ is cut by a relative cubic $\mathcal{G}
\in |\mO_{\mathbb{P}(V_2)} (3) \otimes \pi_2^* \mO_B(-4o-2
\tau)|$; let us write the equation of $\mathcal{G}$ as
\begin{equation} \label{eq-cubic}
\sum_{i+j+k=3}b_{ijk}\;y_0^iy_1^jy_2^k=0,
\end{equation}
where $b_{ijk} \in H^0(\mathbb{P}(V_2), \, \pi_2^{\ast}\mO_B(2o
+(k-2) \tau) )$. If $\mO_B(2o-2 \tau) = \mO_B$ then all the
coefficients of $\mathcal{G}$ are generically non-zero; one checks
that in this case the linear system $|\mathcal{G}|$ in
$\mathbb{P}(V_2)$ is base-point free, hence the linear system
$|\Delta|$ in $\mathcal{C}$ is base-point free too; by Bertini
theorem, we conclude that for a general choice of $\Delta$
condition $(\mathcal{P}_2)$ is also satisfied, hence
$\mathcal{M}_{\textrm{V}, \, 2}$ is non-empty.

If $\mO_B(2o-2 \tau) \neq \mO_B$, then
$b_{300}=b_{210}=b_{120}=b_{030}=0$. So the relative cubic
$\mathcal{G}$ splits as $\mathcal{G}=\mathcal{H} \cup
\mathcal{G}'$, where $\mathcal{H}$ is the relative hyperplane
$\{y_2=0 \}$ and $\mathcal{G}'$ is the relative conic
\begin{equation*}
b_{201}y_0^2+ b_{111}y_0y_1+
b_{102}y_0y_2+b_{021}y_1^2+b_{012}y_1y_2+b_{003}y_2^2=0.
\end{equation*}
Consequently, $\Delta$ splits as $\Delta =
\mathcal{H}_{\mathcal{C}} \cup \Delta'$, where
$\mathcal{H}_{\mathcal{C}}= \mathcal{H} \cap \mathcal{C}$ and
$\Delta' = \mathcal{G}' \cap \mathcal{C}$. The sections $b_{201},
\, b_{021}, \, b_{111}$ all vanish at the same point, namely the
unique point $q \in B$ such that $\mO_B(2o- \tau)= \mO_B(q)$;
notice that $q \neq \tau$. Hence the base locus of
$|\mathcal{G}'|$ is the line $y_2=0$ in the fibre $\pi^{-1}(q)$,
and this in turn implies that the base locus of $|\Delta'|$ in
$\mathcal{C}$ are the two points $P_1=[1: \, 1: \, 0]$ and
$P_2=[1: \, -1: \, 0]$ on the fibre of $\mathcal{C}$ over $q$. Now
let us make a general choice of the coefficients in
\eqref{eq-cubic}. Then
  $\Delta$ does not contain the unique singular point of $\mathcal{C}$;
  moreover, a standard local computation together with Bertini theorem show that
\begin{itemize}
\item $\Delta'$ is smooth; \item $\Delta'$ and
$\mathcal{H}_{\mathcal{C}}$ intersect transversally at $P_1$ and
$P_2$.
\end{itemize}
So condition $(\mathcal{P}_2)$ is satisfied and
$\mathcal{M}_{\textrm{V}, \, \textrm{gen}}$ is non-empty.
\end{proof}

Let us compute now the dimensions of $\mathcal{M}_{V, \, 2}$ and
$\mathcal{M}_{V, \, \textrm{gen}}$.

\begin{proposition}
$\mathcal{M}_{\emph{V}, \,2}$ has dimension $12$, whereas
$\mathcal{M}_{\emph{V}, \, \emph{gen}}$ has dimension $11$.
Moreover, $\mathcal{M}_{\emph{V}, \,2}$ is a generically smooth,
irreducible component of $\mathcal{M}$.
\end{proposition}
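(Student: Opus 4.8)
The plan is to prove the two dimension equalities by the parameter--count method of Remark \ref{rem.3.2}, and then to isolate $\mathcal{M}_{\textrm{V},2}$ as a generically smooth component by computing $h^1(S,\,T_S)$ at a general point. I would first fix the data in case (V): the condition $\mO_B(2o-2p)=\mO_B$ forces the two summands of $V_1$ to coincide, so $V_1=L^{\oplus 2}$ with $L^{\otimes 2}=\mO_B(2o)$ (equivalently $p$ is a $2$--torsion point), whence $\det V_1=\mO_B(2o)$ and $\textrm{S}^2V_1=\mO_B(2o)^{\oplus 3}$, as in Proposition \ref{prop.V.nonempty}. For $\mathcal{M}_{\textrm{V},2}$ one has $2\tau\sim 2o$, so $\tau$ is rigid and $h^0(\widetilde{A}_6)=11$, giving $1$ parameter for $B$, none for $V_1$ and $\tau$, $2$ for $\xi\in\mathbb{P}(\textrm{Ext}^1)$ and $10$ for $w\in\mathbb{P}H^0(\widetilde{A}_6)$, i.e. $\dim\mathcal{D}=13$; for $\mathcal{M}_{\textrm{V},\textrm{gen}}$ instead $\tau$ is free and $h^0(\widetilde{A}_6)=9$, so $\dim\mathcal{D}=12$. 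In both cases I would subtract the dimension of the general fibre of $\Phi$, which I expect to be $1$. This is the delicate point of the count: here $V_1=L^{\oplus 2}$ carries the large automorphism group $\textrm{Aut}(V_1)=\textrm{GL}_2(\mC)$, absent from the generic strata, and one must check that its action, together with the translations of $B$, has a one--dimensional orbit on the pair $(\xi,\,w)$. Granting this, $\dim\mathcal{M}_{\textrm{V},2}=12$ and $\dim\mathcal{M}_{\textrm{V},\textrm{gen}}=11$; irreducibility and unirationality of both follow since $\xi$ and $w$ vary in projective spaces, so $\mathcal{D}$ is rational, and non--emptiness is Proposition \ref{prop.V.nonempty}.

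For the component statement I would compute $h^1(S,\,T_S)$ through the exact sequences of Remark \ref{rem.3.2}. One has $h^1(S,\,T_S)=10+h^2(S,\,T_S)$ and $h^2(S,\,T_S)=h^0(S,\,\Omega^1_S\otimes\omega_S)$; moreover those sequences give $h^0(S,\,\Omega^1_S\otimes\omega_S)\le 2+h^0(S,\,\mathcal{F})$, where $H^0(S,\,\mathcal{F})$ is the space of bicanonical curves through $\textrm{Crit}(f)$. Hence it suffices to prove $h^0(S,\,\mathcal{F})=0$: this would force $h^2(S,\,T_S)=2$ and $h^1(S,\,T_S)=12=\dim_{[S]}\mathcal{M}$, so that $[S]$ is a smooth point of $\mathcal{M}$ lying on a unique $12$--dimensional component, necessarily $\overline{\mathcal{M}_{\textrm{V},2}}$. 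To control $\textrm{Crit}(f)$ I would use that, since $e(B)=0$, its length equals $e(S)=12\chi(\mO_S)-K_S^2=19$, whereas $h^0(S,\,\omega_S^{\otimes 2})=\chi(\mO_S)+K_S^2=7$. Thus $h^0(S,\,\mathcal{F})=0$ is exactly the injectivity of the restriction $H^0(\omega_S^{\otimes 2})\lr H^0\big(\omega_S^{\otimes 2}\otimes\mO_{\textrm{Crit}(f)}\big)$, i.e. the assertion that no bicanonical divisor contains the whole critical scheme.

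I would verify this injectivity on the explicit model of Proposition \ref{prop.V.nonempty}. Identifying $H^0(\omega_S^{\otimes 2})$ with $H^0(B,\,V_2)$, the bicanonical divisors are cut on $\mathcal{C}$ by the relative hyperplane sections of $\mathbb{P}(V_2)$, while $\textrm{Crit}(f)$ consists of the node of $\mathcal{C}$ lying over $\tau$ together with the points where the branch divisor $\Delta$ is tangent to the conic fibres of $\mathcal{C}$. Working in the coordinates $y_0,y_1,y_2$ for which $\mathcal{C}\colon y_0^2-y_1^2-f_0^2y_2^2=0$ and $\Delta=\mathcal{G}\cap\mathcal{C}$, I would show that for a general admissible $w$ the $19$ critical points impose independent conditions on the $7$--dimensional system $|\omega_S^{\otimes 2}|$; as $7\le 19$, this already yields injectivity. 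The argument is of Bertini type: as $\mathcal{G}$ varies the tangency points move, and one checks they cannot all lie on a single section of $V_2$.

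The main obstacle is precisely this vanishing $h^0(S,\,\mathcal{F})=0$: it requires locating the critical scheme in the Catanese--Pignatelli model of Theorem \ref{teo:genus 2} with enough precision and then proving the general--position statement against the bicanonical system, rather than a formal dimension estimate. A secondary, more bookkeeping difficulty is the fibre computation of the first paragraph, where the enhanced automorphism group $\textrm{GL}_2(\mC)$ of $V_1=L^{\oplus 2}$ must be analysed carefully so that the orbit dimension, and hence the subtraction, comes out to exactly $1$.
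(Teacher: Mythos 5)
Your scaffolding coincides with the paper's own proof: the counts $\dim\mathcal{D}=13$ for $\mathcal{M}_{\textrm{V},2}$ and $\dim\mathcal{D}=12$ for $\mathcal{M}_{\textrm{V},\textrm{gen}}$ are exactly the paper's, the fibre of $\Phi$ is indeed $1$-dimensional, and generic smoothness is reduced, as in Remark \ref{rem.3.2}, to the vanishing $h^0(S,\,\mathcal{F})=0$, i.e.\ to the statement that no bicanonical curve contains $\textrm{Crit}(f)$. But the two steps you flag and then defer are genuine gaps, and they are where essentially all the content of the proof lies. For the fibre of $\Phi$: you ``expect'' dimension $1$ and proceed by ``granting this''. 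The paper proves it: after normalizing the conic bundle to $y_0(y_1+f_0y_2)-y_1^2=0$, it explicitly solves for the subvariety $Z\subset\textrm{Aut}(V_1)\times\textrm{Aut}(V_2)$ of pairs $(\phi_1,\,\phi_2)$ compatible with $\sigma_2$, finding two explicit $2$-parameter families, so $\dim Z=2$ and the fibre has dimension $\dim Z-1=1$. Since here $\dim\big(\textrm{Aut}(V_1)\times\textrm{Aut}(V_2)\big)=4+7=11$, there is no a priori reason for $Z$ to be only $2$-dimensional; if it were larger, both claimed dimensions would drop and the component statement for $\mathcal{M}_{\textrm{V},2}$ would fail. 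So this matrix computation cannot be waved through.

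The more serious gap is the vanishing $h^0(S,\,\mathcal{F})=0$. Your justification --- ``the argument is of Bertini type: as $\mathcal{G}$ varies the tangency points move, and one checks they cannot all lie on a single section of $V_2$'' --- is a restatement of the claim, not an argument. (Also, $19$ points cannot ``impose independent conditions'' on a $7$-dimensional space; what you need is that the restriction map $H^0(\omega_S^{\otimes 2})\lr H^0\big(\omega_S^{\otimes 2}\otimes\mO_{\textrm{Crit}(f)}\big)$ has rank $7$, i.e.\ is injective.) The idea you are missing is the paper's special model: because $\mO_B(2o-2\tau)=\mO_B$, one may take $\mathcal{C}$ in diagonal form and the relative cubic as $ay_0^3+by_1^3+\lambda y_2^3=0$; solving the rank condition for $\textrm{Crit}(\Delta\lr B)$ then shows that the $18$ ramification points, together with the preimage $Q$ of the node, distribute onto six explicit curves $A_1,A_2,B_1,B_2,C_1,C_2$, each of numerical class $(H-2\Psi)^2$ and each containing $Q$ plus three further critical points. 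Since $H(H-2\Psi)^2=3<4$, B\'ezout forces any $Y\in|\mO_{\mathbb{P}(V_2)}(1)|$ through $\textrm{Crit}(f)$ to contain all six curves, and containing $A_1,A_2,B_1,B_2$ already forces $Y=0$. Without this configuration (or an equally concrete substitute), the general-position statement for the $19$ points against $|2K_S|$ is precisely what remains to be proven: these points are not free parameters but are cut out by the tangency condition on the fixed conic bundle, so no off-the-shelf Bertini theorem applies to them.
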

\begin{proof}
We first compute the dimension of the parameter space
$\mathcal{D}$ in each case. If $\mO_B(2o-2 \tau)=\mO_B$ we have
$1$ parameter for $B$, $2$ parameters for $\xi$ and $10$
parameters from $\mathbb{P}H^0(\widetilde{  A}_6)$; otherwise we
have $1$ parameter for $B$, $2$ parameters for $\xi$, $1$
parameter from $\tau$ and $8$ parameters from
$\mathbb{P}H^0(\widetilde{A}_6)$. Therefore
$\mathcal{M}_{\emph{V}, \,2}$ has dimension at most $13$, whereas
$\mathcal{M}_{\emph{V}, \, \textrm{gen}}$ has dimension at most
$12$.

By Remark \ref{rem.3.2}, we have now
 to find the dimension of the general fibre of $\Phi \colon
\mathcal{D} \to \mathcal{M}$, and for this we have to consider the
action of certain automorphism groups over our data.

Observe first that in both cases we can forget the action of
$\textrm{Aut}(B)$, since we have fixed a point of $B$ by choosing
$\det V_1=\mO_B(2o)$. So we have only to consider the action of
$\textrm{Aut}(V_1) \times \textrm{Aut}(V_2)$.

We are therefore reduced to solve the following problem: given an
admissible $5$-tuple $(B, \, V_1, \, t, \, \xi, \, w)$,
corresponding to the genus $2$ fibration $f \colon S \to B$, we
must find the dimension of the subvariety $Z \subset
\textrm{Aut}(V_1) \times \textrm{Aut}(V_2)$ given by the pairs
$(\phi_1, \, \phi_2)$ which make the following diagram commuting:

\begin{equation} \label{dia.V}
\xymatrix{ 0 \ar[r] & S^2 V_1 \ar[r]^{\sigma_2} \ar[d]_{S^2
\phi_1} & V_2
\ar[r] \ar[d]^{\phi_2} & \mO_{\tau} \ar[r] \ar@{=}[d]  & 0\\
0 \ar[r] & S^2 V_1 \ar[r]^{\sigma_2}  & V_2 \ar[r] & \mO_{\tau}
\ar[r]  & 0.}
\end{equation}

In fact, the dimension of the the fibre $\Phi^{-1}({[S]})$ is
given by $\dim Z - 1$. Geometrically, this expresses the fact that
the points in such a fibre are in $1$-to-$1$ correspondence with
the family of automorphisms of the projective bundle
$\mathbb{P}(V_2)$ fixing the conic bundle $\mathcal{C}$.

Now we claim that, if $S$ is \emph{general} in either
$\mathcal{M}_{\emph{V}, \,2}$ or $\mathcal{M}_{\emph{V}, \,
\textrm{gen}}$, by choosing suitable coordinates for $V_1$ and
$V_2$ we can put the equation of the conic bundle $\mathcal{C}$ in
the form
\begin{equation} \label{eq.conic.gen}
y_0(y_1+f_0y_2)-y_1^2=0.
\end{equation}
In fact, in the general case $\mathcal{C}$ has a nodal fibre over
the point $\tau$; without loss of generality we can assume that
such a fibre has equation $y_1(y_0-y_1)=0$, so that the conic
bundle has the form
$(y_0+a_{02}f_0y_2)(y_1+a_{22}f_0y_2)-(y_1+a_{12}f_0y_2)^2=0$. Now
 the claim follows by using the linear change of coordinates
 \begin{equation*}
y_0' :=y_0+a_{02}f_0y_2, \quad y_1':=y_1+a_{12}f_0y_2, \quad
y_2':=(a_{22}-a_{12})y_2.
\end{equation*}

Therefore, in order to compute the dimension of the general fibre
of $\Phi$, we may assume that the matrix associated with $\sigma_2
\colon S^2V_1 \to V_2$ is
\begin{equation*}
\left(
        \begin{array}{ccc}
            1 & 0 & 0 \\
            0 & 1 & 1 \\
            0 & 0 & f_0 \\
        \end{array}
         \right).
\end{equation*}

Let now $\phi_1 \in \textrm{Aut}(V_1)$, given by $\phi_1 (x_0) = a
x_0 + c x_1$ and $\phi_1 (x_1) = b x_0 + d x_1$, $a,\, b, \, c, \,
d \in \mathbb{C}$. Then the action of $S^2 \phi_1$ on $S^2V_1$ is
expressed by the matrix
\begin{equation*}
        \left(
        \begin{array}{ccc}
            a^2 & ab & b^2 \\
            2ac & ad + bc & 2bd \\
            c^2 & cd & d^2 \\
        \end{array}
         \right).
\end{equation*}
On the other hand, the general $\phi_2 \in \textrm{Aut}(V_2)$ is
given by
\begin{equation*}
    \left(
   \begin{array}{ccc}
     b_{11} & b_{12} & 0 \\
     b_{21} & b_{22} & 0 \\
     b_{31} f_0 & b_{32} f_0 & b_{33} \\
   \end{array}
    \right),
\end{equation*}
where $b_{ij} \in \mathbb{C}$. Hence, imposing that the diagram
\eqref{dia.V} commutes, by straightforward computations one finds
that any pair $(\phi_1, \, \phi_2) \in Z$ is either of the form
\begin{equation*}
\phi_1= \left(
\begin{array}{cc}
a & a \\
c & -a \\
\end{array}
\right), \quad \phi_2 = \left(
  \begin{array}{ccc}
    a^2 & a^2 & 0 \\
     2ac+c^2 & -a^2 & 0 \\
     c^2 f_0 & -ac f_0 & a(a+c) \\
   \end{array}
   \right)
\end{equation*}
or of the form
\begin{equation*}
\phi_1= \left(
\begin{array}{cc}
a & 0 \\
c & a+c \\
\end{array}
\right), \quad \phi_2 = \left(
  \begin{array}{ccc}
    a^2 & 0 & 0 \\
     2ac+c^2 & (a+c)^2 & 0 \\
     c^2 f_0 & c(a+c) f_0 & a(a+c) \\
   \end{array}
   \right).
\end{equation*}
It follows that $Z \subset \textrm{Aut}(V_1) \times
\textrm{Aut}(V_2)$ is a subvariety of dimension $2$. Consequently,
the general fibre of $\Phi$ has dimension $1$; this means that the
dimension of $\mathcal{M}_{V, \, 2}$ equals $12$, whereas the
dimension of
$\mathcal{M}_{V, \, \textrm{gen}}$ equals $11$. \\

Now we want to prove that $\mathcal{M}_{V, \, 2}$ is an
irreducible component of $\mathcal{M}$. In order to do this, we
will show that $h^1(S, \, T_S)=12$ for a general $S \in
\mathcal{M}_{V, \, 2}$. Since $\dim\ \mathcal{M}_{V, \, 2} = 12$,
this will also prove that this component is generically smooth.

The condition $h^1 (S, \, T_S) \leq 12$ is equivalent to $h^2(S,
\, T_S) = h^0(S, \, \Omega_S^1 \otimes \omega_S) \leq 2$. By
Remark \ref{rem.3.2}, it is therefore enough to prove that $h^0
(\mathcal{F}) = 0$, where $\mathcal{F} := (\Omega_S^1 \otimes
\omega_S) / \omega_S$ or, equivalently, that there are no
bicanonical curves of $S$ containing the $0$--dimensional scheme
$\textrm{Crit} (f)$.

By the results in Subsection \ref{C-P}, the Albanese fibration $f
\colon S \to B$ factors as the composition of the conic bundle
$\mathcal{C} \lr B$ and a finite double cover $\psi:S \lr
\mathcal{C}$ branched on the node of $\mathcal{C}$ and on a smooth
curve $\Delta$ not passing through the node.

Let us study the $0$-dimensional scheme $\textrm{Crit}(f)$. Since
all the fibres of $\mathcal{C}$ are reduced, the critical points
of $f$ must be fixed by the involution of $S$. The isolated fixed
point is the preimage of the node of $\mathcal{C}$, and it is
critical for $f$. The other critical points of $f$ are the points
of $S$ whose images in $\mathcal{C}$ are the ramification points
for the map $\Delta \lr B$.

As before, we can choose $\mathcal{C}$ of equation $y_0^2 - y_1^2
+ f_0y_2^2 = 0$, and the curve $\Delta$ is defined as the complete
intersection of $\mathcal{C}$ with a relative cubic $\mathcal{G}
\in |\mO_{\mathcal{C}}(3) \otimes \mO_B(-4o - 2\tau)|$. Since
$\mathcal{O}_B(2o-2 \tau) = \mathcal{O}_B$, we can choose
$\mathcal{G}$ of equation
\begin{equation*}
                    a y_0^3 + b y_1^3 + \lambda y_2^3 = 0,
\end{equation*}
where $a, \, b \in \mathbb{C}$ and $\lambda \in
H^0(\mathbb{P}(V_2), \, \pi_2^*\mO_B(3\tau))$, see
\eqref{eq-cubic}. The node $P$ of $\mathcal{C}$ is the point with
homogeneous coordinates $[0:0:1]$ lying on the fibre over $\tau$,
and $\textrm{Crit} (\Delta \lr B)$ is defined by
\begin{equation*}
\textrm{rank} \left (
    \begin{array}{ccc}
      y_0 & -y_1 & f_0^2 y_2 \\
      a y_0^2 & b y_1^2 & \lambda y_2^2
    \end{array} \right ) \leq 1.
\end{equation*}
This is obviously equivalent to set equal $0$ all the minors of
order $2$. So we must solve the system of equations
\begin{equation*} \label{crit}
\left\{
   \begin{array}{l}
     b y_0y_1^2 + a y_0^2y_1 = 0 \\
     \lambda y_0y_2^2 - a f_0^2 y_0^2y_2 = 0 \\
     \lambda y_1y_2^2 + b y_1^2y_2 = 0,
   \end{array}
\right. \quad \textrm{that is} \quad \left\{
   \begin{array}{l}
     y_0y_1(b y_1 + a y_0) = 0 \\
     y_0y_2(\lambda y_2 - a f_0^2 y_0) = 0 \\
     y_1y_2(\lambda y_2 + b f_0^2 y_1) = 0.
   \end{array}
\right.
\end{equation*}

This yields
\begin{equation*}
\begin{split}
&\{y_0 = y_1 = 0\} \: \cup \: \{y_0 = y_2 = 0\}   \: \cup  \:  \{y_0 = \lambda y_2 + b f_0^2 y_1 = 0\}   \: \cup  \:  \\ &\{y_1 = y_2 = 0\}   \: \cup  \:  \{y_1 = \lambda y_2 - a f_0^2 y_0 = 0\}   \: \cup  \:  \\
&\{y_2 = b y_1 + a y_0 = 0\} \: \cup \: \{\lambda y_2 + b f_0^2
y_1 = \lambda y_2 - a f_0^2 y_0 = b y_1 + a y_0 = 0\}.
\end{split}
\end{equation*}
 Let us compute, in each case, the solutions in $\mathcal{C}$:
 \begin{description}
   \item[$\{y_0 = y_1 = 0\}$] In this case, because $f_0(\tau) = 0$, the unique solution in $\mathcal{C}$ is the point $P$.
   \item[$\{y_0 = y_2 = 0\}$] By looking at the equation of $\mathcal{C}$ we have also that $y_1 = 0$, and this is impossible. So in this case there are no solutions.
   \item[$\{y_0 = \lambda y_2 + b f_0^2 y_1 = 0\}$] We must solve
\begin{equation*}
   \left\{
   \begin{array}{l}
     y_0 = \lambda y_2 + b f_0^2 y_1 = 0 \\
     y_0^2 - y_1^2 + f_0^2y_2^2 = 0,
\end{array}
\right. \quad \textrm{that is} \quad
   \left\{
   \begin{array}{l}
     y_0 = 0 \\
     \lambda y_2 + b f_0^2 y_1 = 0 \\
     (- y_1 + f_0 y_2) (y_1 + f_0 y_2) = 0,
   \end{array}
\right.
\end{equation*}
which gives
\begin{equation*}
   \left\{
   \begin{array}{l}
     y_0 = 0 \\
     y_1 = f_0 y_2 \\
     y_2 (\lambda + b f_0^3) = 0
   \end{array}
\right. \quad \cup \quad \left\{
                           \begin{array}{l}
                             y_0 = 0 \\
                             y_1 = - f_0 y_2 \\
                             y_2 (\lambda - b f_0^3) = 0.
                           \end{array}
\right.
\end{equation*}
Since $y_2 \neq 0$ the solutions are the three points
$[0:f_0(\rho_i):1]$ lying on the fibres over $\rho_i$, where
$\rho_1 + \rho_2 + \rho_3 = \textrm{div} (\lambda + b f_0^3)$, and
the three points $[0: -f_0(\rho'_i):1]$ lying on the fibres over
$\rho'_i$, where $\rho'_1 + \rho'_2 + \rho'_3 = \textrm{div}
(\lambda - b f_0^3)$.
   \item[$\{y_1 = y_2 = 0\}$] The equation of $\mathcal{C}$ also gives $y_0 = 0$, which is impossible; so in this case there are no solutions.
   \item[$\{y_1 = \lambda y_2 - a f_0^2 y_0 = 0\}$] The computations are the same as in the case $\{y_0 = \lambda y_2 + b f_0^2 y_1 = 0\}$.
   The solutions are the three points $[-\sqrt{-1}f_0(\varepsilon_i):0:1]$ lying on the fibres over $\varepsilon_i$, where
   $\varepsilon_1 + \varepsilon_2 + \varepsilon_3 = \textrm{div} (\lambda + a \sqrt{-1} f_0^3)$, and the three points $[\sqrt{-1}f_0(\varepsilon_i):0:1]$ lying on the fibres over
   $\varepsilon'_i$,
    where $\varepsilon'_1 + \varepsilon'_2 + \varepsilon'_3 = \textrm{div} (\lambda - a \sqrt{-1} f_0^3)$.
   \item[$\{y_2 = b y_1 + a y_0 = 0\}$] From the equation of $\mathcal{C}$ it follows that for a generic choice of $a$ and $b$ we must have $y_0 = y_1 = y_2 = 0$, which is impossible.
   So in this case there are no solutions.
   \item[$\{\lambda y_2 + b f_0^2 y_1 = \lambda y_2 - a f_0^2 y_0 = b y_1 + a y_0 = 0\}$] In this case
we find six points, three on the curve
       \begin{equation*}
       \left\{
       \begin{array}{l}
         b y_1 + a y_0 = 0 \\
         c y_0 + b f_0 y_2 = 0
       \end{array}
       \right.
       \end{equation*}
       and three on the curve
       \begin{equation*}
       \left\{
       \begin{array}{l}
         b y_1 + a y_0 = 0 \\
         c y_0 - b f_0 y_2 = 0,
       \end{array}
       \right.
       \end{equation*}
where $- c^2 = b^2 - a^2$. In general, $a, \, b$ and $c$ are
nonzero and, in such a case, the solutions are the three points
$[-b f_0(\sigma_i):a f_0(\sigma_i):c]$ lying on the fibres over
$\sigma_i$, where $\sigma_1 + \sigma_2 + \sigma_3 = \textrm{div}
(c \lambda + a b f_0^3)$ and the three points $[b
f_0(\sigma'_i):-a f_0(\sigma'_i):c]$ lying on the fibres over
$\sigma'_i$, where $\sigma'_1 + \sigma'_2 + \sigma'_3 =
\textrm{div} (c \lambda - a b f_0^3)$.
\end{description}
Summing up, for a general $S \in \mathcal{M}_{\textrm{V}, 2}$ the
$0$-dimensional scheme $\textrm{Crit}(f)$ consists precisely of
$19$ distinct points. One is the preimage $Q := \psi^{-1}(P)$ of
$P$ in $S$, and the others correspond to the singularities of
eighteen $2$-connected nodal curves, as in the following picture:
\begin{center}
\begin{tikzpicture}[point/.style={circle,inner sep=0pt,minimum size=1pt,fill=black},
skip loop/.style={to path={-- ++(0,#1) -| (\tikztotarget)}}]
 \node (p1) [point ] at (3,0) [] {};
 \node (p2) [point] at (4.5,0) [] {};
 \node (p3) [point] at (6,0) [] {};
 \node (Crit) at (4.4,-1) [] {$\textrm{Crit}(\Delta \lr B)$};
 \node (Sing) at (-0.2,-1) [] {$\textrm{Sing}(\mathcal{C})$};

 \node (S) at (8.4,2.5) [] {$S$};
 \node (B) at (8.4,-2.3) [] {$B$};

 \fill (-0.2,-2.5) circle (0.04cm);
 \node (tau) at (-0.2,-2.5) [anchor=north] {$\tau$};

 \draw[-latex] (8.4,2) -- node[right]{\scriptsize{$f$}} (8.4,-1.8);

 \draw (0,0) .. controls (-0.1,1) .. (-0.3,1.8);
 \draw (0,3) .. controls (-0.1,2) .. (-0.3,1.2);
 \draw (1.5,0) .. controls (0.5,4) and (0.5,-1) .. (1.5,3);
 \draw (7.5,0) .. controls (6.5,4) and (6.5,-1) .. (7.5,3);

 \draw[decorate,decoration={brace,mirror,amplitude=2mm,raise=6pt}] (1.1,0) -- (7.7,0);

 \draw (-1.5,-2.5) -- (9,-2.5);
\end{tikzpicture}
\end{center}

Notice that this agrees with the Zeuthen--Segre formula
\begin{equation*}
19 = \chi_{\textrm{top}}(S) =
\chi_{\textrm{top}}(B)\chi_{\textrm{top}}(F) + \sum
\chi_{\textrm{top}}(F_p) - \chi_{\textrm{top}}(F) = \sum
\chi_{\textrm{top}}(F_p) - \chi_{\textrm{top}}(F),
\end{equation*}
where the sum runs over the singular fibres of $f$. Thus for a
general $S \in \mathcal{M}_{V, \, 2}$, the Albanese map has
exactly $19$ singular fibres.

Since the linear system $|2K_S|$ is the pullback via the relative
bicanonical map of the linear system $|\mO_{\mathbb{P}(V_2)}(1)|$,
we must now compute the dimension of the vector space of elements
in $H^0(\mO_{\mathbb{P}(V_2)}(1))$ which contain
$\textrm{Crit}(f)$.

Let us consider the six curves
\begin{equation*}
A_1\colon \left\{
       \begin{array}{l}
         y_0 = 0 \\
         y_1 - f_0 y_2 = 0,
       \end{array}
    \right.         \:  \:        A_2\colon \left\{
                                        \begin{array}{l}
                                          y_0 = 0 \\
                                          y_1 + f_0 y_2 = 0,
                                        \end{array}
                                      \right.\\
\end{equation*}
\begin{equation*}
B_1\colon \left\{
       \begin{array}{l}
         y_1 = 0 \\
         y_0 - \sqrt{-1} f_0 y_2 = 0,
       \end{array}
    \right.         \:  \:        B_2\colon \left\{
                                        \begin{array}{l}
                                          y_0 = 0 \\
                                          y_0 + \sqrt{-1} f_0 y_2 = 0,
                                        \end{array}
                                      \right.\\
\end{equation*}
\begin{equation*}
C_1\colon \left\{
       \begin{array}{l}
         b y_1 + a y_0 = 0 \\
         c y_0 + b f_0 y_2 = 0,
       \end{array}
    \right.         \:  \:        C_2\colon \left\{
                                        \begin{array}{l}
                                          b y_1 + a y_0 = 0 \\
                                          c y_0 - b f_0 y_2 = 0.
                                        \end{array}
                                      \right.
\end{equation*}
Each curve contains $Q$ and three other points of $\textrm{Crit}
(f)$ as in the following picture:

\vskip 0.5cm

\begin{center}
\begin{tikzpicture}
\node (Q) at (0,0) [anchor=north] {$Q$};

\draw[rotate=15] (-1,0) .. controls +(8:0.2cm) and +(172:0.2cm) ..
(0,0); \draw[name path=A2, rotate=15] (0,0) .. controls
+(352:0.2cm) and +(188:0.4cm) .. node[very near end,below] {$A_2$}
(3,0);

\draw[rotate=45] (-1,0) .. controls +(8:0.2cm) and +(172:0.2cm) ..
(0,0); \draw[name path=B2, rotate=45] (0,0) .. controls
+(352:0.2cm) and +(188:0.4cm) .. node[very near end,right] {$B_2$}
(3,0);

\draw[rotate=75] (-1,0) .. controls +(8:0.2cm) and +(172:0.2cm) ..
(0,0); \draw[name path=C2, rotate=75] (0,0) .. controls
+(352:0.2cm) and +(188:0.4cm) .. node[very near end,right] {$C_2$}
(3,0);

\draw[rotate=105] (-1,0) .. controls +(352:0.2cm) and +(188:0.2cm)
.. (0,0); \draw[name path=C1, rotate=105] (0,0) .. controls
+(8:0.2cm) and +(172:0.4cm) .. node[very near end,left]
{$C_1$}(3,0);

\draw[rotate=135] (-1,0) .. controls +(352:0.2cm) and +(188:0.2cm)
.. (0,0); \draw[name path=B1, rotate=135] (0,0) .. controls
+(8:0.2cm) and +(172:0.4cm) .. node[very near end,left]
{$B_1$}(3,0);

\draw[rotate=165] (-1,0) .. controls +(352:0.2cm) and +(188:0.2cm)
.. (0,0); \draw[name path=A1, rotate=165] (0,0) .. controls
+(8:0.2cm) and +(172:0.4cm) .. node[very near end,below]
{$A_1$}(3,0);

\path[name path=circle1] (0,0) circle (0.7cm); \path[name
path=circle2] (0,0) circle (1.4cm); \path[name path=circle3] (0,0)
circle (2.1cm);

\fill (0,0) circle (0.04cm);

\fill [name intersections={of=A2 and circle1}] (intersection-1)
circle (0.04cm); \fill [name intersections={of=B2 and circle1}]
(intersection-1) circle (0.04cm); \fill [name intersections={of=C2
and circle1}] (intersection-1) circle (0.04cm); \fill [name
intersections={of=C1 and circle1}] (intersection-1) circle
(0.04cm); \fill [name intersections={of=B1 and circle1}]
(intersection-1) circle (0.04cm); \fill [name intersections={of=A1
and circle1}] (intersection-1) circle (0.04cm);

\fill [name intersections={of=A2 and circle2}] (intersection-1)
circle (0.04cm); \fill [name intersections={of=B2 and circle2}]
(intersection-1) circle (0.04cm); \fill [name intersections={of=C2
and circle2}] (intersection-1) circle (0.04cm); \fill [name
intersections={of=C1 and circle2}] (intersection-1) circle
(0.04cm); \fill [name intersections={of=B1 and circle2}]
(intersection-1) circle (0.04cm); \fill [name intersections={of=A1
and circle2}] (intersection-1) circle (0.04cm);

\fill [name intersections={of=A2 and circle3}] (intersection-1)
circle (0.04cm); \fill [name intersections={of=B2 and circle3}]
(intersection-1) circle (0.04cm); \fill [name intersections={of=C2
and circle3}] (intersection-1) circle (0.04cm); \fill [name
intersections={of=C1 and circle3}] (intersection-1) circle
(0.04cm); \fill [name intersections={of=B1 and circle3}]
(intersection-1) circle (0.04cm); \fill [name intersections={of=A1
and circle3}] (intersection-1) circle (0.04cm);
\end{tikzpicture}
\end{center}

The N\'{e}ron--Severi group $\textrm{NS}(\mathbb{P}(V_2))$ is
generated by $H$ and $\Psi$, where $H$ is the class of
$\mO_{\mathbb{P}(V_2)} (1)$ and $\Psi$ is the class of a fibre.

Let $Y$ be an element of $|\mO_{\mathbb{P}(V_2)} (1)|$ containing
$\textrm{Crit}(f)$. Thus $Y$ contains $4$ points in each curve
$A_j$, $B_j$, $C_j$, $j = 1, \, 2$. Since the numerical class of
these curves is $(H - 2\Psi)^2$, we have
\begin{equation*}
H(H - 2 \Psi)^2 = H(H^2 - 2 H \Psi) = H^3 - 4 H \Psi = 7 - 4 = 3
\end{equation*}
and so, by B\'{e}zout theorem, $Y$ contains all the curves $A_j$,
$B_j$, $C_j$. Let us write the equation of $Y$ as $\alpha y_0 +
\beta y_1 + \gamma y_2 = 0$, where $\alpha, \beta \in H^0(\pi_2^*
\mO_B (2o))$ and $\gamma \in H^0(\pi_2^* \mO_B (2o + \tau))$.

By imposing that $Y$ contains $A_1$, we find
\begin{equation*}
\beta f_0 y_2 + \gamma y_2 \equiv 0,
\end{equation*}
which implies $\gamma = -\beta f_0$. By imposing that $Y$ contains
$A_2$, we find
\begin{equation*}
-\beta f_0 y_2 + \gamma y_2 \equiv 0,
\end{equation*}
which implies $\gamma = \beta f_0$. It follows $\gamma = \beta =
0$, hence $Y$ has equation $\alpha y_0 = 0$. Similarly, by
imposing that $Y$ contains both $B_1$ and $B_2$, we obtain that
$Y$ is of the form $\beta y_1 = 0$. Thus $Y \equiv 0$, i.e.
\begin{equation*}
\textrm{Ker}[H^0(\omega_S^{\otimes 2}) \lr
H^0(\mO_{\textrm{Crit}(f)}(\omega_S^{\otimes 2}))] = 0,
\end{equation*}
which implies $h^1(T_S) = 12$. This shows that
$\mathcal{M}_{\textrm{V}, 2}$ is a generically smooth, irreducible
component of $\mathcal{M}$ of dimension $12$.
\end{proof}

Finally, we consider the strata belonging to $\mathcal{M}''$. The
surfaces belonging to these strata satisfy $V_1 = F_2 (\eta)$,
where $\eta$ is a $2$--torsion point, hence $V_1$ will not play
any role in the computation of parameters.

\begin{proposition}
The stratum $\mathcal{M}_{\emph{VI}}$ has dimension at most $12$.
\end{proposition}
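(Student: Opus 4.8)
The plan is to bound $\dim \mathcal{M}_{\textrm{VI}}$ from above by the dimension of the parameter space $\mathcal{D}$ of admissible $5$-tuples $(B, \, V_1, \, \tau, \, \xi, \, w)$ giving surfaces in this stratum, following the same scheme as in the previous propositions. The key observation is that in case (VI) one has $V_2(-2o) = E_{\tau}(3, \, 1)$, which is exactly the bundle occurring in case (I); moreover, even though here $V_1 = F_2(\eta)$ is indecomposable, the fact that $\eta$ is a $2$-torsion point forces $\det V_1 = \mO_B(2\eta) = \mO_B(2o)$, precisely as in the decomposable case. Consequently, tensoring \eqref{A6} by $(\det V_1 \otimes \mO_B(\tau))^{-2}$ as in \eqref{A6s} produces for $\widetilde{A}_6$ the very same short exact sequence used in the proof of Proposition \ref{prop:I}, namely
$$0 \lr W(2o - 2\tau) \lr \textrm{S}^3 W(2o - 2\tau) \lr \widetilde{A}_6 \lr 0, \qquad W := E_{\tau}(3, \, 1).$$

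First I would invoke the cohomology computations of \cite[Section 1]{CaCi93} already used for $\mathcal{M}_{\textrm{I}}$, namely $h^0(W(2o-2\tau)) = 1$, $h^1(W(2o-2\tau)) = 0$ and $h^0(\textrm{S}^3 W(2o-2\tau)) = 10$. Since the relevant $H^1$ vanishes, the long exact sequence in cohomology yields $h^0(\widetilde{A}_6) = 10 - 1 = 9$, so that $\mathbb{P}H^0(\widetilde{A}_6)$ has dimension $8$.

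Next I would count parameters. The elliptic curve $B$ contributes $1$; the bundle $V_1 = F_2(\eta)$ contributes $0$, since, as remarked just before the statement, there are only finitely many admissible $2$-torsion points $\eta$; the divisor $\tau$ contributes $1$, being a single point of $B$ because $\deg \tau = 1$ by Theorem \ref{teo:genus 2}; the extension class $\xi$ contributes $2$, since by \eqref{eq:ext-indec} it varies in $\textrm{Ext}^1(\mO_{\tau}, \, \textrm{S}^2 V_1)/\textrm{Aut}(\mO_{\tau}) = \mathbb{P}(\mathbb{C}^3)$; and $w \in \mathbb{P}H^0(\widetilde{A}_6)$ contributes $8$. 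Adding these up gives $1 + 0 + 1 + 2 + 8 = 12$, whence $\dim \mathcal{D} = 12$ and therefore $\dim \mathcal{M}_{\textrm{VI}} \leq \dim \mathcal{D} = 12$.

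The only point that requires care is the identification of the defining sequence of $\widetilde{A}_6$ with the one from case (I): one must verify that the twisting line bundle $(\det V_1 \otimes \mO_B(\tau))^{-2}$ is the same in both situations, which is exactly where the hypothesis that $\eta$ is $2$-torsion (giving $\det V_1 = \mO_B(2o)$) enters. Once this is checked, no new cohomological computation is needed and the dimension bound follows purely from the parameter count above; I do not expect any substantial obstacle beyond this bookkeeping.
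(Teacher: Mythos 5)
Your proposal is correct and follows essentially the same route as the paper: the same short exact sequence $0 \lr W(2o-2\tau) \lr \textrm{S}^3 W(2o-2\tau) \lr \widetilde{A}_6 \lr 0$ with $W = E_{\tau}(3,1)$, the same cohomology computations from \cite[Section 1]{CaCi93} giving $h^0(\widetilde{A}_6)=9$, and the same parameter count $1+0+1+2+8=12$. Your explicit verification that $\det V_1 = \mO_B(2o)$ in the indecomposable case (so that the twist $(\det V_1 \otimes \mO_B(\tau))^{-2}$ agrees with case (I)) is a detail the paper leaves implicit, but it is not a different argument.
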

\begin{proof}
Set $ W :=  E_{\tau}(3, \, 1)$; then we have a short exact
sequence
\begin{equation*}
0 \lr W(2o - 2 \tau) \lr \textrm{S}^3 W (2o - 2 \tau) \lr
\widetilde{ A}_6 \lr 0.
\end{equation*}
By \cite[Section 1]{CaCi93} we obtain
\begin{equation*}
h^0( W(2o - 2 \tau))=1, \quad  h^1( W(2o - 2 \tau))=0, \; \;
h^0(\textrm{S}^3 W(2o - 2 \tau))=10,
\end{equation*}
hence $h^0(\widetilde{ A}_6)=9$. We have $1$ parameter for $B$,
$2$ parameters for $\xi$, $1$ parameter for $\tau$
 and $8$ parameters from $\mathbb{P}H^0(\widetilde{  A}_6)$. Therefore
either $\mathcal{M}_{\textrm{VI}}$ has dimension at most $12$.
\end{proof}

\begin{proposition}
The stratum $\mathcal{M}_{\emph{VIIa}}$ has dimension at most
$11$.
\end{proposition}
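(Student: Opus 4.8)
The plan is to follow verbatim the dimension count already carried out for $\mathcal{M}_{\textrm{VI}}$ and $\mathcal{M}_{\textrm{IIb}}$, the only genuinely new input being the explicit form of $\widetilde{A}_6$ in case (VIIa). First I would record that here $V_2(-2o) = F_2 \oplus \mO_B(\tau)$, so $V_2 = F_2(2o) \oplus \mO_B(2o + \tau)$, and that $\det V_1 = \mO_B(2o)$ since $V_1 = F_2(\eta)$ with $2\eta \in |2o|$. Tensoring the defining sequence \eqref{A6} of $A_6$ by $(\det V_1 \otimes \mO_B(\tau))^{-2} = \mO_B(-4o-2\tau)$ yields the sequence \eqref{A6s}
\begin{equation*}
0 \lr G_1 \lr G_2 \lr \widetilde{A}_6 \lr 0,
\end{equation*}
and using $\textrm{S}^2 F_2 = F_3$, $\textrm{S}^3 F_2 = F_4$ from Proposition \ref{prop:vec-ell} to expand $\textrm{S}^3 V_2$, a twisting-down computation gives
\begin{equation*}
G_1 = F_2(2o-2\tau) \oplus \mO_B(2o-\tau), \quad G_2 = F_4(2o-2\tau) \oplus F_3(2o-\tau) \oplus F_2(2o) \oplus \mO_B(2o+\tau).
\end{equation*}

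Next I would read off cohomology from Proposition \ref{cohomology-fibrati}. The positive-degree summands $F_3(2o-\tau)$, $F_2(2o)$, $\mO_B(2o+\tau)$ of $G_2$ have no $H^1$ and contribute $3+4+3 = 10$ sections, while $\mO_B(2o-\tau)$ in $G_1$ contributes one. The only subtle terms are the degree-zero bundles $F_4(2o-2\tau)$ and $F_2(2o-2\tau)$, whose cohomology depends on whether $\mO_B(2o-2\tau)$ is trivial. If $\mO_B(2o-2\tau) \neq \mO_B$ then $h^0(G_2)=10$, $h^0(G_1)=1$, $h^1(G_1)=0$, so \eqref{coker} gives $h^0(\widetilde{A}_6) \leq 9$; if $\mO_B(2o-2\tau) = \mO_B$ then $h^0(G_2)=11$, $h^0(G_1)=2$, $h^1(G_1)=1$, so $h^0(\widetilde{A}_6) \leq 10$.

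Finally I would count the parameters of the space $\mathcal{D}$ of admissible $5$-tuples, recalling that $V_1 = F_2(\eta)$ carries no moduli. Besides the one modulus of $B$: in the first case $\tau$ is free (one parameter) and $\mathbb{P}H^0(\widetilde{A}_6)$ contributes at most $8$, whereas in the second case $\tau$ is pinned down by $\mO_B(2\tau) = \mO_B(2o)$ (no parameters) and $\mathbb{P}H^0(\widetilde{A}_6)$ contributes at most $9$. The step I expect to be the crux is the count for $\xi$: by the last assertion of Proposition \ref{V2-indecomp} the general point of the $\mathbb{P}^2$ parametrizing the extension classes $\xi$ produces case (VI), so (VIIa) is cut out by a proper closed condition and $\xi$ ranges over a locus of dimension at most $1$. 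With this bound, both cases total at most $1 + 1 + 1 + 8 = 11$ and $1 + 0 + 1 + 9 = 11$ respectively, and since $\Phi \colon \mathcal{D} \to \mathcal{M}$ is surjective onto the stratum this yields $\dim \mathcal{M}_{\textrm{VIIa}} \leq 11$.
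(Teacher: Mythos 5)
Your proof is correct and follows essentially the same route as the paper: the same twisted sequence defining $\widetilde{A}_6$ with identical $G_1$ and $G_2$, the same two cases according to whether $\mO_B(2o-2\tau)$ is trivial, the same cohomology counts, and the same parameter tally, including the bound of at most one parameter for $\xi$, which the paper likewise deduces from Proposition \ref{V2-indecomp}. Your explicit argument that the (VIIa) locus lies in a proper closed subset of the $\mathbb{P}^2$ of extension classes (because the general class yields case (VI)) merely spells out what the paper cites implicitly.
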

\begin{proof}
In this case $V_2(-2o) = F_2 \oplus \mO_B(\tau)$, and $\xi$
belongs to a family which is at most $1$-dimensional, see
Proposition \ref{V2-indecomp}. The vector bundle $\widetilde{A}_6$
fits into a short exact sequence
\begin{equation*}
0 \lr G_1 \lr G_2 \lr \widetilde{A}_6 \lr 0,
\end{equation*}
where
\begin{equation*}
G_1 = \big(F_2 \oplus \mO_B(\tau) \big)(2o- 2 \tau), \quad G_2 =
\big(F_4 \oplus F_3(\tau) \oplus F_2(2 \tau) \oplus \mO_B(3 \tau)
\big) (2o - 2 \tau).
\end{equation*}
We distinguish two cases. \\
$(i)$ $\mO_B(2o - 2 \tau) \neq \mO_B$. We obtain
\begin{equation*}
h^0(G_1)=1, \quad h^1(G_1)=0, \quad h^0(G_2)=10,
\end{equation*}
therefore $h^0(\widetilde{A}_6)=9$. We have $1$ parameter for $B$,
at most one parameter for $\xi$, one parameter for $\tau$ and $8$
parameters from $\mathbb{P}H^0(
\widetilde{ A}_6)$. \\
$(ii)$ $\mO_B(2o - 2 \tau) = \mO_B$. We obtain
\begin{equation*}
h^0(G_1)=2, \quad h^1(G_1)=1, \quad h^0(G_2)=11,
\end{equation*}
 hence $h^0(\widetilde{A}_6) \leq 10$, see \eqref{coker}. We have $1$ parameter for $B$, at most one parameter for $\xi$,
no parameters for $\tau$ and at most $9$ parameters from
$\mathbb{P}H^0( \widetilde{ A}_6)$.

It follows that $\mathcal{M}_{\textrm{VIIa}}$ has dimension at
most $11$.
\end{proof}

\begin{proposition}
The stratum $\mathcal{M}_{\emph{VIIb}}$ has dimension at most
$11$.
\end{proposition}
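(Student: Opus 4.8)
The plan is to repeat, almost verbatim, the parameter count carried out for the two previous indecomposable strata, since the bundle $\widetilde{A}_6$ here is built from exactly the same data $V_2(-2o) = E_{\tau}(2, \, 1) \oplus \mO_B$ that already appeared in case $(\textrm{IIb})$. First I would set $W := E_{\tau}(2, \, 1)$, so that $V_2(-2o) = W \oplus \mO_B$, and consider the short exact sequence \eqref{A6s}
\[
0 \lr G_1 \lr G_2 \lr \widetilde{A}_6 \lr 0
\]
obtained by tensoring \eqref{A6} with $(\det V_1 \otimes \mO_B(\tau))^{-2} = \mO_B(-4o - 2\tau)$. Writing $V_2 = (W \oplus \mO_B)(2o)$ and expanding the symmetric powers of a direct sum, one gets
\[
G_1 = (W \oplus \mO_B)(2o - 2\tau), \qquad G_2 = \big(\textrm{S}^3 W \oplus \textrm{S}^2 W \oplus W \oplus \mO_B\big)(2o - 2\tau),
\]
and Proposition \ref{prop:vec-ell} then replaces $\textrm{S}^2 W$ by $\bigoplus_{i=1}^3 L_i(\tau)$ and $\textrm{S}^3 W$ by $W(\tau) \oplus W(\tau)$. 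Recall that $V_1 = F_2(\eta)$ is rigid, so it contributes no parameters.

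The heart of the argument is the cohomology bookkeeping for $G_1$ and $G_2$. The summands $W(2o - \tau)$ and $L_i(2o - \tau)$ have the same cohomology already computed in the proof of Proposition \ref{prop:IIb}, namely $h^0 = 3$ and $h^0 = 1$ with $h^1 = 0$; moreover $W(2o - 2\tau) = E_{\tau}(2, \, 1) \otimes \mO_B(2o - 2\tau)$ has $h^0 = 1$, $h^1 = 0$ by Proposition \ref{cohomology-fibrati}$(i)$, since $\mO_B(2o - 2\tau) \in \textrm{Pic}^0(B)$. The only summand whose cohomology can jump is the line bundle $\mO_B(2o - 2\tau)$, so I would split the discussion accordingly. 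If $\mO_B(2o - 2\tau) \neq \mO_B$ one finds $h^0(G_1) = 1$, $h^1(G_1) = 0$, $h^0(G_2) = 10$, whence $h^0(\widetilde{A}_6) = 9$ exactly (the long exact sequence in cohomology gives $h^0(\widetilde{A}_6) = h^0(G_2) - h^0(G_1)$ because $h^1(G_1) = 0$). If instead $\mO_B(2o - 2\tau) = \mO_B$ one finds $h^0(G_1) = 2$, $h^1(G_1) = 1$, $h^0(G_2) = 11$, whence $h^0(\widetilde{A}_6) \leq 10$ by inequality \eqref{coker}. This is precisely the same dichotomy encountered for $\mathcal{M}_{\textrm{VIIa}}$.

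Finally I would assemble the dimension count, recalling from Proposition \ref{V2-indecomp} that $\xi$ varies in an at most $1$-dimensional family and that $V_1$ is rigid. In the generic case $\mO_B(2o - 2\tau) \neq \mO_B$ this gives $1$ parameter for $B$, at most $1$ for $\xi$, $1$ for $\tau$, and $h^0(\widetilde{A}_6) - 1 = 8$ from $\mathbb{P}H^0(\widetilde{A}_6)$, for a total of at most $11$. In the special case $\mO_B(2o - 2\tau) = \mO_B$ the point $\tau$ is constrained to finitely many values, hence contributes $0$, and we get $1$ for $B$, at most $1$ for $\xi$, $0$ for $\tau$, and at most $h^0(\widetilde{A}_6) - 1 = 9$ from $\mathbb{P}H^0(\widetilde{A}_6)$, again at most $11$. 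Therefore $\mathcal{M}_{\textrm{VIIb}}$ has dimension at most $11$. The only step requiring genuine care is the cohomology of $G_2$: one must invoke Proposition \ref{prop:vec-ell} correctly to decompose $\textrm{S}^2 W$ and $\textrm{S}^3 W$ and then identify the single summand $\mO_B(2o - 2\tau)$ responsible for the case distinction; everything else is a routine replica of the proofs for $\mathcal{M}_{\textrm{IIb}}$ and $\mathcal{M}_{\textrm{VIIa}}$.
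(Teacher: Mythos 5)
Your proof is correct, but it takes a genuinely different route from the paper's own proof of this proposition. The paper never makes your case distinction on $\mO_B(2o-2\tau)$: arguing as in \cite[Lemma 6.14]{CaPi06}, it observes that the second component of the map $i_3$ in the sequence obtained by twisting \eqref{A6} by $\mO_B(-4o-2\tau)$ is the identity on the summand $(W \oplus \mO_B)(2o-2\tau)$, so that sequence \emph{splits} and one gets the exact identification $\widetilde{A}_6 = (\textrm{S}^3 W \oplus \textrm{S}^2 W)(2o-2\tau) = \big(W \oplus W \oplus \bigoplus_{i=1}^3 L_i\big)(2o-\tau)$, all of whose summands have positive degree; hence $h^0(\widetilde{A}_6)=9$ for \emph{every} position of $\tau$, and the single count $1+1+1+8=11$ finishes the proof with no dichotomy. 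You avoid the splitting argument entirely and instead run the inequality \eqref{coker}, exactly as the paper itself does for $\mathcal{M}_{\textrm{VIIa}}$, where no such splitting is available. This costs you a case distinction, and in the special case $\mO_B(2o-2\tau)=\mO_B$ you obtain only $h^0(\widetilde{A}_6) \leq 10$ rather than the true value $9$; but that loss is exactly compensated by the rigidity of $\tau$ there ($\tau \ominus o$ must be a $2$-torsion point), so both of your cases close at $\leq 11$, and your cohomology bookkeeping (via Propositions \ref{cohomology-fibrati} and \ref{prop:vec-ell}) is accurate throughout. In short: the paper's route buys the precise structure of $\widetilde{A}_6$ and a uniform, shorter count; your route buys independence from the splitting lemma of \cite{CaPi06}, reducing everything to the elementary tools already used for $\mathcal{M}_{\textrm{VIIa}}$. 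Both correctly establish the bound of $11$.
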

\begin{proof}
In this case $\xi$ belongs to a family which is at most
$1$-dimensional. Set $ W =  E_{\tau} (2, \, 1)$; then $ V_2(-2o) =
W \oplus \mO_B$ and tensoring the exact sequence \eqref{A6} with
$\mO_B(-4o - 2 \tau)$ we obtain
\begin{equation} \label{sequence: X}
0 \lr \big( W \oplus  \mO_B \big)(2o-2 \tau) \stackrel{i_3} \lr
\big[\big(\textrm{S}^3 W \oplus \textrm{S}^2 W \big) \oplus ( W
\oplus \mO_B)\big]
 (2o- 2 \tau) \lr   \widetilde{A}_6 \lr 0.
\end{equation}
Arguing as in \cite[Lemma 6.14]{CaPi06}, we see that the second
component of the map $i_3$ is the identity, hence the exact
sequence \eqref{sequence: X} splits, giving
\begin{equation*}
\widetilde{ A}_6= (\textrm{S}^3 W \oplus \textrm{S}^2 W ) (2o-2
\tau).
\end{equation*}
By Proposition \ref{prop:vec-ell} this in turn implies
\begin{equation*}
\widetilde{ A}_6 = \bigg( W \oplus W \oplus \bigoplus_{i=1}^3  L_i
\bigg)(2o-\tau),
\end{equation*}
hence $h^0(\widetilde{ A}_6)=9$. We have $1$ parameter for $B$,
  at most $1$ parameter for $\xi$, $1$ parameter for
$\tau$ and $8$ parameters from $\mathbb{P}H^0( \widetilde{ A}_6)$.
Therefore $\mathcal{M}_{\textrm{VIIb}}$ has dimension at most
$11$.
\end{proof}

Summing up, we have the following

\begin{corollary}
The moduli space $\mathcal{M}$ of minimal surfaces of general type
with $p_g = 2$, $q = 1$ and $K^2 = 5$ is unirational and contains
at least $2$ irreducible components. Moreover, the dimension of
each irreducible component is either $12$ or $13$, and there is at
most one component of dimension $13$.
\end{corollary}
\begin{proof}
Notice first that $\mathcal{M}_{\textrm{V}, \textrm{gen}}$ is not
contained in the closure of $\mathcal{M}_{\textrm{V},2}$, since in
the former case $\tau$ is a general point, whereas in the latter
$\tau$ is a $2$-torsion point. So $\mathcal{M}$ contains at least
two irreducible components, namely $\mathcal{M}_{\textrm{V},2}$
and the component containing $\mathcal{M}_{\textrm{V},
\textrm{gen}}$. Moreover there is at most one component of
dimension $13$, namely $\mathcal{M}_{\textrm{I}}$.
\end{proof}

It would be desirable to exactly describe all irreducible
components of $\mathcal{M}$ and to understand how their closures
intersect, but we will not try to develop this point here.

\bigskip
\bigskip

Tommaso Gentile, Paolo A. Oliverio, Francesco Polizzi \\

Dipartimento di Matematica, Universit\`{a} della
Calabria, Cubo 30B, 87036 \\
Arcavacata di Rende (Cosenza), Italy\\

\emph{E-mail address:} \\ \verb|gentile@mat.unical.it| \\
\verb|oliverio@unical.it| \\ \verb|polizzi@mat.unical.it|

\end{document}